\theoremstyle{plain}
\newtheorem{theo}{Theorem}[section]
\newtheorem{lem}[theo]{Lemma}
\newtheorem{cor}[theo]{Corollary}
\newtheorem{prop}[theo]{Proposition}
\theoremstyle{remark}
\newtheorem{rem}[theo]{Remark}
\theoremstyle{definition}
\newtheorem{defi}{Definition}[section]
\newcommand{\asm}{\operatorname{ASM}}
\newcommand{\vhsasm}{\operatorname{VHSASM}}
\newcommand{\dasasm}{\operatorname{DASASM}}
\newcommand{\inw}{\operatorname{in}}
\newcommand{\out}{\operatorname{out}}
\renewcommand{\ast}{\operatorname{AST}}
\newcommand{\qast}{\operatorname{QAST}}
\newcommand{\dast}{\operatorname{DAST}}
\newcommand{\mast}{\operatorname{MAST}}
\newcommand{\pf}{\operatorname{Pf}}
\newcommand{\n}{\operatorname{N}}
\newcommand{\cspp}{\operatorname{CSPP}}
\newcommand{\da}{\nabla}
\newcommand{\inv}{\operatorname{inv}}
\newcommand{\w}{\operatorname{W}}
\renewcommand{\l}{\operatorname{L}}
\renewcommand{\r}{\operatorname{R}}
\newcommand{\schur}{s}
\renewcommand{\sp}{sp}
\newcommand{\so}{so}
\newcommand{\oeven}{{o}^\text{even}}
\newcommand{\dpp}{\operatorname{DPP}}
\newcommand{\oosasm}{\operatorname{OOSASM}}
\newcommand{\qoosasm}{\operatorname{OOSASM}}
\newcommand{\sgn}{\operatorname{sgn}}
\renewcommand{\c}{\operatorname{C}}
\newcommand{\one}{\raisebox{-1.5mm}{\scalebox{0.2}{\includegraphics{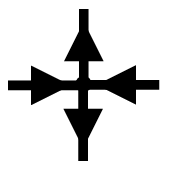}}}}
\newcommand{\mone}{\raisebox{-1.5mm}{\scalebox{0.2}{\includegraphics{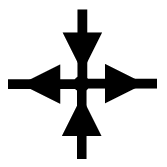}}}}
\newcommand{\nw}{\raisebox{-1.5mm}{\scalebox{0.2}{\includegraphics{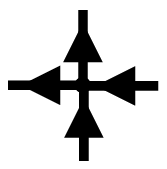}}}}
\renewcommand{\ne}{\raisebox{-1.5mm}{\scalebox{0.2}{\includegraphics{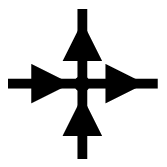}}}}
\newcommand{\sw}{\raisebox{-1.5mm}{\scalebox{0.2}{\includegraphics{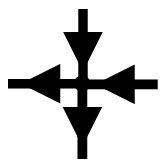}}}}
\newcommand{\se}{\raisebox{-1.5mm}{\scalebox{0.2}{\includegraphics{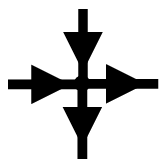}}}}
\newcommand{\lone}{\raisebox{0mm}{\scalebox{0.2}{\includegraphics{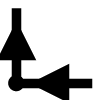}}}}
\newcommand{\rone}{\raisebox{0mm}{\scalebox{0.2}{\includegraphics{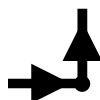}}}}
\newcommand{\lmone}{\raisebox{0mm}{\scalebox{0.2}{\includegraphics{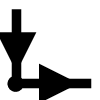}}}}
\newcommand{\rmone}{\raisebox{0mm}{\scalebox{0.2}{\includegraphics{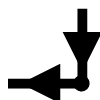}}}}
\newcommand{\lin}{\raisebox{0mm}{\scalebox{0.2}{\includegraphics{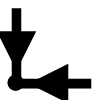}}}}
\newcommand{\lout}{\raisebox{0mm}{\scalebox{0.2}{\includegraphics{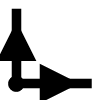}}}}
\newcommand{\rin}{\raisebox{0mm}{\scalebox{0.2}{\includegraphics{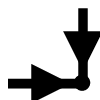}}}}
\newcommand{\rout}{\raisebox{0mm}{\scalebox{0.2}{\includegraphics{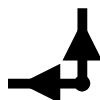}}}}
\newcommand{\bone}{\raisebox{0mm}{\scalebox{0.2}{\includegraphics{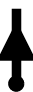}}}}
\newcommand{\bmone}{\raisebox{0mm}{\scalebox{0.2}{\includegraphics{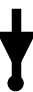}}}}
\newcommand{\wnes}{\raisebox{-3mm}{\scalebox{0.28}{
\psfrag{n}{\Huge $o_2$}
\psfrag{w}{\Huge $o_1$}
\psfrag{o}{\Huge $o_3$}
\psfrag{s}{\Huge $o_4$}
\includegraphics{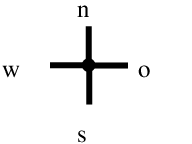}}}}
\newcommand{\x}{0.24}
\newcommand{\s}{0.9}
\newcommand{\wbulk}[4]{W(\raisebox{-0.35em}{\begin{tikzpicture}[
    decoration={
      markings,
      mark=at position 1 with {\arrow[scale=\s,black]{latex}};    }]
    \ifthenelse{\equal{#1}{1}}{\draw [postaction={decorate}]  (0,0) -- (\x,0);} {\draw [postaction={decorate}](\x,0) -- (0,0);}
    \ifthenelse{\equal{#2}{1}}{\draw [postaction={decorate}]  (\x,-\x) -- (\x,0);} {\draw [postaction={decorate}](\x,0) -- (\x,-\x);}
    \ifthenelse{\equal{#3}{1}}{\draw [postaction={decorate}]  (\x,0) -- (2*\x,0);} {\draw [postaction={decorate}](2*\x,0) -- (\x,0);}
    \ifthenelse{\equal{#4}{1}}{\draw [postaction={decorate}]  (\x,0) -- (\x,\x);} {\draw [postaction={decorate}](\x,\x) -- (\x,0);}
  \end{tikzpicture}}}
\newcommand{\wleft}[2]{W({\,\begin{tikzpicture}[
    decoration={
      markings,
      mark=at position 1 with {\arrow[scale=\s,black]{latex}};    }]
    \ifthenelse{\equal{#1}{1}}{\draw [postaction={decorate}]  (\x,0) -- (\x,\x);} {\draw [postaction={decorate}](\x,\x) -- (\x,0);}
    \ifthenelse{\equal{#2}{1}}{\draw [postaction={decorate}]  (\x,0) -- (2*\x,0);} {\draw [postaction={decorate}](2*\x,0) -- (\x,0);}
  \end{tikzpicture}}}
\newcommand{\wright}[2]{W({\begin{tikzpicture}[
    decoration={
      markings,
      mark=at position 1 with {\arrow[scale=\s,black]{latex}};    }]
    \ifthenelse{\equal{#1}{1}}{\draw [postaction={decorate}]  (0,0) -- (\x,0);} {\draw [postaction={decorate}](\x,0) -- (0,0);}
    \ifthenelse{\equal{#2}{1}}{\draw [postaction={decorate}]  (\x,0) -- (\x,\x);} {\draw [postaction={decorate}](\x,\x) -- (\x,0);}
  \end{tikzpicture}\,}}
\newcommand{\harrow}[1]{\begin{tikzpicture}[
    decoration={
      markings,
      mark=at position 1 with {\arrow[scale=\s,black]{latex}};    }]
    \ifthenelse{\equal{#1}{1}}{\draw [postaction={decorate}]  (0,0) -- (\x,0);} {\draw [postaction={decorate}](\x,0) -- (0,0);}
  \end{tikzpicture}}
\renewcommand{\w}{\operatorname{W}}
\newcommand{\wone}[1]{\w(\one,#1)}
\newcommand{\wmone}[1]{\w(\mone , #1)}
\newcommand{\wlone}[1]{\w(\lone, #1)}
\newcommand{\wlmone}[1]{\w(\lmone , #1)}
\newcommand{\wrone}[1]{\w(\rone , #1)}
\newcommand{\wrmone}[1]{\w(\rmone, #1)}
\newcommand{\wne}[1]{\w(\ne, #1)}
\newcommand{\wsw}[1]{\w(\sw, #1)}
\newcommand{\wlin}[1]{\w(\lin , #1)}
\newcommand{\wlout}[1]{\w(\lout, #1)}
\newcommand{\wnw}[1]{\w(\nw, #1)}
\newcommand{\wse}[1]{\w(\se, #1)}
\newcommand{\wrout}[1]{\w(\rout, #1)}
\newcommand{\wrin}[1]{\w(\rin, #1)}
\newcommand{\wwnes}[1]{\w(\wnes, #1)}
\numberwithin{equation}{section}
\newcommand{\ilse}[1]{{\color{black} #1}}
\begin{document}

\title[Extreme $\dasasm$s of odd order]{Extreme diagonally and antidiagonally symmetric alternating sign matrices of odd order}

\author[Arvind Ayyer]{Arvind Ayyer}
\address{Arvind Ayyer, Department of Mathematics, Indian Institute of Science, Bangalore - 560012, India}
\email{arvind@math.iisc.ernet.in}
\author[Roger E.~Behrend]{Roger E.~Behrend}
\address{Roger E.~Behrend, School of Mathematics, Cardiff University, Cardiff, CF24 4AG, UK}
\email{behrendr@cardiff.ac.uk}
\author[Ilse Fischer]{Ilse Fischer}
\address{Ilse Fischer, Fakult\"{a}t f\"{u}r Mathematik, Universit\"{a}t Wien, Oskar-Morgenstern-Platz 1, 1090 Wien, Austria}
\email{ilse.fischer@univie.ac.at}

\subjclass[2000]{05A05, 05A15, 05A19, 15B35, 82B20, 82B23}

\keywords{alternating sign matrices, symmetry classes of ASMs, triangular six-vertex configurations, reflection equation}

\begin{abstract}
For each $\alpha \in \{0,1,-1 \}$, we count diagonally and antidiagonally symmetric alternating sign matrices ($\dasasm$s) of fixed odd order with a maximal number of $\alpha$'s along the diagonal and the antidiagonal, as well as $\dasasm$s of fixed odd order with a minimal number of $0$'s along the diagonal and the antidiagonal. In these enumerations, we encounter product formulas that have previously appeared in plane partition or alternating sign matrix counting, namely for the number of all alternating sign matrices, the number of cyclically symmetric plane partitions in a given box, and the number of vertically and horizontally symmetric $\asm$s. We also prove several refinements. For instance, in the case of $\dasasm$s with a maximal number of $-1$'s along the diagonal and the antidiagonal, these considerations lead naturally to the definition of alternating sign triangles. These are new objects that are equinumerous with $\asm$s, and we are able to prove a two parameter refinement of this fact, involving the number of $-1$'s and the inversion number on the $\asm$ side. To prove our results, we extend techniques to deal with triangular six-vertex configurations that have recently successfully been applied to settle Robbins' conjecture on the number of all $\dasasm$s of odd order. Importantly, we use a general solution of the reflection equation to prove the symmetry of the partition function in the spectral parameters. In all of our cases, we derive determinant or Pfaffian formulas for the partition functions, which we then specialize in order to obtain the product formulas for the various classes of extreme odd $\dasasm$s under consideration.
\end{abstract}

\maketitle

\section{Introduction}

An \emph{alternating sign matrix} ($\asm$) is a square matrix with entries $0$, $1$ or $-1$ such that along each row and each column the non-zero entries alternate and add up to $1$. An example is given next.
\begin{equation}
\label{dasasm-ex}
\left( \begin{matrix}
\color{red} 0 & \color{red} 0 & \color{red} 1 & \color{red} 0 & \color{red} 0 & \color{red} 0 & \color{red} 0 \\
0 & \color{red} 1 & \color{red} -1 & \color{red} 0 & \color{red} 1 & \color{red} 0 & 0 \\
1 & -1 & \color{red} 0 & \color{red} 1 & \color{red} -1 & 1 & 0 \\
0 & 0 & 1 & \color{red} -1 & 1 & 0 & 0 \\
0 & 1 & -1 & 1 & 0 & -1 & 1 \\
0 & 0 & 1 & 0 & -1 & 1 & 0 \\
0 & 0 & 0 & 0 & 1 & 0 & 0
\end{matrix} \right)
\end{equation}
(The coloring of the entries is explained shortly.)
The story of $\asm$s began in the \ilse{early} 1980's when Mills, Robbins and Rumsey \cite{MilRobRum82,DPPMRR} defined them in the course of generalizing the determinant and conjectured that the number of $n \times n$ $\asm$s is given by the following simple product formula.
\begin{equation}
\label{prod}
\prod_{i=0}^{n-1} \frac{(3i+1)!}{(n+i)!}
\end{equation}
It was more than ten years later when Zeilberger \cite{ZeilbergerASMProof} finally succeeded in providing the first proof of this formula in an $84$ page paper. Kuperberg \cite{KuperbergASMProof} then used six-vertex model techniques to give a shorter proof.

As early as the 1980's, as discussed by Robbins \cite[p.18, p. 2]{Rob91,RobbinsSymmClasses}, Stanley suggested systematically studying \emph{symmetry classes of $\asm$s}, which led Robbins to numerous conjectures, see \cite{RobbinsSymmClasses}. In particular, several symmetry classes of $\asm$s were conjectured to be enumerated by beautiful product formulas similar to \eqref{prod}. The program of proving these product formulas was recently completed in \cite{DASASM}, in which $\asm$s of odd order that are invariant under the reflections in the diagonal and in the antidiagonal, usually referred to as \emph{diagonally and antidiagonally symmetric $\asm$s} ($\dasasm$s), were enumerated. The example in \eqref{dasasm-ex} belongs to this symmetry class.  About half of Robbins' other conjectured product formulas for symmetry classes of $\asm$s were proven by Kuperberg in \cite{KuperbergRoof}, namely those for \emph{vertically symmetric $\asm$s}, \emph{half-turn symmetric $\asm$s} of even order and \emph{quarter-turn symmetric $\asm$s} of even order. Razumov and Stroganov proved the odd order cases for half-turn symmetric $\asm$s \cite{RazStroHTSymm} and for quarter-turn symmetric $\asm$s \cite{RazStr06b}, and Okada enumerated \emph{vertically and horizontally symmetric $\asm$s} \cite{OkadaCharacters}.

In addition to symmetry classes of $\asm$s, various closely-related classes of ASMs have also been studied.  Of relevance for this paper are
\emph{off-diagonally symmetric $\asm$s} (OSASMs), and \emph{off-diagonally and off-antidiagonally symmetric $\asm$s} (OOSASMs), as
introduced by Kuperberg~\cite{KuperbergRoof}.
OSASMs are even order diagonally symmetric ASMs in which each entry on the diagonal is~$0$,
while OOSASMs are DASASMs of order $4n$ in which each entry on the diagonal and antidiagonal is~$0$.
A product formula for the number of OSASMs (which is identical to that for odd order vertically symmetric ASMs)
was obtained by Kuperberg~\cite{KuperbergRoof}, but no formula for the enumeration of OOSASMs is currently known. For further information regarding symmetry classes and related classes of ASMs, see, for example,~\cite[Secs.~1.2--1.3]{DASASM},
and references therein.

The focus of the current paper is the study of odd order $\dasasm$s with a certain extreme behavior along the union of the diagonal and the antidiagonal.
Observe that a $\dasasm$ $(a_{i,j})_{1 \le i, j \le 2n+1 }$ of order $2n+1$ is determined by its entries in the \emph{fundamental triangle} $\{(i,j) | 1 \le i \le n+1, i \le j \le 2n+2-i\}$---in the example \eqref{dasasm-ex}  marked with red.
For a given odd-order $\dasasm$ $A$ and $\alpha \in \{0,1,-1\}$, let
$$\n_{\alpha}(A) = \#  \text{\, of $\alpha$'s along the portions of the diagonals of $A$ that lie in the fundamental triangle},$$
where here and in the following ``diagonals'' refers to the union of the (main) diagonal and the (main) antidiagonal. In the example \eqref{dasasm-ex}, $\n_{-1}(A)=2$, $\n_{1}(A)=1$ and $\n_{0}(A)=4$.
We have the following bounds for these statistics. (The proof of the proposition is provided in Subsection~\ref{prop}.)

\begin{prop}
\label{bounds}
For any $(2n+1) \times (2n+1)$ $\dasasm$ $A$, the statistics $\n_{\alpha}(A)$ lie in the following intervals.
\begin{enumerate}
\item $0 \le \n_{-1}(A) \le n$
\item $0 \le \n_{1}(A) \le n+1$
\item $n \le \n_{0}(A) \le 2n$
\end{enumerate}
All inequalities are sharp.
\end{prop}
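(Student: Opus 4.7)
The plan is to handle each bound separately using the corner-sum matrix $r_{i,j}=\sum_{i'\le i,\,j'\le j}a_{i',j'}$; the ASM property gives $r_{i,j}-r_{i-1,j},\,r_{i,j}-r_{i,j-1}\in\{0,1\}$, diagonal symmetry gives $r_{i,j}=r_{j,i}$, and antidiagonal symmetry translates to $r_{i,j}=i+j-(2n+1)+r_{2n+1-i,\,2n+1-j}$. I introduce two Motzkin-like paths of length $2n+1$, both running from $0$ to $0$ with steps in $\{-1,0,+1\}$, non-positive, and symmetric under $i\mapsto 2n+1-i$: the diagonal path $\tilde s_i=r_{i,i}-i$ and the antidiagonal path $\tilde U_i=-r_{i,\,2n+1-i}$. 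A flat step of $\tilde s$ at position $i$ corresponds exactly to $a_{i,i}\ne 0$, and a flat step of $\tilde U$ at $i$ to $a_{i,\,2n+2-i}\ne 0$.

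First I would establish $\n_0(A)\le 2n$ by showing the center entry $a_{n+1,n+1}$ is always non-zero: antidiagonal symmetry at $i=j=n+1$ yields $r_{n+1,n+1}-r_{n,n}=1$, so writing $a_{n+1,n+1}=(r_{n+1,n+1}-r_{n,n+1})-(r_{n,n+1}-r_{n,n})$ the two parenthesized quantities lie in $\{0,1\}$ and sum to $1$, forcing one to be $0$ and the other $1$, so $a_{n+1,n+1}=\pm 1$.

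The key inequality is $\n_0(A)\ge n$, equivalently $\n_{-1}(A)+\n_1(A)\le n+1$. For any path from $0$ to $L\le 0$ in $n$ steps with increments in $\{-1,0,+1\}$, the number of flat steps is at most $n+L=n-|L|$, applied to both $\tilde s$ and $\tilde U$ on $[0,n]$. The identity $r_{n,n}+\rho_{n,n+2}=n-C^{n+1}_n$ (where $\rho_{i,j}=i-r_{i,j-1}$ and $C^{n+1}_n\in\{0,1\}$ is the column-$(n+1)$ partial sum at row $n$, obtained by splitting row sums of the first $n$ rows across the middle column) translates to $\tilde s_n+\tilde U_n=-n-C^{n+1}_n$; since both summands are non-positive, $|\tilde s_n|+|\tilde U_n|\ge n$. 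Hence the total number of flat steps of $\tilde s$ and $\tilde U$ in $[1,n]$ is at most $2n-(|\tilde s_n|+|\tilde U_n|)\le n$, and adding the flat step at $i=n+1$ (present in both paths, counted once as the center entry) yields $\n_{-1}(A)+\n_1(A)\le n+1$.

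For the individual bounds $\n_{-1}(A)\le n$ and $\n_1(A)\le n+1$ I refine the path analysis by classifying each flat step as \emph{high} ($a=+1$, where $(\lambda_i,\mu_i)=(0,1)$) or \emph{low} ($a=-1$, where $(\lambda_i,\mu_i)=(1,0)$), with $\lambda_i$ and $\mu_i$ the column $i$ partial sums at rows $i-1$ and $i$; the row-$1$ constraint that the top row of any ASM has no $-1$ and a unique $+1$, together with a careful traversal along both paths, yields the two bounds. Sharpness is attained by explicit extremal DASASMs: the identity matrix achieves $\n_1=n+1$ and $\n_0=n$ simultaneously; a DASASM with $-1$'s distributed along the antidiagonal achieves $\n_{-1}=n$; and a DASASM whose unique $+1$ in row $1$ lies strictly between the corners, propagated by the two symmetries and the forced ASM entries, realizes $\n_0=2n$ with only the center entry non-zero on the fundamental-triangle diagonals. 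The main obstacle is the coupled Motzkin-path double count for bound~(c) and the subsequent flat-type refinement needed to split the count into $\n_{-1}$ and $\n_1$ contributions.
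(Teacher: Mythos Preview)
Your approach via the corner-sum matrix and the two Motzkin-type paths $\tilde s_i=r_{i,i}-i$ and $\tilde U_i=-r_{i,2n+1-i}$ is genuinely different from the paper's. The paper works in the six-vertex model: it uses the identity (sum of outdegrees) $=$ (sum of indegrees) on the triangular graph to obtain
\[
2\n_{0,\text{out}}(A)+a_{n+1,n+1}=2\n_{0,\text{in}}(A)+2n+1,
\]
and then reads off each bound algebraically from this together with $\n_1+\n_{-1}+\n_{0,\text{in}}+\n_{0,\text{out}}=2n+1$. Your path-counting argument reaches the \emph{same} key inequality in disguise: from $\tilde s_n+\tilde U_n=-(n+C^{n+1}_n)$ with $C^{n+1}_n=\tfrac{1-a_{n+1,n+1}}{2}$ you get $\n_1+\n_{-1}\le n+1-C^{n+1}_n$, i.e.\ $\n_0\ge n+\tfrac{1-a_{n+1,n+1}}{2}$, which is exactly what the paper's degree identity gives. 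Each method has its merits: the six-vertex argument is shorter and immediately identifies the equality cases in terms of the boundary configurations $\n_{0,\text{in}}=0$; your corner-sum argument avoids the six-vertex translation entirely and is self-contained in matrix language.

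There is, however, a genuine gap. Your derivation of the remaining bound $\n_{-1}\le n$ is only sketched (``classifying each flat step as high or low \ldots\ together with a careful traversal \ldots\ yields the two bounds''), and the proposed high/low mechanism is more machinery than you need. A clean completion from what you already have: you showed $a_{n+1,n+1}=1-2C^{n+1}_n$. If $a_{n+1,n+1}=1$ then the centre contributes to $\n_1$, so $\n_1\ge1$ and hence $\n_{-1}\le(\n_1+\n_{-1})-1\le n$. If $a_{n+1,n+1}=-1$ then $C^{n+1}_n=1$, so your own inequality sharpens to $\n_1+\n_{-1}\le n$, and again $\n_{-1}\le n$. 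The bound $\n_1\le n+1$ is immediate from $\n_1+\n_{-1}\le n+1$. Finally, your sharpness witnesses are only gestured at; you should exhibit explicit DASASMs for $\n_{-1}=n$ and for $\n_0=2n$ (the paper gives concrete constructions for both), rather than asserting their existence.
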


The research presented in this paper started out with numerical data providing evidence that for four out of the six inequalities in the proposition we have the following phenomenon: The number of $\dasasm$s where equality is attained is round and in fact equal to numbers that have previously appeared in plane partition or alternating sign matrix counting. \ilse{(For the two other inequalities, the numbers are not even round.)} It is the primary objective of this paper to prove all these empirical observations. A number of generalizations including determinant or Pfaffian formulas for certain generating functions that are known as \emph{partition functions} in a physics
context are also provided. Next we state the main results.

\subsection{Case $\n_{-1}(A)=n$:} Order $2n+1$ $\dasasm$s $A$ with $\n_{-1}(A)=n$ are proven to be equinumerous with $n \times n$ $\asm$s. It is remarkable that this now establishes a new class of objects with this property.
The \ilse{two} other currently-known classes are \emph{totally symmetric self-complementary plane partitions} in an $2n \times 2n \times 2n$ box (which were introduced by Stanley \cite{Sta86a} and enumerated by Andrews \cite{And94}, and for which the equinumeracy with $\asm$s was first conjectured by Mills, Robbins and Rumsey
\cite{MilRobRum86}) and \emph{descending plane partitions} ($\dpp$s) with parts no greater than $n$ (which were introduced and enumerated by Andrews \cite{AndrewsMacdonald}, and for which equinumeracy with $\asm$s was first conjectured by Mills, Robbins and Rumsey \cite{MilRobRum82,DPPMRR}).

We are also able to identify statistics that have the same distribution:
For an $\asm$ $A$, let
$$\mu(A)= \# \text{\, of $-1$'s in $A$},$$
and, for an order $2n+1$ $\dasasm$
$A$, let
$$\mu_{\da}(A) = (\#  \text{\, of $-1$'s in the fundamental triangle of $A$})-n.
$$
\ilse{We will consider $\mu_{\da}(A)$ only for $A \in \dasasm(2n+1)$ with
$\n_{-1}(A)=n$, in which case it is just the number of $-1$'s in the interior of the fundamental triangle.} Throughout the paper, the sets of order $n$ $\asm$s and
order $n$ $\dasasm$s are denoted by $\asm(n)$ and $\dasasm(n)$, respectively.

\begin{theo}
\label{maxmone}
The distribution of the statistic $\mu$  on the set $\asm(n)$ is equal to the distribution of the statistic $\mu_{\da}$ on the set of $A \in \dasasm(2n+1)$ with $\n_{-1}(A)=n$, i.e., for all non-negative integers $m,n$  we have
$$
|\{A \in \asm(n) \mid \mu(A)=m\}| = |\{A \in \dasasm(2n+1) \mid \n_{-1}(A)=n, \, \mu_{\da}(A)=m \}|.
$$
\end{theo}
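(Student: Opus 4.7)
The plan is to adapt the six-vertex model approach that was used in \cite{DASASM} for the enumeration of all $\dasasm(2n+1)$. First, I would translate order $2n+1$ $\dasasm$s $A$ with $\n_{-1}(A)=n$ into six-vertex configurations on the fundamental triangle, with the usual frozen outer boundary and with specific boundary conditions along the diagonal side. The condition $\n_{-1}(A)=n$, which by Proposition~\ref{bounds} is the maximum, should force a rigid pattern of $-1$'s and $1$'s on the diagonal of $A$, so that the local six-vertex states along the diagonal boundary are essentially frozen and contribute only explicit weight factors. The proof of Proposition~\ref{bounds} should make the exact form of this frozen boundary transparent.

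Second, I would introduce a partition function $Z_n(x_1,\dots,x_n;t)$ over these reduced configurations, where $x_1,\dots,x_n$ are spectral parameters assigned to the rows of the fundamental triangle and $t$ is an additional variable tracking $\mu_{\da}$ through a factor attached to each $-1$ vertex strictly inside the triangle. Using the Yang-Baxter equation for the bulk weights, the general reflection equation solution mentioned in the abstract at the antidiagonal boundary, and the frozen weights at the diagonal boundary, one expects $Z_n$ to be symmetric in the $x_i$. Together with polynomial-degree bounds in each $x_i$ and evaluations at finitely many specializations, the Izergin--Korepin argument should then yield a closed-form determinant (or, in view of the reflection boundary, Pfaffian) formula for $Z_n$.

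The third step is to confront this with the analogous partition function for $\asm(n)$ equipped with weight $t^{\mu(A)}$, which in the Izergin--Korepin formulation is a well-known determinant. Specializing both partition functions at a common choice of spectral parameters $x_i$ that makes the bulk weight reduce to $1$ on each vertex other than an internal $-1$, and showing that the resulting two polynomials in $t$ coincide, would establish the equidistribution $|\{A\in\asm(n)\mid \mu(A)=m\}|=|\{A\in\dasasm(2n+1)\mid \n_{-1}(A)=n,\,\mu_{\da}(A)=m\}|$ coefficient by coefficient, which is exactly the statement of the theorem.

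The main obstacle is likely the second step: one needs to choose bulk, corner and boundary weights in which the $t$-dependence is genuinely compatible with both the Yang-Baxter equation and the reflection equation, so that the symmetry of $Z_n$ in the spectral parameters actually holds and the interpolation argument can be run. A secondary obstacle is the final comparison, since the $\asm(n)$ determinant and the $\dasasm(2n+1)$ (Pfaffian) formula have quite different sizes and the variables enter asymmetrically; extracting the common one-variable polynomial in $t$ may require an auxiliary identity or an inductive argument on $n$ rather than a direct side-by-side manipulation.
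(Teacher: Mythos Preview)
Your overall strategy matches the paper's, but one key step is wrong as stated, and your anticipated difficulty at the end does not arise.

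The diagonal and antidiagonal boundaries are \emph{not} frozen. The condition $\n_{-1}(A)=n$ forces (see the proof of Proposition~\ref{bounds}) every left boundary vertex to be of type $\lmone$ or $\lout$ and every right boundary vertex to be of type $\rmone$ or $\rout$, but which of the two occurs at each position varies from configuration to configuration. The paper therefore does not strip off the boundary; it works with the full four-parameter family of reflection-compatible boundary weights from Theorem~\ref{re} and then passes to the ``$\ast$ specialization'' in which $\wlin{u}=\wlone{u}=\wrin{u}=\wrone{u}=0$. This kills precisely the unwanted boundary configurations while keeping the reflection equation valid, so the symmetry of $Z_n$ in $u_1,\dots,u_n$ still follows from Yang--Baxter plus both (left and right) reflection equations. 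The two boundaries are treated symmetrically; there is no ``frozen diagonal, reflecting antidiagonal'' split.

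Your expected Pfaffian and your secondary obstacle do not materialize. The $\ast$ partition function is an $(n{+}1)\times(n{+}1)$ determinant (Theorem~\ref{astpartfunct}); specializing $u_{n+1}=p$ collapses it to an $n\times n$ determinant which is, up to an explicit product prefactor, literally the Izergin--Korepin determinant $Y_n(u_1,\dots,u_n;\bar u_1,\dots,\bar u_n)$ for $\asm(n)$ (equation~\eqref{ast-asm}). Setting all $u_i=p$ then turns both sides into the same generating function in the algebraically independent quantities $\sigma(q^2p^2)/\sigma(q^4)$ and $\sigma(q^2\bar p^2)/\sigma(q^4)$; these track $\inv_{\da},\inv'_{\da}$ on the $\ast$ side and $\inv,\inv'$ on the $\asm$ side, and since $\mu_{\da}=\binom{n}{2}-\inv_{\da}-\inv'_{\da}$ (and likewise for $\mu$), the equidistribution follows at once---in fact jointly with the inversion number, which is Theorem~\ref{maxmone-gen}. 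So the final comparison is a single determinant identity of matching size, not an inductive or side-by-side manipulation.
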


A corresponding result---with  the set of $A \in \dasasm(2n+1)$ such that $\n_{-1}(A)=n$ replaced by the set of $\dpp$s with parts no greater than $n$ and $\mu_{\da}$ replaced by the number of special parts in the $\dpp$---was conjectured by
Mills, Robbins and Rumsey \cite{DPPMRR} and proven by Behrend, Di Francesco and Zinn-Justin \cite{DPP1Behrend}. In fact, they have proven a refinement (also conjectured by Mills, Robbins and Rumsey) that involves two additional statistics. On the $\asm$ side, these are the {\it inversion number} and the {\it position of the unique $1$ in the top row}. (This was further generalized in \cite{DPP2Behrend}, where they also included the position of the unique $1$ in the bottom row.) In Section~\ref{ast}, we define an inversion number $\inv_{\da}$ on the set of $A \in  \dasasm(2n+1)$ with $\n_{-1}(A)=n$ such that the joint distribution of $\mu$ and $\inv$ on $\asm(n)$ is equal to the joint distribution of $\mu_{\da}$ and $\inv_{\da}$ on this subset of $\dasasm(2n+1)$ (Theorem~\ref{maxmone-gen}), thus establishing a refinement of Theorem~\ref{maxmone}. Other generalizations of Theorem~\ref{maxmone} in terms of partition functions for certain triangular six-vertex configurations are provided in Theorems~\ref{astpartfunct} and \ref{astschurtheo}, and in Corollary~\ref{astpartfunct1}.

\subsection{Case $\n_{1}(A)=n+1$:} In the second theorem, {\it cyclically symmetric plane partitions} ($\cspp$s) make an appearance. They were enumerated by Andrews \cite{AndrewsMacdonald}.

\begin{theo}
\label{maxone}
The number of $A \in \dasasm(2n+1)$ with $\n_1(A)=n+1$ is equal to the number of $\cspp$s in an $n \times n \times n$ box.
\end{theo}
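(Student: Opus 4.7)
The plan is to extend to this extremal setting the six-vertex model framework that was developed in \cite{DASASM} to enumerate all odd-order $\dasasm$s. Recall that in that framework $\dasasm$s of order $2n+1$ are in bijection with six-vertex configurations on a triangular lattice region with domain-wall boundary on the hypotenuse and reflecting conditions on the two legs corresponding to the diagonal and antidiagonal; assigning spectral parameters $x_1,\dots,x_{n+1}$ to the horizontal lines and weighting configurations by the product of local Boltzmann weights yields a partition function $Z_{2n+1}(x_1,\dots,x_{n+1})$ whose value at the combinatorial point enumerates all $\dasasm$s.

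The first step is to translate the condition $\n_1(A)=n+1$, which by Proposition~\ref{bounds} is the maximal one, into a local constraint on the corresponding six-vertex configurations. The presence of a $1$ at every available diagonal cell in the fundamental triangle, combined with the alternating-sign rule along the rows and columns through each such cell and with the symmetry across the diagonal and antidiagonal, pins the vertex type at each of the $2n+1$ diagonal lattice sites of the triangle to a single prescribed type. Let $Z^{(1)}_{n}(x_1,\dots,x_{n+1})$ denote the resulting restricted partition function, obtained by freezing these vertex types and summing over the remaining "bulk" configurations.

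The second step is to show, using the Yang-Baxter equation together with the general solution of the reflection equation highlighted in the abstract, that $Z^{(1)}_{n}$ is a symmetric polynomial in $x_1,\dots,x_{n+1}$ of controlled degree. One then follows the Izergin-Korepin-Kuperberg strategy: identify a sufficient number of vanishing or factorizing specializations, derive a recursion relating $Z^{(1)}_{n}$ to $Z^{(1)}_{n-1}$ by sending one of the $x_i$ to a distinguished value that freezes an outermost row, and conclude from uniqueness a determinant or Pfaffian closed form, in line with the formulas the paper already obtains for its other extremal cases. Specializing the $x_i$ to the combinatorial point then performs the plain enumeration.

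The main obstacle is the final recognition: the resulting determinant or Pfaffian must be shown to equal Andrews' product formula for $\cspp$s in an $n\times n\times n$ box. One promising route is to manipulate the determinant to match the one arising from the Lindstr\"om-Gessel-Viennot representation of $\cspp$s by families of non-intersecting lattice paths, or to identify it with a Schur-function specialization with a known product evaluation (for example via a continuous Hahn polynomial argument, as in Ciucu-Krattenthaler's reproof of Andrews' formula). A secondary difficulty is the careful combinatorial bookkeeping at the center $(n+1,n+1)$ and at the corners where the diagonal legs meet the hypotenuse, where one must verify that the extremal condition genuinely freezes the vertex type and does not leave residual local choices, so that $Z^{(1)}_{n}$ is truly a single partition function rather than a sum of several.
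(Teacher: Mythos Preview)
Your broad strategy---six-vertex model on a triangle, restricted partition function, symmetry via Yang--Baxter and reflection equations, determinant formula via recursion, specialization---does align with the paper's approach. However, your first step contains a concrete error, and your final step is missing the specific identification that makes the proof go through.

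On the first step: the condition $\n_1(A)=n+1$ does \emph{not} mean that every diagonal cell in the fundamental triangle is a $1$, and it does \emph{not} pin each boundary vertex to a single type. There are $2n+1$ such cells, and in the extremal case one has $\n_1=n+1$, $\n_{-1}=0$, $\n_0=n$; so $n$ of the boundary vertices still correspond to $0$'s. The correct translation (Corollary~\ref{char}(\ref{char-2}) in the paper) is that in each row the leftmost and rightmost vertical edges point up, which forbids two of the four local boundary types on each side while still allowing two (namely $\lone,\lout$ on the left and $\rone,\rout$ on the right). This is implemented not by freezing vertices but by specializing the boundary parameters so that the unwanted types get weight zero (the $\qast$ specialization). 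One must in addition restrict to the refinement $Z^{\uparrow}_n$ with the bottom edge pointing up, since $a_{n+1,n+1}=1$; your outline omits this. Your worry about ``residual local choices'' is thus well founded, but the resolution is precisely that one works with the universal partition function with suitable boundary weights, not with a partition function over a single frozen boundary pattern.

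On the recognition step: the paper does not proceed via LGV or continuous Hahn polynomials. After specializing $u_{n+1}=p$, the resulting $n\times n$ determinant is exactly one of the two factors of Kuperberg's partition function for half-turn symmetric $\asm$s, for which Okada (and independently Razumov--Stroganov) already proved that the $q=e^{i\pi/6}$ specialization is the Schur function $s_{(n,n-1,n-1,\ldots,1,1)}(u_1^2,\bar u_1^2,\ldots,u_n^2,\bar u_n^2)$ up to a power of $3$. Setting all $u_i=1$ and applying the standard product formula for $s_\lambda(1,\ldots,1)$ then yields Andrews' $\cspp$ count directly. Without this identification your outline remains incomplete at the decisive point.
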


The number of $\cspp$s has previously appeared in the program of enumerating symmetry classes of $\asm$s: Robbins conjectured \cite{RobbinsSymmClasses} and Kuperberg~\cite[Theorem 2]{KuperbergRoof} proved that the number of $2n \times 2n$ half-turn symmetric alternating sign matrices is the product of the total number of $n \times n$ $\asm$s and the number of $\cspp$s in an $n \times n \times n$ box.

Generalizations of Theorem~\ref{maxone} in terms of partition functions  are
provided in Theorems~\ref{dastpartfunct} and \ref{dastschurtheo}, and in Corollary~\ref{dastpartfunct1}.

\subsection{Case $\n_{0}(A)=n$:} For the lower bound of $\n_0$, we again have the total number of $\asm$s turning up. This is one of the few exceptional cases in this field that can easily be proven by establishing a bijection with other objects that are known to be enumerated by these numbers, in this particular case  \ilse{with $A \in \dasasm(2n+3)$ satisfying $\n_{-1}(A)=n+1$ (which also appear in Theorem~\ref{maxmone} and are shown to be equinumerous with $\asm$s of order $n+1$).
This bijection is provided in Subsection~\ref{imply}.}
As it is always the situation in such a case in this area so far,
the bijection is almost trivial, which is the reason why the following result can be viewed as a corollary of Theorem~\ref{maxmone}.

\begin{cor}
\label{minzero}
The number of $A \in \dasasm(2n+1)$ with $\n_0(A)=n$ is equal to the number of
$\asm$s of order $n+1$.
\end{cor}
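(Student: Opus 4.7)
The plan is to deduce this corollary from Theorem~\ref{maxmone} via an ``almost trivial'' bijection, as announced in the paragraph preceding the statement.

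First, replacing $n$ by $n+1$ in Theorem~\ref{maxmone} and summing the refined identity over $m$ gives
\[
\bigl|\{A\in\dasasm(2n+3)\mid \n_{-1}(A)=n+1\}\bigr|=|\asm(n+1)|,
\]
so it is enough to construct a bijection
\[
\Phi:\{A\in\dasasm(2n+1)\mid \n_0(A)=n\}\longrightarrow\{A\in\dasasm(2n+3)\mid \n_{-1}(A)=n+1\}.
\]

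For the construction of $\Phi$, I would extend $A$ to a DASASM of order $2n+3$ by adjoining an outer row and column on each of the four sides, filling the new entries in the unique way dictated by the ASM row- and column-sum conditions together with the DASASM symmetries. Since $\n_0(A)=n$ attains the minimum in Proposition~\ref{bounds}, the diagonal entries of $A$ in the fundamental triangle satisfy $\n_{-1}(A)+\n_1(A)=n+1$; the extension is arranged so that each of these $n+1$ nonzero diagonal entries contributes, after a shift, a $-1$ to the diagonals of the fundamental triangle of $\Phi(A)$, while the two new corner positions become $0$. This yields $\n_{-1}(\Phi(A))=n+1$, as required. The inverse of $\Phi$ is the corresponding contraction, which strips the outer border of an order-$(2n+3)$ DASASM and restores the original sign pattern on the diagonals.

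The main obstacle is defining the extension rigorously and verifying that it gives a bijection onto the target set. Because the first row of $A$ need not be palindromic and its unique $+1$ can be placed at various positions, the border of $\Phi(A)$ depends on $A$ in a case-sensitive manner; however, the ASM and DASASM constraints uniquely determine the border, and the remaining verification of the ASM alternation and sum conditions along the new boundary reduces to a local check that matches the shifted nonzero diagonals of $A$ with the $-1$'s in the diagonals of $\Phi(A)$.
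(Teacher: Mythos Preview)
Your construction does not work. If you embed the $(2n+1)\times(2n+1)$ DASASM $A$ as the central block of a $(2n+3)\times(2n+3)$ matrix $A'$ via $(i,j)\mapsto(i+1,j+1)$, then every inner row of $A'$ already sums to~$1$ across the old block, and the alternation condition forces the two new border entries in that row to be~$0$ (a nonzero leftmost entry would have to be $+1$, contradicting that the first nonzero of the old row is $+1$). The same holds for columns. Hence the entire border of $A'$ is zero except at the four corners, and for those there are \emph{two} valid completions, not one. More importantly, the shift $(i,j)\mapsto(i+1,j+1)$ takes the diagonal of $A$ to the diagonal of $A'$ with entries unchanged, so $\n_{-1}(A')=\n_{-1}(A)\le n$, never $n+1$. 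Your claim that ``each of these $n+1$ nonzero diagonal entries contributes, after a shift, a $-1$ to the diagonals'' is simply false: a $+1$ on the diagonal stays $+1$.

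The paper's bijection is not a border extension of the square matrix but rather of the \emph{fundamental triangle} (equivalently, of the triangular six-vertex configuration). One adds a new left and right boundary vertex in each row of the triangle and a new bottom vertex, with the new horizontal edges directed inward; the old boundary vertices then become bulk vertices, and the orientations of the new vertical edges are uniquely determined by the balance condition. In matrix language this does \emph{not} shift the old diagonals to the new diagonals; instead the old diagonal entries become off-diagonal bulk entries, and the new diagonal entries are fresh. Equivalently (see the remark at the end of Subsection~\ref{astintroduce}), the odd DASASM-triangle of order $n$ with $\n_0=n$ has exactly the same shape as an $\ast$ of order $n+1$, and the map is just ``replace every $-1$ on the left and right boundary by $0$''; the resulting $\ast$ then corresponds to an order-$(2n+3)$ DASASM with $\n_{-1}=n+1$ via the standard $\ast$--DASASM dictionary. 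If you want to salvage your approach, you should abandon the full-matrix border and work with the fundamental triangle instead.
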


\subsection{Case $\n_{0}(A)=2n$:}
In the case of the upper bound of $\n_0$, the number of {\it vertically and horizontally symmetric alternating sign matrices} ($\vhsasm$s) appears.

\begin{theo}
\label{maxzero}
The number of $A \in \dasasm(2n+1)$ with $\n_0(A)=2n$ is equal to the number
of order $2n+3$ $\vhsasm$s.
\end{theo}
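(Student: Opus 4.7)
The plan is to follow the general strategy outlined in the introduction: attach spectral parameters to a partition function that generates the restricted class of odd-order $\dasasm$s under consideration, prove that this partition function admits a determinant or Pfaffian formula by exploiting the Yang--Baxter equation and the reflection equation, specialise to the combinatorial point to obtain a product formula, and finally match the result with Okada's enumeration of $\vhsasm$s.

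First, I would set up the partition function. The constraint $\n_0(A)=2n$ forces exactly one of the $2n+1$ entries of $A$ along the union of the diagonal and antidiagonal inside the fundamental triangle to be non-zero, and by the alternating-sign property combined with $\dasasm$ symmetry this entry must equal $+1$. Translated into the triangular six-vertex model, this means that the vertex types along the diagonal boundary of the triangle are almost entirely frozen, with a single free position that is summed over. I would therefore define a partition function $Z_n(x_1,\ldots,x_{n+1})$ with these frozen-up-to-one-position boundary data, attaching spectral parameters to the $n+1$ horizontal lines of the triangle.

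Second, using the bulk Yang--Baxter equation together with the general solution of the reflection equation set up in the main body of the paper, I would show that $Z_n$ is a symmetric polynomial in the $x_i$ of controlled degree, and that it satisfies a recursion that relates $Z_n$ to $Z_{n-1}$ when two adjacent spectral parameters are specialised so that a corner vertex becomes frozen. These three properties --- polynomial degree bound, symmetry in the spectral parameters, and a recursion at a special value --- are typically sufficient to characterise the partition function uniquely, and the same reasoning that was used for the plain $\dasasm$ enumeration in \cite{DASASM} should then yield a closed-form determinant or Pfaffian expression for $Z_n$.

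Third, specialising the spectral parameters to the combinatorial point of the six-vertex model would produce a product formula for $|\{A \in \dasasm(2n+1) \mid \n_0(A)=2n\}|$, which I would compare directly with Okada's formula for $\vhsasm$s of order $2n+3$. The principal obstacle is the shift in size from $2n+1$ to $2n+3$: the two partition functions naturally live on triangular regions of different dimensions, so the identity cannot emerge from a configuration-level bijection alone. I expect that once the determinant/Pfaffian for $Z_n$ is written out, the free position of the lone $+1$ on the diagonals encodes, after the specialisation, two extra frozen rows on the $\vhsasm$ side; making this correspondence precise --- or alternatively, exhibiting a cross-identity between the two determinants by row and column manipulations analogous to those employed by Okada --- will be the crux of the argument.
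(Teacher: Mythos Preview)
Your broad strategy---define a partition function, prove a closed form via the Yang--Baxter and reflection equations, specialise, and compare with Okada---matches the paper's. However, there are genuine gaps in your understanding of both the combinatorics and the structure of the answer.

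First, the combinatorial picture is off. For \emph{any} odd-order $\dasasm$, the central entry is non-zero; hence when $\n_0(A)=2n$ the unique non-zero diagonal entry is forced to sit at the centre, not at a ``free position that is summed over'', and its value is $(-1)^n$, not $+1$. So there is no summation over a distinguished position, and your speculation that this free position encodes ``two extra frozen rows on the $\vhsasm$ side'' cannot be the mechanism behind the size shift $2n+1\to 2n+3$. In the paper the correct way to impose the constraint is simply to choose the boundary constants $\beta_L=\beta_R=\gamma_L=\gamma_R=0$, $\alpha_L=\alpha_R=\delta_L=\delta_R=1$, which kills all configurations with a $\pm1$ at a left or right boundary vertex.

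Second, the closed form is not a single determinant. The $\qoosasm$ partition function factors as a \emph{product of two Pfaffians} $P_{\lceil n/2\rceil}\cdot Q_{\lceil (n+1)/2\rceil}$ (Theorem~\ref{qoosasmpartfunct}), one of which is exactly Kuperberg's $\operatorname{OSASM}$ partition function. The crucial step you are missing is that at $q=e^{i\pi/6}$ each Pfaffian becomes a \emph{symplectic character} for a specific near-staircase shape (Theorem~\ref{spq}); establishing this for the second factor $Q_m$ requires a non-trivial Pfaffian identity (Lemma~\ref{lem:OkadaType}) derived from Okada's work. Only after this identification does the specialisation $u_i=1$ yield product formulas that can be matched term-by-term with Okada's $\vhsasm$ counts, and the size shift emerges purely from the shapes of the symplectic partitions involved, not from any row/column manipulation between two determinants.
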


The $\dasasm$s $A$ of order $2n+1$ with $\n_0(A)=2n$ can be regarded as odd order versions of $\oosasm$s, and so we denote this subset of
$\dasasm(2n+1)$ by $\qoosasm(2n+1)$. Indeed, as the central entry of an odd order $\dasasm$ is always non-zero, all other entries on the diagonals of such a $\dasasm$ are zero and thus this is for odd order as close as one can get to Kuperberg's original $\qoosasm$s.\footnote{Note that the central
entry $c$ of an $A \in \qoosasm(2n+1)$ is $(-1)^n$: The sum of entries in $A$ is certainly $2n+1$, however it is also $4 s + c$, where $s$ is the sum of entries in the fundamental domain of $A$ without $c$.} Interestingly, vertically symmetric
$\oosasm$s of odd order have been enumerated by Okada \cite[(B2) and (B3) of Theorem~1.3]{OkadaCharacters}. (There they are referred to as $\operatorname{VOSASM}$s, since the vertical symmetry and the diagonal symmetry implies the antidiagonal symmetry.)

Generalizations of Theorem~\ref{maxzero} in terms of partition functions are
provided in Theorems~\ref{qoosasmpartfunct} and \ref{spq}.

\subsection{Outline of the paper} In Section~\ref{basics}, we provide several basic observations: We prove Proposition~\ref{bounds}, and, already for this purpose, it is useful to translate our problems into the counting of certain orientations of triangular regions of the square grid (\emph{triangular six-vertex configurations}). In this section, we also present a simple bijection between the order $n$ \ilse{$\dasasm$} objects of Theorem~\ref{maxmone} and the order $n-1$ \ilse{$\dasasm$} objects of Corollary~\ref{minzero} that consists merely of manipulations close to the diagonal and the antidiagonal. In Section~\ref{weight-sec}, we introduce the vertex weights and some of their properties (\emph{Yang--Baxter equation}, \emph{reflection equations}), and use these weights to define the \emph{partition function}, i.e., a multiparameter generating function of the objects we want to count. There we also introduce several specializations of this partition function that are used to prove our theorems. The partition function is a Laurent polynomial in the so-called \emph{spectral parameters}, and, in Section~\ref{char-part}, we provide characterizations of this partition function that are used later on. In Sections~\ref{ast} -- \ref{qoosasm}, we then employ all these preparations to prove Theorems~\ref{maxmone}, \ref{maxone} and \ref{maxzero}, respectively. Finally, in Appendix~A we provide an ad-hoc counting of alternating sign triangles with \ilse{a single} $-1$.

\section{Basics: Proof of Proposition~\ref{bounds}, characterization of extreme configurations and Theorem~\ref{maxmone} implies Corollary~\ref{minzero}}
\label{basics}

\subsection{Triangular six-vertex configurations} Let $A=(a_{i,j})_{1 \le i, j \le 2n+1 }$ be
a  $\dasasm$ of order $2n+1$. Its restriction
to the fundamental triangle $(a_{i,j})_{1 \le i \le n+1, i \le j \le 2n+2-i}$ is
said to be an \emph{odd $\dasasm$-triangle} of order $n$.
An example of an odd $\dasasm$-triangle of order $6$ is given next.
\begin{equation}
\label{ex-dasasm-triangle}
\begin{array}{ccccccccccccccc}
 & 0  & 0 & 0 & \color{red} 1 & 0 & 0 & 0 & 0 & 0 & \color{red} 0 & 0 & 0 & 0 &  \\
 &   & 1 & 0 & \color{red} -1 & 0 & 0 & 0 & 1 & 0 & \color{red} 0 & 0 & 0 &  &  \\
 &   &  & 0 & \color{red} 1 & 0 & 0 & 0 & -1 & 0 & \color{red} 1 & 0 &  &  &  \\
 &   &  &  & \color{red} -1 & \color{red} 1 & \color{red} 0 & \color{red} 0 & \color{red} 0 & \color{red} 0 & \color{red} -1 &  &  &  &  \\
  &   &  &  &  & -1 & 1 & 0 & 0 & 0 &  &  &  &  &  \\
  &   &  &  &  &  & -1 & 0 & 1 &  &  &  &  &  &  \\
  &   &  &  &  &  &  & 1 &  &  &  &  &  &  &
\end{array}
\end{equation}
In fact, a triangular array
$(a_{i,j})_{1 \le i \le n+1, i \le j \le 2n+2-i}$ of this form, in which each entry is $0$,
$1$ or $-1$, is an odd $\dasasm$-triangle of order $n$ if and only if, for each
$j \in \{1,2,\ldots,n+1\}$, the
non-zero entries in the following sequence alternate---read from top left to the top right---and add up to $1$.
\begin{equation}
\label{path}
\begin{array}{ccccccc}
& a_{1,j} & & & & a_{1,2n+2-j} & \\
&a_{2,j} & & & & a_{2,2n+2-j} & \\
\downarrow &\vdots &  & & & \vdots & \uparrow \\
&a_{j-1,j} & & & & a_{j-1,2n+2-j} & \\
&a_{j,j} & a_{j,j+1} & \ldots & a_{j,2n+1-j} & a_{j,2n+2-j} & \\
&  & & \rightarrow & &  &
\end{array}
\end{equation}
(In the example \eqref{ex-dasasm-triangle}, this sequence is indicated in red for $j=4$.)
Let us clarify that in the special case $j=n+1$, we require that the sequence
$$a_{1,n+1},a_{2,n+1},\ldots,a_{n,n+1},a_{n+1,n+1},a_{n,n+1},\ldots,a_{1,n+1}$$
has this property, and this is satisfied if and only if the non-zero entries of $a_{1,n+1},a_{2,n+1},\ldots,a_{n+1,n+1}$ alternate, the first non-zero entry in this sequence is $1$, and $a_{n+1,n+1} \not= 0$.
For an odd $\dasasm$-triangle $A$, we define $\n_{\alpha}(A) = \n_{\alpha}(A')$, where $A'$ is the $\dasasm$ corresponding to $A$.

In the {\it six-vertex model}, these triangular arrays correspond to orientations of a triangular region
of the square grid with $n+2$ centered rows as indicated in Figure~\ref{DASASMgraph}, where
\begin{itemize}
\item the degree $4$ vertices (\emph{bulk vertices}) have two incoming and two outgoing edges, and
\item the top vertical edges point up.
\end{itemize}
\ilse{Rows $2,3,\ldots,n+2$ in the grid will correspond to rows $1,2,\ldots,n+1$ of the $\dasasm$-triangle, respectively.}
The term \emph{six-vertex} is derived from the fact that there are $\binom{4}{2}=6$ possible local configurations around a bulk vertex.
The underlying undirected graph is denoted by $\mathcal{T}_n$ in the following.

The triangular six-vertex configuration of an odd $\dasasm$-triangle can be obtained by restricting the standard six-vertex configuration on a square of the respective $\dasasm$ to the fundamental triangle.
More concretely, each vertex of $\mathcal{T}_n$, except for the degree $1$ vertices at the top, corresponds to an entry of the associated odd $\dasasm$-triangle of order $n$. As usual, a bulk vertex
whose two incident vertical edges are outgoing ($\one$), respectively incoming
($\mone$),
corresponds to a $1$, respectively $-1$, in the odd $\dasasm$-triangle, while
all other bulk vertices correspond to zeros.
\emph{Left boundary vertices} and \emph{right boundary vertices} are the degree $2$ vertices on the left and right boundary, and such a vertex corresponds to a
 $1$ (resp.\ $-1$) if and only if the local configuration around the vertex is the restriction of the local bulk configuration corresponding to a $1$ (resp.\ $-1$). This also applies to the bottom vertex. That is,
$$
\one, \lone, \rone, \bone \leftrightarrow 1,    \mone, \lmone, \rmone, \bmone \leftrightarrow -1 \quad \text{and} \quad
\sw, \ne, \se, \nw, \lin, \lout, \rin, \rout \leftrightarrow 0.$$
The configuration in Figure~\ref{DASASMgraph} is the six-vertex configuration of the odd $\dasasm$-triangle in \eqref{ex-dasasm-triangle}. Note that when restricting the six-vertex configuration of an odd $\dasasm$ to the fundamental triangle, the local configuration $\lone$ on the left-boundary must originate from $\one$, and thus corresponds to $1$, because the only other local configuration with this restriction, i.e., $\nw$, cannot appear on the diagonal of a six-vertex configuration of a $\dasasm$. We have a similar situation for $\lmone, \rone, \rmone$.

\begin{figure}
\scalebox{0.4}{\includegraphics{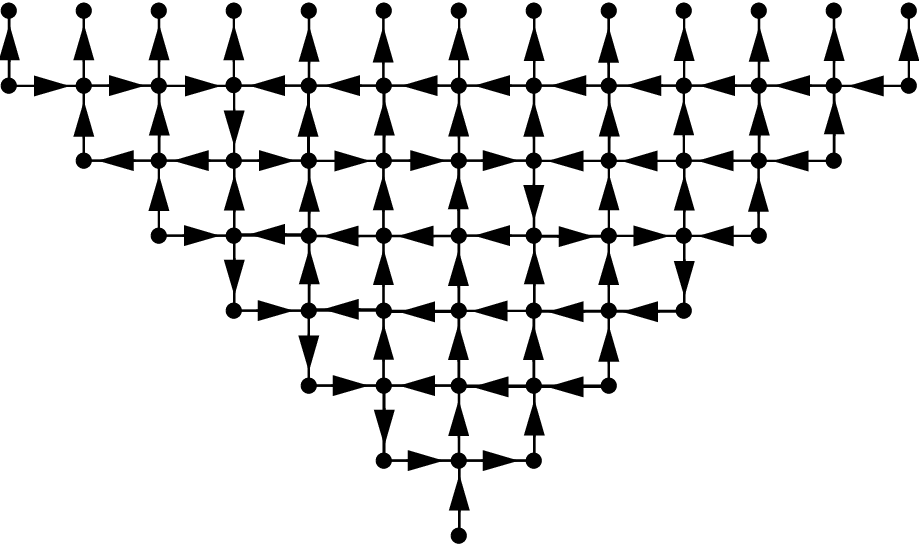}}
\caption{\label{DASASMgraph} Triangular six-vertex configuration of order $6$}
\end{figure}

\subsection{Proof of Proposition~\ref{bounds}}
\label{prop}

In order to derive the crucial identity \eqref{boundeq}, we employ the fact that, in a directed graph, the sum of all outdegrees is equal to the sum of all indegrees. The bulk vertices of a triangular six-vertex configuration as well as the vertices on the left and right boundary that correspond to $\pm 1$'s are balanced in the sense that the outdegree is equal to  the indegree, and so these vertices do not contribute to this identity.

For an odd $\dasasm$-triangle $A$, we denote the number of boundary zeros with indegree $2$ in the corresponding triangular six-vertex configuration by $\n_{0,\text{in}}(A)$, and number of boundary zeros with outdegree $2$ by $\n_{0,\text{out}}(A)$. In the example in Figure~\ref{DASASMgraph}, we have $\n_{0,\text{in}}(A)=0$ and $\n_{0,\text{out}}(A)=6$.  Now, since all $2n+1$ top vertices have indegree $1$,
the sum of all outdegrees is $2 \n_{0,\text{out}}(A) + [a_{n+1,n+1}=1]$ (where we use the Iverson bracket, i.e., $[\text{statement}]=1$ if the statement is true, and $0$ otherwise), while the sum of all indegrees is $2 \n_{0,\text{in}}(A) + 2n+1 + [a_{n+1,n+1}=-1]$, and, since $[a_{n+1,n+1}=1]-[a_{n+1,n+1}=-1]=a_{n+1,n+1}$,
we can conclude that
\begin{equation}
\label{boundeq}
2 \n_{0,\text{out}}(A) + a_{n+1,n+1} - (2 \n_{0,\text{in}}(A) + 2n+1)=0.
\end{equation}

The lower bounds for $\n_{\pm 1}(A)$ are trivial and the upper bound for $\n_{0}(A)$ follows because the central entry of a $\dasasm$ is always non-zero.

An example of a $\dasasm$ in which the upper bound of $\n_0$ is attained is the matrix where every other entry of the restriction of the superdiagonal (i.e., the secondary diagonal immediately above the main diagonal) to the fundamental triangle is $1$, the central entry is
$(-1)^n$ and all other entries of the fundamental triangle are $0$. \ilse{The corresponding odd $\dasasm$-triangles of order $4$ and $5$ are the following:}
$$
\begin{array}{ccccccccccccccc}
 &   &  &  & 0 & 1 & 0 & 0 & 0 & 0 & 0 & 0 & 0 &  &  \\
 &   &  &  &  & 0 & 0 & 0 & 0 & 0 & 0 & 0 &  &  &  \\
 &   &  &  &  &  & 0 & 1 & 0 & 0 & 0 &  &  &  &  \\
 &   &  &  &  &  &  & 0 & 0 & 0 &  &  &  &  &  \\
  &   &  &  &  &  & &  & 1 &  &  &  &  &  &
\end{array}
\begin{array}{ccccccccccccccc}
 &   &  &  & 0 & 1 & 0 & 0 & 0 & 0 & 0 & 0 & 0 & 0 & 0  \\
 &   &  &  &  & 0 & 0 & 0 & 0 & 0 & 0 & 0 & 0  &  0 &  \\
 &   &  &  &  &  & 0 & 1 & 0 & 0 & 0 & 0 & 0  &  &  \\
 &   &  &  &  &  &  & 0 & 0 & 0 & 0 & 0  &  &  &  \\
  &   &  &  &  &  & &  & 0 & 1 & 0 &  &  &  &  \\
   &   &  &  &  &  & &  &  & -1 &  &  &  &  &
\end{array}
$$
As we \ilse{will} see below that
$\n_{-1}(A)=n$ implies $\n_{1}(A)=0$, and $\n_{1}(A)=n+1$ implies $\n_{-1}(A)=0$, the fact that the two trivial inequalities are sharp follows from the fact that the upper bounds of $\n_{\pm 1}$ are sharp. The latter facts are shown below.

\emph{\ilse{Inequality} $\n_{-1}(A) \le n$:} Using
$\n_1(A) + \n_{-1}(A) + \n_{0,\out}(A) + \n_{0,\inw}(A) = 2n+1$ and \eqref{boundeq}, we deduce
\begin{align*}
\n_{-1}(A) &= 2n+1-\n_{1}(A) - \n_{0,\text{out}}(A) - \n_{0,\text{in}}(A) \\
&=
n + \frac{1}{2} - \n_{1}(A) - 2 \n_{0,\text{in}}(A) + \frac{a_{n+1,n+1}}{2} \le n.
\end{align*}
We have equality iff $\n_{0,\text{in}}(A)=\n_{1}(A)=0$ and $a_{n+1,n+1}=-1$, or else $\n_{0,\text{in}}(A)=0$ and $\n_{1}(A)=a_{n+1,n+1}=1$. However, the latter possibility cannot occur since
then the bottom bulk vertex in the central column would have three incoming edges---the left (resp. right) boundary vertex adjacent to it is of type $\lmone$ or $\lout$ (resp. $\rmone$ or $\rout$)---and this is impossible.

An order $2n+1$ $\dasasm$ with $\n_{-1}(A)=n$ is
$$
A=\left( \begin{matrix}
0 & 1 & 0 & \cdots & \cdots & \cdots & \cdots & 0 \\
1 & -1 & 1 & 0 &   &  &  & \vdots \\
0 & 1 & -1 & 1 & 0  &   &  & \vdots \\
0 & 0 & 1 & -1 & 1 & 0  &  & \vdots \\
\vdots &  & \ddots  &  \ddots & \ddots & \ddots  & \ddots & \vdots   \\
\vdots &  & & 0 & 1 & -1 & 1 & 0 \\
\vdots &  &  &  & 0 & 1 & -1  & 1 \\
0 & \cdots & \cdots & \cdots & \cdots & 0 & 1 & 0
\end{matrix} \right).
$$

\emph{\ilse{Inequality} $\n_{1}(A) \le n+1$:} Here we have
$$
\n_{1}(A) = n + \frac{1}{2} - \n_{-1}(A) - 2 \n_{0,\text{in}}(A) + \frac{a_{n+1,n+1}}{2} \le  n+1,
$$
and equality is attained iff $\n_{0,\text{in}}(A)=0$, $\n_{-1}(A)=0$ and $a_{n+1,n+1}=1$. The identity matrix is a matrix where equality is attained.

\emph{\ilse{Inequality} $\n_{0}(A) \ge n$:} From \eqref{boundeq}, it follows
$$
\n_{0}(A) =  \n_{0,\text{out}}(A) + \n_{0,\text{in}}(A)  \ge \n_{0,\text{out}}(A) - \n_{0,\text{in}}(A) = n + \frac{1}{2} -
\frac{a_{n+1,n+1}}{2} \ge n,
$$
and so we have equality iff $\n_{0,\text{in}}(A)=0$ and $a_{n+1,n+1}=1$. Since $\n_{1}(A)=n+1$ implies $\n_{0}(A)=n$, this inequality is sharp too.

\subsection{Characterization of extreme configurations}

The proof of Proposition~\ref{bounds} implies immediately the following characterization.

\begin{cor}
\label{char}
Let $A$ be an odd $\dasasm$-triangle of order $n$.
\begin{enumerate}
\item\label{char-1} We have $\n_{-1}(A)=n$ if and only if, for each row except the bottom row, the sum of entries is $1$ when disregarding the left and right boundary entries. In the six-vertex configuration, this is fulfilled if and only if, in each row, the leftmost horizontal edge points to the right and the rightmost horizontal edge points to the left.
\item\label{char-2} We have $\n_{1}(A)=n+1$ if and only if the sum of entries in each row is $1$. In the six-vertex configuration, this is fulfilled if and only if, in each row, the leftmost vertical edge and the rightmost vertical edge point upwards.
\item\label{char-3} We have $\n_{0}(A)=n$ if and only if, for each row, one of the following is true.
\begin{enumerate}
\item The sum of entries is $1$.
\item The sum of entries with the left or right boundary entry
excluded from the sum is $1$.
\item The sum of entries with both boundary entries excluded from the sum is $1$.
\end{enumerate}
\end{enumerate}
\end{cor}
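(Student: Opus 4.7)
The plan is to deduce the characterization directly from the case analysis carried out inside the proof of Proposition~\ref{bounds}. That proof established
\begin{align*}
\n_{-1}(A)=n &\iff \n_{0,\text{in}}(A)=\n_1(A)=0 \text{ and } a_{n+1,n+1}=-1,\\
\n_1(A)=n+1 &\iff \n_{0,\text{in}}(A)=\n_{-1}(A)=0 \text{ and } a_{n+1,n+1}=+1,\\
\n_0(A)=n &\iff \n_{0,\text{in}}(A)=0 \text{ and } a_{n+1,n+1}=+1,
\end{align*}
so the remaining work is purely translational: first, rewrite each right-hand side in terms of the admissible boundary-vertex types $\lone,\lmone,\lin,\lout$ and $\rone,\rmone,\rin,\rout$ of $\mathcal{T}_n$; and second, convert those in turn into the claimed row-sum/arrow-direction conditions.

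For the first step I would tabulate each left-boundary type by the pair (leftmost horizontal direction, leftmost vertical direction) obtained by restricting the corresponding bulk configuration to the two in-triangle edges at $(i,i)$. The table reads: leftmost horizontal edge points right exactly for $\lmone$ and $\lout$; leftmost vertical edge points up exactly for $\lone$ and $\lout$; and hence ``no $\lin$'' is precisely the disjunction of these two outgoing conditions. The right boundary is handled symmetrically. Parts~(1)--(3) then forbid, respectively, $\{\lone,\lin,\rone,\rin\}$, $\{\lmone,\lin,\rmone,\rin\}$, and $\{\lin,\rin\}$ at each non-central boundary vertex, which is exactly the arrow-direction formulation stated in the corollary.

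For the second step I would use the standard six-vertex partial-sum dictionary $h_j^i=1-\sum_{k\le j}a_{i,k}$ for horizontal arrows (with $h_j^i=1$ denoting a rightward arrow), its vertical analogue $v_j^{i+1}=1-\sum_{i'\le i}a_{i',j}$, and the diagonal and antidiagonal symmetries $a_{i',i}=a_{i,i'}$ and $a_{i',2n+2-i}=a_{i,2n+2-i'}$. This converts ``leftmost horizontal right and rightmost horizontal left in row $i$'' into $\sum_{j=i+1}^{2n+1-i}a_{i,j}=1$ and ``leftmost and rightmost vertical edges up in row $i$'' into $\sum_{j=i}^{2n+2-i}a_{i,j}=1$, giving parts~(1) and~(2). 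For part~(3), writing $L=S(i-1)$, $L'=S(i)$, $R'=S(2n+1-i)$, $R=S(2n+2-i)$ for the relevant partial row sums (so that e.g.\ clause (a) reads $R-L=1$), the disjunction of (a), (b), (c) factors as $(R=1 \vee R'=1) \wedge (L=0 \vee L'=0)$, and these two factors are exactly ``no $\rin$ at $(i,2n+2-i)$'' and ``no $\lin$ at $(i,i)$''.

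The one subtle point is the bottom row $i=n+1$. In parts~(2) and~(3) the singleton-row condition is simply $a_{n+1,n+1}=1$, which matches Proposition~\ref{bounds}. In part~(1) the bottom row is explicitly excluded from the statement, so one must check separately that the row conditions for $i=1,\ldots,n$ already force $a_{n+1,n+1}=-1$. This follows because the row-$n$ condition gives $a_{n,n+1}=1$, after which the alternating-sign constraint on column $n+1$ rules out $a_{n+1,n+1}=+1$, leaving $a_{n+1,n+1}=-1$ since the central entry of an odd order $\dasasm$ is always non-zero.
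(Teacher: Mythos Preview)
Your proposal is correct and follows exactly the route the paper indicates: the paper's entire proof is the single sentence ``The proof of Proposition~\ref{bounds} implies immediately the following characterization,'' and you have spelled out that implication in detail. Your translation of the equality conditions $\n_{0,\mathrm{in}}(A)=0$, $\n_{\pm1}(A)=0$, $a_{n+1,n+1}=\pm1$ into boundary-vertex types, then into edge orientations via the partial-sum dictionary (using the diagonal/antidiagonal symmetries to convert column partial sums at the boundary into row partial sums), and finally into the stated row-sum clauses, is accurate; the bottom-row check in part~(1) is also handled correctly.
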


\begin{rem}
\begin{enumerate}
\item Odd $\dasasm$-triangles $A$ of order
$n$
with $\n_{-1}(A)=n$ are precisely the odd $\dasasm$-triangles of order $n$ in which the sum of entries is minimal. Indeed, the characterization in Corollary~\ref{char}
(\ref{char-1}) implies the following different characterization: An odd $\dasasm$-triangle $A$ of order $n$ satisfies $\n_{-1}(A)=n$ if and only if each column sum is $0$. In order to see this, also recall that the bottom entry of such an $A$ is $-1$. Now the sum of entries in an odd $\dasasm$-triangle is at least $0$ (since each column sum is at least $0$), and the minimum is attained if and only if each column sum is $0$.
\item The characterization in Corollary~\ref{char} (\ref{char-2}) implies that
odd $\dasasm$-triangles $A$ of order $n$
with $\n_{1}(A)=n+1$ are precisely the odd $\dasasm$-triangles of order $n$ in which the sum of entries is maximal. This is because the sum of entries in an odd $\dasasm$-triangle of order $n$ is at most $n+1$, since each of the $n+1$ rows has row sum at most $1$, and the maximum is attained if and only if each row has sum $1$.
\end{enumerate}
\end{rem}

\subsection{Theorem~\ref{maxmone} implies Corollary~\ref{minzero}}
\label{imply}
This is best understood in terms of the six-vertex model. As noted in the proof of Proposition~\ref{bounds}, the triangular six-vertex configurations equivalent to the objects from Corollary~\ref{minzero} are characterized by the two facts that there is no left or right boundary vertex with indegree $2$ and the bottom vertical edge points upwards.

To transform an order $n$ configuration of Corollary~\ref{minzero} into an order $n+1$ configuration of Theorem~\ref{maxmone}, add vertices left of each left boundary vertex and connect the new vertices to their right neighbors by an edge that is directed to the right, see Figure~\ref{implies} for an example. Add similar vertices and edges on the right boundary, where the new horizontal edges are directed to the left.

Also add a new vertex below the bottom central vertex and introduce vertical edges that connect the $2n+3$ added vertices to their top neighbors (for the two new top boundary vertices add vertices above them and two vertical edges that are directed upwards). Since the indegree of each former boundary vertex was either $0$ or $1$ (and is currently either $1$ or $2$) and the former bottom vertex had indegree $0$ (and has currently indegree $2$), there is a unique way to orient the new vertical edges such that the former boundary and bottom vertices (now bulk vertices) are balanced. By Corollary~\ref{char}, this produces an object with the desired properties.

To reverse the transformation, simply delete left and right boundary vertices of an order $n+1$ configuration of Theorem~\ref{maxmone} as well as the bottom vertex, and all edges incident with these vertices. None of the new left or right boundary vertices can have indegree $2$, since, before the deletion of the vertices and edges, the leftmost vertical edge in each row was pointing to the right, while the rightmost vertical edge in each row was pointing to the left. This also implies that the bottom bulk vertex in the central column was of type $\one$ before the deletion, and thus the new bottom vertex points upwards.

\begin{figure}
\scalebox{0.4}{\includegraphics{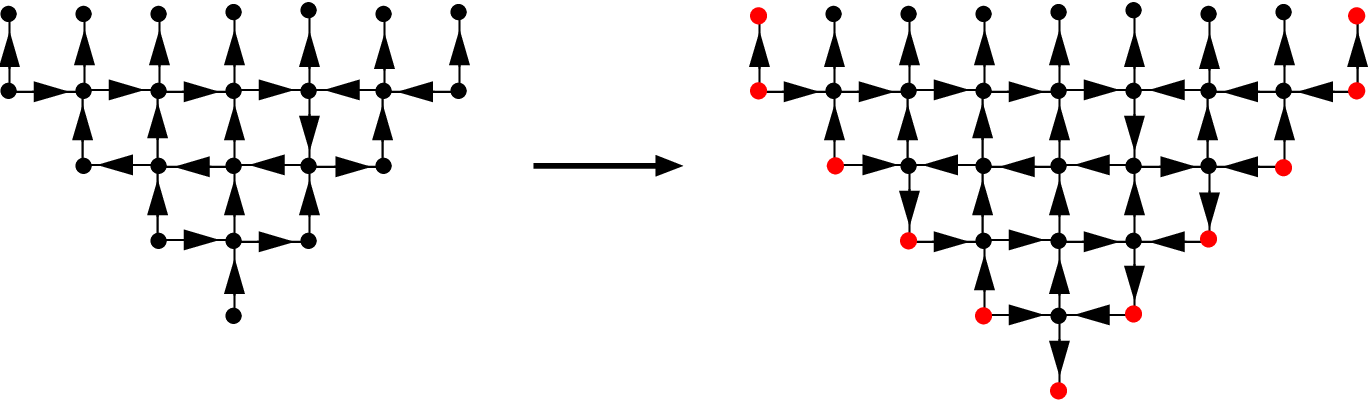}}
\caption{\label{implies} Illustration of the proof that Theorem~\ref{maxmone} implies Corollary~\ref{minzero}}
\end{figure}

\subsection{Alternating sign triangles}
\label{astintroduce}

If we delete the diagonals of an odd $\dasasm$-triangle $A$ of order $n$ with $\n_{-1}(A)=n$, then, by Corollary~\ref{char}, we obtain an object of the following type.

\begin{defi}
An \emph{alternating sign triangle} ($\ast$) of order $n$ is a triangular array $(a_{i,j})_{1 \le i \le n, i \le j \le 2n-i}$ in which each entry is $0$, $1$ or $-1$ and the following conditions are fulfilled.
\begin{enumerate}
\item\label{AST-1} The non-zero entries alternate in each row and each column.
\item\label{AST-2} All row sums are $1$.
\item\label{AST-3} The topmost non-zero entry of each column is $1$ (if it exists).
\end{enumerate}
The set of order $n$ $\ast$s is denoted by $\ast(n)$.
\end{defi}

Here is a list of all $\ast$s of order $3$.
$$
\begin{aligned}
\begin{array}{ccccc}
1 & 0 & 0 & 0 & 0  \\
& 1 & 0 & 0  & \\
& & 1 & &
\end{array} & \left|
\begin{array}{ccccc}
0 & 0 & 0 & 1 & 0  \\
& 1 & 0 & 0  & \\
& & 1 & &
\end{array} \right|  & \hspace{-3mm}
\begin{array}{ccccc}
0 & 0 & 0 & 0 & 1  \\
& 1 & 0 & 0  & \\
& & 1 & &
\end{array} \hspace{3mm} &   \left|
\begin{array}{ccccc}
1 & 0 & 0 & 0 & 0  \\
& 0 & 0 & 1  & \\
& & 1 & &
\end{array} \right.  \\
\begin{array}{ccccc}
0 & 1 & 0 & 0 & 0  \\
& 0 & 0 & 1  & \\
& & 1 & &
\end{array}  & \left|
\begin{array}{ccccc}
0 & 0 & 0 & 0 & 1  \\
& 0 & 0 & 1  & \\
& & 1 & &
\end{array} \right| & \hspace{-3mm}
\begin{array}{ccccc}
0 & 0 & 1 & 0 & 0  \\
& 1 & -1 & 1  & \\
& & 1 & &
\end{array}
\end{aligned}
$$

In fact, order $n$ $\ast$s are in bijection with \ilse{order $2n+1$ $\dasasm$s} with $\n_{-1}(A)=n$ as the diagonals can be reconstructed as follows:
$$
\text{Place a $-1$ below each column of the $\ast$ whose entries add up to $1$, and place a $0$ otherwise.}
$$
The resulting triangle is surely an odd $\dasasm$-triangle $A'$ of order $n$, as the non-zero entries of all sequences as given in $\eqref{path}$ are alternating and add up to $1$; furthermore, it is the unique way of adding entries in these positions to achieve that. Moreover, we have $\n_{-1}(A')=n$ as an $\ast$ has precisely $n$ columns that add up to $1$. This is because
the total sum of all entries in an order $n$ $\ast$ is $n$ (by property~\eqref{AST-2} in the definition) and each of the $2n-1$ column sums is either $0$ or $1$
(by properties~\eqref{AST-1} and \eqref{AST-3}).

Theorem~\ref{maxmone} states that, for all non-negative integers $n,m$, the number of $n \times n$ $\asm$s with $m$ occurrences of $-1$ is equal to the number of $\ast$s of order $n$ with $m$ occurrences of $-1$.

For $m=0$, this is easy to see: $\asm$s without $-1$ are permutation matrices. There are also $n!$ $\ast$s of order $n$: Each row contains precisely one $1$ and we build up the $\ast$ by placing in each row a $1$, starting with the bottom row. For the $1$ in the bottom row, there is one choice, for the $1$ in the penultimate row there are in \ilse{principle} $3$ possible columns, but one is already taken by the $1$ in the bottom row and thus there are $2$ actual choices. In general, in the $i$-th row counted from the bottom, there are $2i-1$ columns, but $i-1$ are already occupied by $1$'s that are situated in rows below. This leaves us with $i$ possibilities. In total, there are $1 \cdot 2 \cdot 3 \cdots  n$ $\ast$s with $n$ rows and no $-1$.

In order to give an indication as to why it is probably not easy to construct a bijection between $\asm$s and $\ast$s, we elaborate on the $m=1$ case in Appendix~\ref{k1}.

As a further comment that is also related to the previous subsection, observe that in order to transform an odd $\dasasm$-triangle $A$ of order $n$ with $\n_0(A)=n$ into the corresponding $\ast$ of order $n+1$, one simply has to replace all $-1$'s along the left and right boundary by $0$'s.

\subsection{Dual alternating sign triangles and quasi alternating sign triangles}
\label{section_dast}

If we delete the diagonals of an odd $\dasasm$-triangle of order $n$ with $\n_{1}(A)=n+1$, then, by Corollary~\ref{char}, we obtain triangular arrays of the following type.

\begin{defi}
A \emph{dual alternating sign triangle} ($\dast$) of order $n$ is a triangular array \\
$(a_{i,j})_{1 \le i \le n, i \le j \le 2n-i}$ in which each entry is $0$, $1$ or $-1$ and the following conditions are fulfilled.
\begin{enumerate}
\item\label{DAST-1} The non-zero entries alternate in each row and each column.
\item\label{DAST-2} All column sums are $0$.
\item\label{DAST-3} The topmost non-zero entry of each column is $1$ (if it exists).
\end{enumerate}
The set of order $n$ $\dast$s is denoted by $\dast(n)$.
\end{defi}

Next we display all order $3$ $\dast$s.
$$
\begin{aligned} \left.
\begin{array}{ccccc}
0 & 0 & 0 & 0 & 0  \\
& 0 & 0 & 0  & \\
& & 0 & &
\end{array} \right| & \left.
\begin{array}{ccccc}
0 & 1 & 0 & 0 & 0  \\
& -1 & 0 & 0  & \\
& & 0 & &
\end{array} \right| & \left. \hspace{-3mm}
\begin{array}{ccccc}
0 & 0 & 0 & 1 & 0  \\
& 0 & 0 & -1  & \\
& & 0 & &
\end{array} \hspace{3mm}  \right| &
\begin{array}{ccccc}
0 & 0 & 1 & 0 & 0  \\
& 0 & -1 & 0  & \\
& & 0 &  &
\end{array}   \\ \left.
\begin{array}{ccccc}
0 & 0 & 1 & 0 & 0  \\
& 0 & 0 & 0  & \\
& & -1 &  &
\end{array} \right| & \left.
\begin{array}{ccccc}
0 & 0 & 0 & 0 & 0  \\
& 0 & 1 & 0  & \\
& & -1 & &
\end{array} \right| & \left. \hspace{-3mm}
\begin{array}{ccccc}
0 & 1 & 0 & 0 & 0  \\
& -1 & 1 & 0  & \\
& & -1 & &
\end{array} \right| &
\begin{array}{ccccc}
0 & 0 & 0 & 1 & 0  \\
& 0 & 1 & -1  & \\
& & -1 & &
\end{array}
\end{aligned}
$$

It is possible to reconstruct the deleted diagonal entries, except for the rows that contain only zeros \ilse{(referred to as $0$-rows)}, in the following \ilse{way}:
\begin{itemize}
\item Add a $1$ below the bottom entry.
\item If the leftmost non-zero entry of a row is $-1$ (resp.\ 1), place a $1$ (resp.\ 0) left of the leftmost entry of that row.
\item If the rightmost non-zero entry of a row is $-1$ (resp.\ 1), place a $1$ (resp.\ 0) right of the rightmost entry of that row.
\item In the case of $0$-rows, there are two choices of placing a $1$ on one end and a $0$ on the other.
\end{itemize}
Therefore,
\begin{equation}
\label{2enum}
|\{A \in \dasasm(2n+1) \mid \n_{1}(A)=n+1\}| = \sum_{A \in \dast(n)} 2^{\#
\text{of $0$-rows of A}}.
\end{equation}
The $\dast$s listed above correspond to $8,2,2,2,2,2,1,1$ $\dasasm$s of order $7$ with $\n_{1}(A)=4$, respectively. This is in accordance with Theorem~\ref{maxone} as there are $20$ $\cspp$s in a $3 \times 3 \times 3$ box.

We \ilse{now} introduce another set of triangular arrays that is, on the one hand, equinumerous with the set of $A \in \dasasm(2n+1)$ such that $\n_{1}(A)=n+1$, and, on the other, contains all $\ast$s of order $n$.

\begin{defi}
A \emph{quasi alternating sign triangle} ($\qast$) of order $n$ is a triangular array \\
$(a_{i,j})_{1 \le i \le n, i \le j \le 2n-i}$ in which each entry is $0$, $1$ or $-1$ and the following conditions are fulfilled.
\begin{enumerate}
\item\label{QAST-1} The non-zero entries alternate in each row and column.
\item\label{QAST-2} The row sums are $1$ for rows $1,2,\ldots,n-1$, and $0$ or $1$ for row $n$.
\item\label{QAST-3} The topmost non-zero entry in each column is $1$ (if it exists).
\end{enumerate}
The set of order $n$ $\qast$s is denoted by $\qast(n)$.
\end{defi}

To construct a bijection between $\qast(n)$ and $\{A \in \dasasm(2n+1) \mid \n_{1}(A)=n+1\}$, recall from Corollary~\ref{char} (\ref{char-2}) that the objects in the latter set correspond to triangular six-vertex configurations on ${\mathcal T}_n$ such that, in each row, the leftmost and the rightmost vertical edge point upwards. Now we perform the following operations on such configurations:
\begin{enumerate}
\item For the second vertex of each row (excluding the first and the last row, which contain only degree $1$ vertices), we interchange the
orientation of the bottom vertical edge with the orientation of the left horizontal edge incident with this vertex.
\item For the penultimate vertex of each row (excluding the first and the last row), we interchange the orientation of the bottom vertical edge with the orientation of the right horizontal edge incident with this vertex.
\end{enumerate}
This leads to triangular six-vertex configurations of order $n$ such that, in each row, the leftmost horizontal edge points to the right and the rightmost horizontal edge points to the left with the exception of the rightmost horizontal edge in the penultimate row. Compare this to the triangular six-vertex configurations in Corollary~\ref{char} (\ref{char-1}) which were shown to correspond to $\ast$s of order $n$. Similarly, it can be shown that the present configurations correspond to $\qast$s of order $n$.

If we perform only the first operation above, we obtain triangular six-vertex configurations that are equivalent to the following triangular arrays. These arrays can be seen as a mixture of $\ast$s and $\dast$s.

\begin{defi}
A \emph{mixed alternating sign triangle} ($\mast$) of order $n$ is a triangular array \\
$(a_{i,j})_{1 \le i \le n, i \le j \le 2n-i}$ in which each entry is  $0$, $1$ or $-1$ and the following conditions are fulfilled.
\begin{enumerate}
\item\label{MAST-1} The non-zero entries alternate in each row and column.
\item\label{MAST-2} The column sums are $0$ for columns $n+1,\ldots,2n-1$.
\item\label{MAST-3} The first non-zero entry in each row and each column is $1$ (if it exists).
\end{enumerate}
\end{defi}

Similarly, it can be seen that $\ast(n)$ is in bijection with the set of $\mast$s of order $n$ whose bottom entry is $1$, and also in bijection with the set of $\mast$s of order $n$ whose $n$-th column has sum $0$.

Here are two related remarks.
\begin{itemize}
\item As $\ast(n) \subseteq \qast(n)$, and $\ast(n)$ is equinumerous with
the set of $n \times n$ $\asm$s, while $\qast(n)$ is equinumerous with the set of $\cspp$s in an $n \times n \times n$ box, there should exist a natural subset of the set of $\cspp$s in an $n \times n \times n$ box that has the same cardinality as the set of order $n$  $\asm$s.
\item The set of order $n$ $\qast$s can be partitioned according to the entry at the bottom and the central column sum: (i) the bottom entry is $1$, (ii) the bottom entry is $0$ and the central column sum is $0$, (iii) the bottom entry is $0$ and the central column sum is $1$. Each of the first two sets is actually equinumerous with $\ast(n)$. The set of order $n$ $\qast$s that belong to the first or third class are equinumerous with odd $\dasasm$-triangles of order $n-1$ with the property that the entries of each row---except for possibly the bottom row---sum to $1$, where possibly either or both of the boundary entries are disregarded.  Indeed, such a $\qast$ is bijectively transformed into such an odd $\dasasm$-triangle by replacing each $0$ on the diagonals by $-1$ if it is contained in a column with sum $1$.  Compare this to Corollary~\ref{char} \eqref{char-3} to see that the set of odd $\dasasm$-triangles $A$ of order $n-1$ with $\n_{0}(A)=n-1$ is a subset of the set of these $\dasasm$-triangles.
\end{itemize}

\section{Weights, local equations and the partition function}
\label{weight-sec}

We have seen that the various classes of extreme $\dasasm$s under consideration in
Theorems~\ref{maxmone}--\ref{maxzero} have an alternative interpretation in terms of certain orientations of triangular regions of the square grid.  The differences between the classes are due to the various conditions along the left, right and bottom boundary.
In this section, we define a universal generating function---in the language of physics a universal \emph{partition function}---for all of these orientations of $\mathcal{T}_n$ as well as two refinements according to the orientation of the bottom vertical edge.  These generating functions involve \emph{boundary parameters} which are sufficiently general such that the generating function of each class of extreme $\dasasm$s is a specialization of the universal generating function or one of its refinements.

\subsection{Weights} We assign a weight $\w(C)$ to each triangular six-vertex configuration $C$. This weight is the product of the vertex weights\ilse{,} to be defined next. The weight of an individual vertex depends---besides a global variable $q$---on the \emph{orientations of the surrounding edges}, on a \emph{label} which is assigned to the vertex, and, for bulk vertices, on the \emph{position of the label}.

We define
$$\bar x = x^{-1}  \quad \text{and} \quad
\sigma(x)=x-\bar x,$$
and we use this notation throughout the rest of this paper.

The vertex weights are provided in Table~\ref{weights}, with the exception of the degree $1$ vertices, which always have weight $1$.  The label of the vertex is denoted by $u$, and we assume in this table that for bulk vertices, the label is situated in the south-west corner, as follows.
$$
\scalebox{1.5}{
\psfrag{u}{\tiny $u$}
\includegraphics{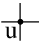}}
$$
Also elsewhere, if the position of the label of a degree $4$ vertex is not indicated, it is assumed to be in the south-west corner.
If the label appears in some other corner, one has to rotate the diagram until the label is in the south-west corner, and then determine the weight.
 The left boundary weights depend on the \emph{left boundary constants} $\alpha_L, \beta_L, \gamma_L, \delta_L$, and the right boundary weights depend on the \emph{right boundary constants} $\alpha_R, \beta_R, \gamma_R, \delta_R$. \ilse{For boundary vertices the position of the label does not matter.}

\begin{table}[ht]
$$
\begin{array}{|l|l|l|c|} \hline\rule[-0.8ex]{0ex}{3.5ex}
\text{Bulk weights} & \text{Left boundary weights} & \text{Right boundary weights} & \text{Matrix entry}  \\ \hline
 \hline\rule[-0.6ex]{-0.4ex}{3.5ex}
 \wone{u}=1 & \wlone{u}= \frac{\beta_L \, q u +  \gamma_L \, \bar q \bar u}{\sigma(q^2)}&  \wrone{u} = \frac{\beta_R \, q \bar u + \gamma_R \, \bar q u}{\sigma(q^2)} & 1 \\[1.5mm]
 \wmone{u}=1 &  \wlmone{u} = \frac{\gamma_L \, q u +  \beta_L \, \bar q \bar u}{\sigma(q^2)} &   \wrmone{u} = \frac{\gamma_R \, q \bar u + \beta_R \, \bar q u}{\sigma(q^2)} &-1 \\[1.5mm]
 \wsw{u} = \frac{\sigma(q^2 u)}{\sigma(q^4)}  & \wlin{u} = \alpha_L \, \frac{\sigma(q^2u^2)}{\sigma(q^2)} & & 0 \\[1.5mm]
  \wne{u}= \frac{\sigma(q^2 u)}{\sigma(q^4)}  & \wlout{u} = \delta_L \, \frac{\sigma(q^2u^2)} {\sigma(q^2)} & & 0  \\[1.5mm]
   \wse{u}   = \frac{\sigma(q^2 \bar u)}{\sigma(q^4)} & & \wrin{u} = \alpha_R \, \frac{\sigma(q^2 \bar u^2)}{\sigma(q^2)} & 0  \\[1.5mm]
 \wnw{u} = \frac{\sigma(q^2 \bar u)}{\sigma(q^4)} & & \wrout{u} = \delta_R \, \frac{\sigma(q^2 \bar u^2)}{\sigma(q^2)} & 0 \\[1.5mm]
    \hline
\end{array}
$$
\caption{\label{weights} Vertex weights}
\end{table}
The following observations are crucial.
\begin{itemize}
\item
For bulk vertices, the weight is unchanged when reversing the orientation of all edges.
\item For bulk vertices, rotation of the configuration or the label by $90^\circ$ is equivalent to replacing the label $u$ by $\bar u$.
\item Reflection of a local configuration in the vertical symmetry axis is equivalent to replacing
$u$ by $\bar u$ and, in addition for boundary weights, interchanging $L$ and $R$ in the boundary constants.
\end{itemize}

\begin{figure}
\scalebox{0.8}{
\psfrag{1}{\large $u_1$}
\psfrag{2}{\large$u_2$}
\psfrag{3}{\large$u_3$}
\psfrag{4}{\large$u_4$}
\psfrag{5}{\large$u_5$}
\psfrag{6}{\large$u_6$}
\psfrag{12}{\large $u_1 u_2$}
\psfrag{13}{\large $u_1 u_3$}
\psfrag{14}{\large $u_1 u_4$}
\psfrag{15}{\large $u_1 u_5$}
\psfrag{16}{\large $u_1 u_6$}
\psfrag{23}{\large $u_2 u_3$}
\psfrag{24}{\large $u_2 u_4$}
\psfrag{25}{\large $u_2 u_5$}
\psfrag{26}{\large $u_2 u_6$}
\psfrag{34}{\large $u_3 u_4$}
\psfrag{35}{\large $u_3 u_5$}
\psfrag{36}{\large $u_3 u_6$}
\psfrag{45}{\large $u_4 u_5$}
\psfrag{46}{\large $u_4 u_6$}
\psfrag{56}{\large $u_5 u_6$}
\includegraphics{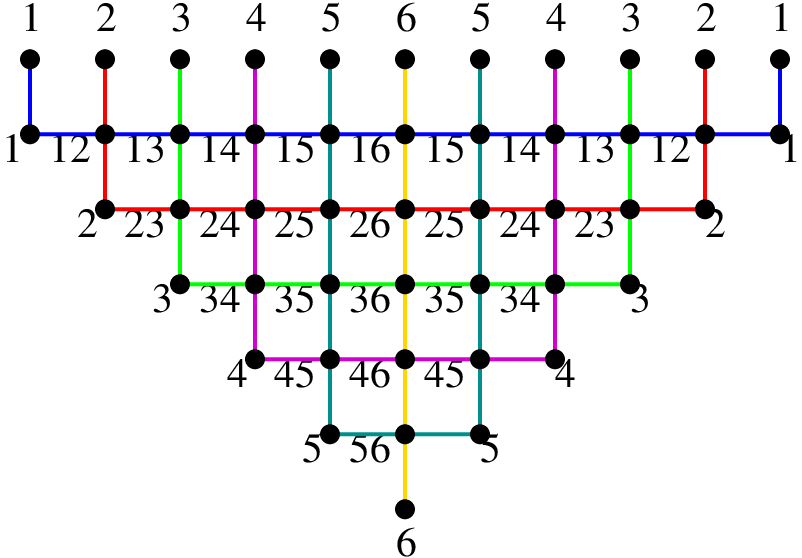}}\caption{\label{ODASASMfig} Spectral parameters.}\end{figure}

The \emph{label}  of a vertex in $\mathcal{T}_n$ is determined as follows: The path in $\mathcal{T}_n$ that is induced by the sequence of vertices corresponding to the
entries in \eqref{path} is associated with the  \emph{spectral parameter} $u_j$, see also Figure~\ref{ODASASMfig}.
The label of a vertex is then the product of the spectral parameters associated with the paths which contain that vertex. In the case of bulk vertices, there are two such paths, while the path is unique for boundary vertices.

\emph{Example.} The left configuration in Figure~\ref{implies} has the weight
\ilse{
\begin{multline*}
\delta_L \frac{\sigma(q^2 u_1^2)}{\sigma(q^2)} \cdot \frac{\sigma(q^2 u_1 u_2)}{\sigma(q^4)} \cdot \frac{\sigma(q^2 u_1 u_3)}{\sigma(q^4)} \cdot \frac{\sigma(q^2 u_1 u_4)}{\sigma(q^4)}  \cdot 1 \cdot \frac{\sigma(q^2 \bar u_1 \bar u_2)}{\sigma(q^4)}  \cdot
\delta_R \frac{\sigma(q^2 \bar u_1^2)}{\sigma(q^2)} \\ \times \frac{\beta_L q u_2 + \gamma_L \bar q \bar u_2}{\sigma(q^2)} \cdot \frac{\sigma(q^2 \bar u_2 \bar u_3)}{\sigma(q^4)} \cdot  \frac{\sigma(q^2 \bar u_2 \bar u_4)}{\sigma(q^4)} \cdot 1 \cdot
\frac{\beta_R q \bar u_2 + \gamma_R \bar q u_2}{\sigma(q^2)} \\ \times
\delta_L \frac{\sigma(q^2 u_3^2)}{\sigma(q^2)} \cdot \frac{\sigma(q^2 u_3 u_4)}{\sigma(q^4)} \cdot \frac{\beta_R q \bar u_3 + \gamma_R \bar q u_3}{\sigma(q^2)},
\end{multline*}
where rows $2,3,4$ of vertices in the graph correspond to the lines $1,2,3$ above, respectively, and the weights are read from left to right.}

\subsection{Local equations}

In this subsection, we demonstrate that the weights introduced in the previous subsection satisfy certain local equations. The most prominent local equation is the \emph{Yang--Baxter equation}, which is stated first. The schematic diagrams in this subsection have to be interpreted as follows, see for instance \eqref{ybe}. Each diagram is a graph which involves a number of external edges, for which only one endpoint is indicated with a bullet. Close to the other endpoint of such an edge, we place an $o_i$ which indicates its orientation, i.e.,``$\inw$'' or ``$\out$'', relative to the indicated endpoint of the external edge. A diagram represents the generating function of all orientations of the graph such that the external edges have the prescribed orientation, degree $4$ vertices are balanced, and the vertex weights are as given in Table~\ref{weights}, where the parameter near a vertex indicates its label.

\begin{theo}[Yang--Baxter equation, \cite{Bax82}]
\label{yang}
If $x y z = q^2$ and $o_1,o_2,\ldots,o_6 \in \{ \inw, \out \}$, then
\begin{equation}
\label{ybe}
\scalebox{0.8}{
\psfrag{x}{$x$}
\psfrag{y}{$y$}
\psfrag{z}{$z$}
\psfrag{1}{$o_6$}
\psfrag{2}{$o_5$}
\psfrag{3}{$o_4$}
\psfrag{4}{$o_3$}
\psfrag{5}{$o_2$}
\psfrag{6}{$o_1$}
\includegraphics{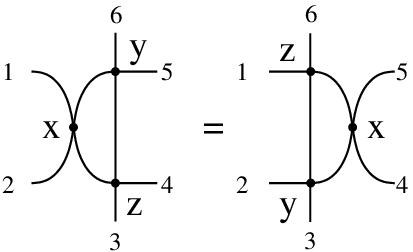}} \qquad.
\end{equation}
\end{theo}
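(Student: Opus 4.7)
The plan is to verify \eqref{ybe} by direct case analysis on the external edge orientations. Since every bulk vertex has equal indegree and outdegree, summing over the three bulk vertices on either side forces the number of incoming external edges (among $o_1, \ldots, o_6$) to equal the number of outgoing ones for any nonzero contribution. This reduces the check to the $\binom{6}{3}=20$ cases with exactly three $\inw$ among the $o_i$. The arrow-reversal symmetry of the bulk weights, recorded in the remarks after Table~\ref{weights}, then identifies each case with the one obtained by flipping every external orientation, roughly halving the work.

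For each remaining case, the two sides of \eqref{ybe} are sums over the (at most two) consistent orientations of the three internal edges on the respective triangle. Substituting the explicit weights from Table~\ref{weights} and using $xyz = q^2$ to eliminate one of the three spectral parameters, both sides become Laurent polynomials in the remaining two variables. The resulting identities all follow from the Ptolemy-type three-term relation
\[
\sigma(ab)\,\sigma(a\bar b) = \sigma(a)^2 - \sigma(b)^2,
\]
applied with appropriate $a, b$ built from $q^2 x$, $q^2 y$, $q^2 xy$ (and from $q^4$).

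The main obstacle is bookkeeping rather than any single calculation: most of the twenty cases reduce to a one-term identity on each side, but the handful in which both triangles admit two consistent internal configurations require combining two sums of products of three $\sigma$-factors into a common form, where the above Ptolemy identity is invoked. Since the bulk weights of Table~\ref{weights} coincide, up to an overall normalization, with the standard trigonometric $R$-matrix of the six-vertex model, the claim is precisely the Yang--Baxter relation established in \cite{Bax82}, and the case-by-case verification sketched here is essentially the route taken there.
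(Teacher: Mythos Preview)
Your proposal is correct and follows essentially the same route as the paper, which simply remarks that the identity ``can be proven by verifying that, for each choice of orientations of the external edges, \eqref{ybe} is either trivial or reduces to a simple identity satisfied by the bulk weights in Table~\ref{weights}.'' Your write-up supplies more detail---the in/out balance forcing $\binom{6}{3}$ cases, the arrow-reversal pairing, and the $\sigma(ab)\sigma(a\bar b)=\sigma(a)^2-\sigma(b)^2$ identity---but the underlying approach is the same case-by-case verification the paper defers to \cite{Bax82}.
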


The theorem can be proven by verifying that, for each choice of orientations of the external edges, \eqref{ybe} is either trivial or reduces to a simple identity satisfied by the bulk weights in Table~\ref{weights}.

To deal with triangular six-vertex configurations, we also need left and right
forms of the reflection equation. The reflection equation was introduced (and applied to six-vertex model bulk weights) by Cherednik~\cite[Eq.~(10)]{Che84}, with important further results being obtained by Sklyanin~\cite{Skl88}.

\begin{theo}[Left and right reflection equations]
\label{re}
Suppose $x=q^2 \bar u v, y= u  v$ and the bulk weights are as given in Table~\ref{weights}. Then
\begin{equation}
\label{lre}
\scalebox{0.8}{
\psfrag{x}{$x$}
\psfrag{y}{$y$}
\psfrag{u}{$u$}
\psfrag{v}{$v$}
\psfrag{1}{$o_1$}
\psfrag{2}{$o_2$}
\psfrag{3}{$o_3$}
\psfrag{4}{$o_4$}
\includegraphics{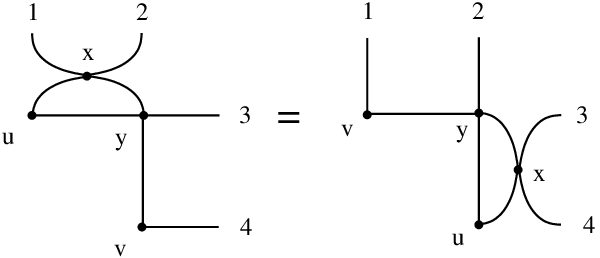}} \qquad
\end{equation}
for all $o_1,o_2,o_3,o_4 \in \{\inw,\out\}$ if and only if there exist parameters
$\alpha_L,\beta_L,\gamma_L,\delta_L$ independent of $u$ and a function $f(u)$ such that
$$
\wlone{u}= \frac{\beta_L \, q u +  \gamma_L \, \bar q \bar u}{\sigma(q^2)} f(u), \qquad
\wlmone{u}= \frac{\gamma_L \, q u +  \beta_L \, \bar q \bar u}{\sigma(q^2)} f(u),
$$
and
$$
\wlin{u} = \alpha_L \, \frac{\sigma(q^2u^2)}{\sigma(q^2)} f(u), \qquad
\wlout{u} = \delta_L \, \frac{\sigma(q^2u^2)} {\sigma(q^2)} f(u),
$$
\ilse{i.e., up to a multiplicative factor, the left boundary weights are chosen as in Table~\ref{weights}.}
Similarly, if $x=q^2 \bar u v, y= \bar u  \bar v$ and the bulk weights are chosen as in Table~\ref{weights}, then
\begin{equation}
\label{rre}
\scalebox{0.8}{
\psfrag{x}{$x$}
\psfrag{y}{$y$}
\psfrag{u}{$u$}
\psfrag{v}{$v$}
\psfrag{1}{$o_1$}
\psfrag{2}{$o_2$}
\psfrag{3}{$o_3$}
\psfrag{4}{$o_4$}
\includegraphics{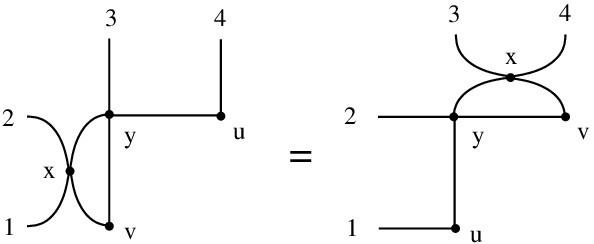}}
\end{equation}
for all $o_1,o_2,o_3,o_4 \in \{\inw,\out\}$ if and only if $\wrone{u},\wrmone{u},\wrin{u}, \wrout{u}$ are chosen---up to multiplication by a function $g(u)$---as in Table~\ref{weights}.
\end{theo}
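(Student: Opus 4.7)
The plan is to prove the left reflection equation \eqref{lre}; the right equation \eqref{rre} then follows by the vertical-reflection symmetry noted after Table~\ref{weights} (replace $u \mapsto \bar u$, $v \mapsto \bar v$ and interchange $L \leftrightarrow R$ throughout). Both sides of \eqref{lre} are generating functions over orientations of internal edges with $o_1, o_2, o_3, o_4$ held fixed, and arrow-number conservation at each interior bulk vertex forces these internal sums to be finite, so each side is a polynomial in the boundary parameters and a Laurent polynomial in $u, v, q$.

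For sufficiency, I would proceed by case analysis on $(o_1, o_2, o_3, o_4) \in \{\inw, \out\}^4$. Many of the $16$ cases are trivial because arrow conservation forces both sides to vanish identically; in the remaining cases, each side is a short sum whose terms are bilinear in the two left-boundary weights attached to the vertices carrying the labels $u$ and $v$. Since the boundary weights in Table~\ref{weights} are each linear in exactly one of the constants $\alpha_L, \beta_L, \gamma_L, \delta_L$, one may further split each equation according to which pair of these constants is involved and thereby reduce the problem to polynomial identities in $u, v, q$. These identities can be verified by elementary algebraic manipulations using $x = q^2 \bar u v$, $y = u v$ and $\sigma(x) = x - \bar x$; many are closely analogous to the bulk identities underlying the Yang--Baxter verification (Theorem~\ref{yang}), and several follow by applying the Yang--Baxter equation inside a sub-diagram of \eqref{lre}.

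For necessity, assume that \eqref{lre} holds for every choice of external orientations, with bulk weights fixed as in Table~\ref{weights}, and let $A(u), B(u), C(u), D(u)$ denote the unknown functions giving $\wlone{u}, \wlmone{u}, \wlin{u}, \wlout{u}$ respectively. I would extract from a few carefully chosen non-trivial cases a set of independent functional equations in $u$ and $v$. For example, an orientation configuration in which only $C$ and $D$ appear (all external vertical edges pointing the same way) determines the $u$-dependence of $C$ and $D$ up to a common scalar $f(u)$ and two constants that we \emph{define} to be $\alpha_L$ and $\delta_L$; a further configuration coupling $A, B$ to $C, D$, combined with the already-derived form of $C$ and $D$, then forces $A(u)/f(u)$ and $B(u)/f(u)$ to be specific linear combinations of $q u$ and $\bar q \bar u$ with constant coefficients, which are identified as $\beta_L$ and $\gamma_L$.

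The main obstacle will be the necessity direction: one must verify that the $16$-case system of functional equations is non-degenerate enough to pin down exactly the stated four-parameter, one-function form, and that no additional hidden constraints on $\alpha_L, \beta_L, \gamma_L, \delta_L$ are produced by the remaining cases. This is a concrete instance of Sklyanin's and Cherednik's classification of integrable $K$-matrices for the trigonometric six-vertex $R$-matrix, and the verification comes down to a careful bookkeeping of which orientation cases produce which linear relations among $A(u), B(u), C(u), D(u)$ at the two spectral parameters, together with a check that the joint solution set is exactly the claimed parametrization.
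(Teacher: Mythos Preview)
Your overall strategy matches the paper's: verify sufficiency by case analysis on $(o_1,o_2,o_3,o_4)$, derive necessity by extracting functional equations from a few specific orientation choices, and deduce \eqref{rre} from \eqref{lre} by the vertical-reflection symmetry. One small correction on the last point: the substitution that carries the left equation to the right is $u \mapsto \bar v$, $v \mapsto \bar u$, not $u \mapsto \bar u$, $v \mapsto \bar v$; the swap of $u$ and $v$ is needed so that $x = q^2 \bar u v$ stays invariant while $y = uv$ becomes $\bar u \bar v$.

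The more substantive issue is that your proposed order for the necessity direction is reversed in a way that matters. The orientation cases in which only $\wlin{\cdot}$ and $\wlout{\cdot}$ appear yield nothing stronger than $\wlin{u}\,\wlout{v} = \wlout{u}\,\wlin{v}$, i.e., that $\wlin{u}/\wlout{u}$ is independent of $u$. This does \emph{not} determine any specific $u$-dependence---any common profile can be absorbed into your undetermined $f(u)$---so in particular the factor $\sigma(q^2 u^2)$ cannot be extracted at this stage. That factor only emerges from the interaction with the bulk weights, and the bulk weights enter nontrivially through $\wlone{\cdot}$ and $\wlmone{\cdot}$. The paper therefore proceeds in the opposite order: the case $o_1=o_4=\inw$, $o_2=o_3=\out$ involves only $\wlone{\cdot}$ and $\wlmone{\cdot}$, and solving the resulting relation (dividing by $\wlmone{u}\wlmone{v}$ and isolating $\wlone{u}/\wlmone{u}$) pins down their form up to $f(u)$, $\beta_L$, $\gamma_L$; then the case $o_1=o_2=o_3=\inw$, $o_4=\out$ couples $\wlin{\cdot}$ to these already-known weights and forces the $\sigma(q^2 u^2)$ shape; finally the proportionality case $o_1=o_2=\inw$, $o_3=o_4=\out$ determines $\wlout{\cdot}$. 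A short separate argument handles the degenerate situation $\wlone{u}\equiv\wlmone{u}\equiv 0$. If you reorganize your necessity argument along these lines, the rest of your plan goes through.
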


The result was obtained, independently, by de Vega and
Gonz\'{a}lez-Ruiz~\cite[Eq.~(15)]{DevGon93}, and Ghoshal and Zamolodchikov~\cite[Eq.~(5.12)]{GhoZam94}. We also provide a proof below.

\begin{proof}
First we show that every solution of \eqref{lre} is of the form as given in the statement of the theorem.
Set $o_1=o_4=\inw$, $o_2=o_3=\out$ in \eqref{lre} and obtain (after subtraction of an identical term from each side)
\begin{multline}
\label{error}
\wnw{x} \wlone{u} \wlone{v} + \wne{y} \wlmone{u} \wlone{v}  \\
= \wse{x} \wlmone{u} \wlmone{v}
+ \wne{y} \wlone{u} \wlmone{v}.
\end{multline}
The equation shows in particular that $\wlone{u} \equiv 0$ if and only if
$\wlmone{u} \equiv 0$. We first assume $\wlone{u} \not\equiv 0$ and
$\wlmone{u} \not\equiv 0$. We divide \eqref{error} by $\wlmone{u} \wlmone{v}$ and deduce
$$
\frac{\wlone{u}}{\wlmone{u}} \left( \wnw{x} \frac{\wlone{v}}{\wlmone{v}} - \wne{y} \right)=
- \wne{y} \frac{\wlone{v}}{\wlmone{v}} + \wse{x}.
$$
After plugging in the definitions for the bulk weights and setting
$x=q^2 \bar u v$, $y= u v$, we can deduce that
\begin{align*}
- \wne{y} \frac{\wlone{v}}{\wlmone{v}} + \wse{x} &= \beta_L q u + \gamma_L \bar q \bar u,  \\
\wnw{x} \frac{\wlone{v}}{\wlmone{v}} - \wne{y} &= \gamma_L q u + \beta_L \bar q \bar u,
\end{align*}
with $\beta_L = \frac{\bar q \bar v}{\sigma(q^4)} - \frac{q v}{\sigma(q^4)} \frac{\wlone{v}}{\wlmone{v}}$ and $\gamma_L = \frac{\bar q \bar v}{\sigma(q^4)} \frac{\wlone{v}}{\wlmone{v}} - \frac{q v}{\sigma(q^4)}$.
Since these are both parameters independent of $u$ and at least one of them is non-zero, we can deduce that $\wlone{u}$ and $\wlmone{u}$ are of the form stated in the theorem.

In \eqref{lre}, we now choose $o_1=o_2=o_3=\inw$, $o_4=\out$ and obtain
\begin{multline*}
\wne{x} \wsw{y} \wlin{u} \wlmone{v}  \\
= \wnw{x} \wse{y}  \wlin{u} \wlmone{v} +
\wnw{x} \wlone{u} \wlin{v}  + \wsw{y} \wlmone{u} \wlin{v}.
\end{multline*}
In this identity, every quantity is known (in the case of $\wlone{u}$ and $\wlmone{u}$ up to $\beta_L, \gamma_L, f(u)$),
except for $\wlin{u}$ and $\wlin{v}$.
The identity thereby becomes
$$(\beta_L \bar v \bar q + \gamma_L q v) \left( \wlin{u} \sigma(q^2 v^2) f(v) - \wlin{v} \sigma(q^2 v^2) f(u) \right)=0$$
from which we see that $\wlin{u}$ is also of the form given in the theorem. In order to obtain the expression for $\wlout{u}$, choose
$o_1=o_2=\inw$ and $o_3=o_4=\out$ in \eqref{lre}, which gives
\begin{equation}
\label{quotient}
\wlin{u} \wlout{v} = \wlout{u} \wlin{v}.
\end{equation}

If, on the other hand, $\wlone{u} \equiv 0$ and $\wlmone{u} \equiv 0$, we choose $\beta_L=\gamma_L=0$ and the assertion also follows simply from \eqref{quotient}.

Conversely, it is straightforward to check that the solution stated in the theorem satisfies \eqref{lre}
for all $o_1,o_2,o_3,o_4 \in \{\inw,\out\}$.

The right reflection equation follows from the left reflection equation
by substituting $u \mapsto \bar v, v \mapsto \bar u$. \end{proof}

There are two additional simple rules that are necessary. For $o_1,o_2,o_3,o_4 \in
\{\inw,\out\}$, we have
\begin{equation}
\label{trivial}
\wwnes{q^2} = \delta_{o_1,\widetilde o_2} \delta_{o_3, \widetilde o_4},
\end{equation}
and
\begin{equation}
\label{rue}
\raisebox{-1.2cm}{\scalebox{0.8}{
\psfrag{1}{\large $o_1$}
\psfrag{2}{\large $o_2$}
\psfrag{3}{\large $q u$}
\psfrag{4}{\large $q \bar u$}
\includegraphics{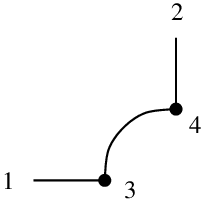}}} = \left(\wrone{q u} \wrone{q \bar u} + \wrin{q u} \wrout{q \bar u} \right) \delta_{o_1,\widetilde o_2},
\end{equation}
where $\widetilde \inw = \out$, $\widetilde \out = \inw$ and $\delta$ is the Kronecker delta.

\subsection{The (universal) partition function and its specializations}

The universal partition function is the generating function of all permissible orientations of $\mathcal{T}_n$ with respect to the weight function we have defined, that is
$$
 \sum_{C} \w(C) =:Z_n(u_1,\ldots,u_n;u_{n+1}),
$$
where the sum is over all orientations $C$ of $\mathcal{T}_n$ in which each bulk vertex has two incoming and two outgoing edges and the top edges point up.

Moreover, $Z^{\uparrow}_n(u_1,\ldots,u_n; u_{n+1})$ and $Z^{\downarrow}_n(u_1,\ldots,u_n; u_{n+1})$ denote the restrictions of this sum to configurations in which the bottom vertical edge points up or down, respectively. In \cite[Eq. (24)]{DASASM}, it was shown that the two refinements are expressible in terms of the full partition function as follows.

\begin{lem}
\label{updown}
The universal partition function and its refinements satisfy
\begin{align*}
Z^{\uparrow}_n(u_1,\ldots,u_n;u_{n+1}) = \frac{1}{2} \left( Z_n(u_1,\ldots,u_n;u_{n+1}) +
(-1)^{n} Z_n(u_1,\ldots,u_{n};-u_{n+1}) \right), \\
Z^{\downarrow}_n(u_1,\ldots,u_n;u_{n+1}) = \frac{1}{2} \left( Z_n(u_1,\ldots,u_n;u_{n+1}) -
(-1)^{n} Z_n(u_1,\ldots,u_{n};-u_{n+1}) \right).
\end{align*}
\end{lem}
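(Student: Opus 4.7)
The plan is to establish the two claimed formulas by inverting a $2\times 2$ linear system. One equation is the trivial decomposition $Z_n = Z^{\uparrow}_n + Z^{\downarrow}_n$; the other is a sign-symmetry identity
$$Z_n(u_1,\ldots,u_n;-u_{n+1}) \;=\; (-1)^n \bigl(Z^{\uparrow}_n(u_1,\ldots,u_n;u_{n+1}) - Z^{\downarrow}_n(u_1,\ldots,u_n;u_{n+1})\bigr),$$
whose proof occupies the body of the argument. Adding and subtracting these two equations, each divided by $2$, then immediately yields the lemma.

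To prove the sign-symmetry identity, I would first locate where the spectral parameter $u_{n+1}$ enters the weight $\w(C)$ of a configuration $C$. By the labelling convention of Figure~\ref{ODASASMfig}, $u_{n+1}$ is the parameter attached to the central vertical column of $\mathcal{T}_n$, so it occurs only in the labels $u_i u_{n+1}$ of the $n$ bulk vertices lying in that column; the degree-$1$ top and bottom vertices carry weight $1$ and no $u_{n+1}$-dependence. Using $\sigma(-x) = -\sigma(x)$, each of the weights $\wsw{u},\wne{u},\wse{u},\wnw{u}$ changes sign under $u \mapsto -u$, while $\wone{u} = \wmone{u} = 1$ is invariant. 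Hence, writing $k(C)$ for the number of central-column bulk vertices of type $\sw,\ne,\se,\nw$, we get
$$\w(C)\,\big|_{u_{n+1}\mapsto -u_{n+1}} \;=\; (-1)^{k(C)}\,\w(C).$$

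The crux is then to identify the parity of $k(C)$ with the orientation of the bottom vertical edge. A straightforward inspection of the six local configurations shows that $\one$- and $\mone$-vertices are precisely those at which the two vertical edges are oriented oppositely (both outgoing at a $\one$-vertex, both incoming at a $\mone$-vertex), whereas the other four configurations have the two vertical arrows in the same direction. Walking down the central column starting from the top edge (which points up by the boundary condition), the vertical direction therefore flips exactly at the $\one/\mone$-vertices. Consequently the bottom edge points up iff the number of such vertices, namely $n - k(C)$, is even, which gives $(-1)^{k(C)} = (-1)^n$ on configurations with bottom edge up and $(-1)^{k(C)} = (-1)^{n+1}$ on configurations with bottom edge down. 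Summing $\w(C)$ over all configurations yields the desired sign-symmetry identity.

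I expect the only real obstacle to be the parity argument linking $k(C)$ to the bottom edge orientation; once the case check identifying $\one/\mone$-vertices as the "flippers" of the vertical direction is carried out, the remainder of the argument is algebraic bookkeeping.
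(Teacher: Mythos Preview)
Your proposal is correct and is precisely the argument the paper has in mind: the paper does not spell out its own proof here but cites \cite[Eq.~(24)]{DASASM} and remarks that ``the argument involves only vertex weights that depend on $u_{n+1}$, and this parameter appears in none of the boundary weights,'' which is exactly your observation that $u_{n+1}$ enters only through the $n$ central-column bulk labels $u_iu_{n+1}$. Your parity count of the $\one/\mone$ ``flippers'' along the central column is the standard way to link $(-1)^{k(C)}$ to the orientation of the bottom edge, and the final $2\times2$ inversion is routine.
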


In that paper we assumed more special boundary weights, but the proof of the lemma is independent of the form of the boundary weights, since the argument involves only vertex weights that depend on $u_{n+1}$, and this parameter appears in none of the boundary weights.

Next, we introduce three specializations of the spectral parameters $u_1,\ldots,u_{n+1}$, the global parameter $q$ and the boundary parameters $\alpha_L, \beta_L, \gamma_L, \delta_L, \alpha_R, \beta_R, \gamma_R, \delta_R$, for which $Z_n(u_1,\ldots,u_n;u_{n+1})$ counts extreme $\dasasm$s in Theorems \ref{maxmone}, \ref{maxone} and \ref{maxzero}, respectively. In Sections~\ref{ast}--\ref{qoosasm}, we then derive determinant or Pfaffian expressions for these specializations of the partition function.\footnote{In a forthcoming paper \cite{DSASM}, we will study the universal partition function of diagonally symmetric $\asm$s using the weights in Table~\ref{weights}. In contrast to $Z_{n}(u_1,\ldots,u_n;u_{n+1})$, we are able to derive an expression for the universal partition function in this case.} \ilse{We then introduce a fourth specialization which applies to the case of all $\dasasm$s, as studied in \cite{DASASM}.}

First observe that if we specialize $u_j=1$ for all $j$ and $q=e^{\frac{i \pi}{6}}=\frac{\sqrt{3}+i}{2}$, then all bulk weights are equal to $1$.

\subsubsection{The $\ast$ specialization}
\label{ast-spec}
By Corollary~\ref{char} \eqref{char-1}, we need to have
\begin{gather*}
\wlone{1}=\wlin{1}=\wrone{1}=\wrin{1}=0, \\
\wlmone{1}=\wlout{1}=\wrmone{1}=\wrout{1}=1,
\end{gather*}
if $q=e^{\frac{i \pi}{6}}$. However, \ilse{in order to prove a generalization of
Theorem~\ref{maxmone}, namely Theorem~\ref{maxmone-gen}, it will be crucial to introduce an additional global parameter $p$ such that the  following is satisfied (setting $p=1$ then leads to the above conditions):}
\begin{equation}
\label{ast1g}
\wlone{p}=\wlin{p}=\wrone{p}=\wrin{p}=0,
\end{equation}
\begin{equation}
\label{ast2g}
\wlmone{p}=\wlout{p}=\frac{\sigma(p^2 q^2)}{\sigma(q^2)} \quad \text{and} \quad \wrmone{p}=\wrout{p}= \frac{\sigma(\bar p^2 q^2)}{\sigma(q^2)}.
\end{equation}
This is satisfied if we assign the following values to the boundary parameters. Note that this is true even if we do not specialize $q$. This is also the case for the two other  specializations.
$$
\begin{array}{|c|c|c|c|c|c|c|c|c|} \hline
\multirow{2}{*}{$\ast$} & \alpha_L & \beta_L & \gamma_L & \delta_L &
\alpha_R & \beta_R & \gamma_R & \delta_R \\ \cline{2-9}
& 0 & - \bar p \bar q & p q  & 1 & 0 & - p \bar q & \bar p q  & 1 \\ \hline
\end{array}
$$
The explicit boundary weights are therefore as follows.
\begin{align*}
\wlone{u}&= \frac{\sigma(p \bar u)}{\sigma(q^2)}, &\wlmone{u}&= \frac{\sigma(p q^2 u)}{\sigma(q^2)}, &\wlin{u}&= 0, &\wlout{u}&= \frac{\sigma(q^2 u^2)}{\sigma(q^2)},\\
\wrone{u}&= \frac{\sigma(\bar p u)}{\sigma(q^2)}, &\wrmone{u}&= \frac{\sigma(\bar p q^2 \bar u)}{\sigma(q^2)}, &\wrin{u}&= 0, &\wrout{u}&= \frac{\sigma(q^2 \bar u^2)}{\sigma(q^2)}
\end{align*}
We denote this specialization of the universal partition function as $Z_{n}(u_1,\ldots,u_n;u_{n+1})_{\ast}$
and record that
$$
\left. Z_{n}(1,\ldots, 1;1)_{\ast} \right|_{p=1,q=e^{\frac{i \pi}{6}}} = \left|\hspace{-0mm}\ast(n)\right|.
$$

\subsubsection{The $\qast$ specialization}
\label{dast-spec}

By Corollary~\ref{char} \eqref{char-2}, we require
\begin{gather*}
\wlmone{1}=\wlin{1}=\wrmone{1}=\wrin{1}=0, \\
\wlone{1}=\wlout{1}=\wrone{1}=\wrout{1}=1,
\end{gather*}
and the bottom vertical edge needs to point up, that is we need to work with
$Z^{\uparrow}_{n}(u_1,\ldots,u_n;u_{n+1})$. Again, we introduce a parameter $p$ that is set to
$1$ for proving Theorem~\ref{maxone}, and require
$$\wlmone{p}=\wlin{p}=\wrmone{p}=\wrin{p}=0,$$ as well as
$$\wlone{p}=\wlout{p}= \frac{\sigma(p^2 q^2)}{\sigma(q^2)} \quad \text{and} \quad
\wrone{p}=\wrout{p}=\frac{\sigma(\bar p^2 q^2)}{\sigma(q^2)}.$$
We choose the boundary parameters as follows.
$$
\begin{array}{|c|c|c|c|c|c|c|c|c|} \hline
\multirow{2}{*}{$\qast$} & \alpha_L & \beta_L & \gamma_L & \delta_L &
\alpha_R & \beta_R & \gamma_R & \delta_R \\ \cline{2-9}
& 0 & p q  & - \bar p \bar q  & 1 & 0 &  \bar p q & - p \bar q  & 1 \\ \hline
\end{array}
$$
In this case, the explicit boundary weights are
\begin{align*}
\wlone{u}&= \frac{\sigma(p q^2 u)}{\sigma(q^2)}, &\wlmone{u}&= \frac{\sigma(p \bar u)}{\sigma(q^2)}, &\wlin{u}& = 0, &\wlout{u}& = \frac{\sigma(q^2 u^2)}{\sigma(q^2)}, \\
\wrone{u}&= \frac{\sigma(\bar p q^2 \bar u)}{\sigma(q^2)}, &\wrmone{u}&= \frac{\sigma(\bar p u)}{\sigma(q^2)}, &\wrin{u}& = 0, &\wrout{u}& = \frac{\sigma(q^2 \bar u^2)}{\sigma(q^2)}.
\end{align*}
We denote this specialization of the universal partition function as $Z^{\uparrow}_{n}(u_1,\ldots,u_n;u_{n+1})_{\qast}$ and obtain that
$$
\left. Z^{\uparrow}_{n}(1,\ldots,1;1)_{\qast} \right|_{p=1,q=e^{\frac{i \pi}{6}}} = \left| \qast(n) \right|.
$$

\subsubsection{The $\qoosasm$ specialization}

\ilse{Recalling that $A \in \dasasm(2n+1)$ satisfies $\n_0(A)=2n$ if and only if all entries on the diagonals are zero except for the central entry of $A$ (see the proof of Proposition~\ref{bounds} in Section~\ref{bounds})}, we need to have
\begin{gather*}
\wlone{1}=\wlmone{1}=\wrone{1}=\wrmone{1}=0, \\
\wlin{1}=\wlout{1}=\wrin{1}=\wrout{1}=1,
\end{gather*}
and so it is natural to choose the following.
$$
\begin{array}{|c|c|c|c|c|} \hline
\multirow{2}{*}{$\qoosasm$} & \alpha_L=\alpha_R & \beta_L=\beta_R & \gamma_L=\gamma_R & \delta_L=\delta_R \\ \cline{2-5}
& 1 & 0 & 0 & 1 \\ \hline
\end{array}
$$
Here, the left boundary weights are
\begin{gather*}
\wlone{u}=\wlmone{u}=0, \quad
\wlin{u}=\wlout{u}= \frac{\sigma(q^2 u^2)}{\sigma(q^2)}, \\
\wrone{u}=\wrmone{u}=0,   \quad
 \wrin{u}=\wrout{u}=\frac{\sigma(q^2 \bar u^2)}{\sigma(q^2)}.
\end{gather*}
In this case, the left boundary weight in row $i$ of a configuration with non-zero weight is $\frac{\sigma(q^2 u_i^2)}{\sigma(q^2)}$, while the right boundary weight in row $i$ is $\frac{\sigma(q^2 \bar u_i^2)}{\sigma(q^2)}$. This implies that the universal partition function has the factor $\prod\limits_{i=1}^{n} \frac{\sigma(q^2 u_i^2) \sigma(q^2 \bar u_i^2)}{\sigma(q^2)^2}$ in this specialization. We cancel this factor and denote the reduced specialization as $Z_{n}(u_1,\ldots,u_n;u_{n+1})_{\qoosasm}$.
Therefore,
$$
\left. Z_{n}(1,\ldots,1;1)_{\qoosasm} \right|_{q=e^{\frac{i \pi}{6}}} = | \qoosasm(2n+1) |.
$$
Note that this normalization of the universal partition function could alternatively be achieved by taking the functions $f(u)$ and $g(u)$ in
Theorem~\ref{re} to be $f(u)=\sigma(q^2)/\sigma(q^2 u^2)$ and $g(u)=\sigma(q^2)/\sigma(q^2 \bar u^2)$.
Kuperberg used in \cite{KuperbergRoof} analogous specializations
to study $\operatorname{OSASM}$s and $\oosasm$s of order $4n$.

\subsubsection{The $\dasasm$ specialization}
\label{dasasm-spec}

If we require
$$\wlone{1}=\wlmone{1}=\wlin{1}=\wlout{1}=\wrone{1}=\wrmone{1}= \wrin{1}=\wrout{1}=1
$$
\ilse{in order to} count all odd-order $\dasasm$s as was done in \cite{DASASM}, then we may choose $\alpha_L=\alpha_R=\delta_L=\delta_R=1$ and
$\beta_L=\beta_R=\gamma_L=\gamma_R=\sigma(q)$. Here, the left boundary weight in row $i$ is divisible by $q u_i + \bar q \bar u_i$, while the right boundary weight is divisible by $q \bar u_i + \bar q u_i$. Thus we may divide this specialization by
$$
\prod_{i=1}^{n} \frac{\sigma(q)^2 (q u_i + \bar q \bar u_i)(q \bar u_i + \bar q u_i)}{\sigma(q^2)^2}
$$
and still obtain a Laurent polynomial in $u_1,u_2,\ldots,u_{n+1}$. (Alternatively, this normalization could be achieved by using certain choices for the functions $f(u)$ and $g(u)$ in Theorem~\ref{re}.)
This normalization was used in \cite{DASASM}.

\section{Characterization of the partition function}
\label{char-part}

\subsection{Some properties of the partition function}

\ilse{One of the main reasons for using the vertex weights as given in Table~\ref{weights} is that this choice ensures the symmetry of $Z_n(u_1,\ldots,u_n;u_{n+1})$ in $u_1,\ldots,u_n$ as discussed next. Here we rely heavily on previous work that is presented in \cite{DASASM}. At the end of the previous section (see Subsection~\ref{dasasm-spec}) we have shown that the weights in Table~\ref{weights} are (up to an irrelevant factor) generalizations of the weights used in \cite{DASASM}. In \cite[Proposition~12]{DASASM} it was demonstrated how these more special weights imply the symmetry of the partition function. However, this proof of symmetry relies only on the properties of the weights given in Theorems~\ref{yang} and \ref{re}, and since these properties also hold for the more general weights in Table~\ref{weights}, the symmetry follows also in this general setting.}
By Lemma~\ref{updown}, we \ilse{then} have symmetry also for the two refined partition functions.

\begin{theo}[Symmetry]
\label{sym}
The partition function $Z_{n}(u_1,\ldots,u_n;u_{n+1})$, and its two refinements
$Z_n^{\uparrow}, Z_n^{\downarrow}$
are symmetric in $u_1,u_2,\ldots,u_n$.
\end{theo}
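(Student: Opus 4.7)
My plan is to follow the standard ``train argument'' from the six-vertex literature, which is exactly the route taken in \cite[Proposition~12]{DASASM} for the $\dasasm$ specialization, and observe that nothing in the argument used specific properties of those boundary weights beyond the Yang--Baxter equation and the reflection equations---both of which were shown in Theorems~\ref{yang} and \ref{re} to hold for the general weights of Table~\ref{weights}.

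First I would reduce to proving invariance under each adjacent transposition $u_i \leftrightarrow u_{i+1}$ for $i \in \{1,\ldots,n-1\}$, since such transpositions generate $S_n$. Pictorially, the path carrying spectral parameter $u_i$ enters at the top in column $i$, runs down the diagonal to row $i$, turns right and runs across row $i$ to column $2n+2-i$, then turns up and exits at the top; the same holds for $u_{i+1}$ in columns $i+1$ and $2n+1-i$, so the two paths share every bulk vertex in common positions. To exchange their labels, I would insert an auxiliary degree-$4$ vertex with label $u_i u_{i+1}$ just above the top of the triangle, at a position where by \eqref{trivial} (evaluated at argument $q^2$) it contributes only a Kronecker factor that is compatible with the constraint that top edges point up---so inserting it leaves $Z_n$ unchanged.

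Next I would push this auxiliary vertex down and through the triangular lattice along the paths of $u_i$ and $u_{i+1}$: through bulk crossings using the Yang--Baxter equation \eqref{ybe}, through the left-boundary vertices using the left reflection equation \eqref{lre}, across the bottom via further applications of the Yang--Baxter equation, and through the right-boundary vertices using the right reflection equation \eqref{rre}. Each local move preserves the partition function as a consequence of Theorems~\ref{yang} and \ref{re}. When the auxiliary vertex eventually re-emerges near the top on the right side, it can be removed by another application of \eqref{trivial} (or, when it combines with a boundary vertex in the central column at the bottom, by the rule \eqref{rue}), and the net effect of this entire procedure is precisely to interchange the labels $u_i$ and $u_{i+1}$ on the two paths. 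Hence $Z_n(u_1,\ldots,u_n;u_{n+1}) = Z_n(u_1,\ldots,u_{i-1},u_{i+1},u_i,u_{i+2},\ldots,u_n;u_{n+1})$.

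For the refinements, symmetry follows immediately from Lemma~\ref{updown}: both $Z_n^{\uparrow}$ and $Z_n^{\downarrow}$ are half-sums or half-differences of $Z_n(u_1,\ldots,u_n;u_{n+1})$ and $Z_n(u_1,\ldots,u_n;-u_{n+1})$, and an adjacent transposition of $u_i,u_{i+1}$ with $i<n$ affects neither of these two full partition functions, by the case just proved. The step I expect to be the main obstacle---or at least the step requiring the most care---is verifying that the train argument closes up correctly at the two top corners and at the central bottom vertex of $\mathcal{T}_n$; one must check that the auxiliary vertex can indeed be introduced and removed at the appropriate boundary locations using \eqref{trivial} and \eqref{rue}, and that the orientations of the forced top edges are compatible with these local identities. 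This bookkeeping is purely combinatorial once the local equations are in hand, and it is precisely the geometric content already handled in \cite{DASASM}.
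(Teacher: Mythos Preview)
Your approach is essentially the same as the paper's: both defer the argument to \cite[Proposition~12]{DASASM} and observe that, since Theorems~\ref{yang} and~\ref{re} hold for the general weights of Table~\ref{weights}, the train argument there carries over verbatim; and both obtain the symmetry of $Z_n^{\uparrow}, Z_n^{\downarrow}$ from Lemma~\ref{updown}. A couple of the specific details in your sketch are slightly off---the auxiliary crossing carries a label of the form $q^2\bar u_i u_{i+1}$ rather than $u_i u_{i+1}$ (so \eqref{trivial} does not apply as stated), and for $i<n$ the crossing never reaches the bottom vertex so \eqref{rue} is not needed---but you correctly flag this bookkeeping as the delicate part already handled in \cite{DASASM}.
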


Moreover, we have the following invariance.

\begin{prop}
\label{inv}
The partition function $Z_{n}(u_1,\ldots,u_n;u_{n+1})$ and its two refinements $Z_n^{\uparrow}, Z_n^{\downarrow}$ are
invariant when simultaneously replacing
$(u_1,\ldots,u_{n+1})$ with $(\bar u_1, \ldots, \bar u_{n+1})$
and interchanging left and right boundary constants, i.e.,
$\alpha_L \leftrightarrow \alpha_R, \beta_L \leftrightarrow \beta_R, \gamma_L \leftrightarrow \gamma_R, \delta_L \leftrightarrow \delta_R$.
\end{prop}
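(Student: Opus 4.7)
The plan is to leverage the reflection symmetry of $\mathcal{T}_n$ about its vertical axis, together with the third bulleted observation after Table~\ref{weights}, which states that reflecting any local configuration in the vertical axis is equivalent to replacing the label $u$ by $\bar u$ and, for a boundary vertex, additionally interchanging $L$ and $R$ in the boundary constants.

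First I would let $\phi:\mathcal{T}_n\to\mathcal{T}_n$ denote the reflection across the vertical axis of the graph and verify that $C\mapsto\phi(C)$ is a bijection on the set of orientations of $\mathcal{T}_n$ satisfying the boundary requirement that the top vertical edges point upward. This holds because every top edge is vertical, so its orientation is preserved by $\phi$, and the six-vertex (balanced) condition at each bulk vertex is invariant under reflection. Moreover, since the bottom vertical edge lies on the axis of reflection, its orientation is preserved by $\phi$, so the bijection restricts to each of the subsets of configurations counted by $Z_n^{\uparrow}$ and $Z_n^{\downarrow}$.

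Next I would observe from Figure~\ref{ODASASMfig} that each spectral parameter $u_j$ is carried by a path which enters at the top of column $j$, turns in row $j$, and exits at the top of column $2n+2-j$; such a path is manifestly $\phi$-invariant. Hence the label assigned to each vertex $v$ coincides with the label assigned to $\phi(v)$. Now, after simultaneously replacing $(u_1,\ldots,u_{n+1})$ by $(\bar u_1,\ldots,\bar u_{n+1})$ and interchanging the left and right boundary constants, the third observation recalled above ensures vertex by vertex that the resulting weight at $\phi(v)$ in $\phi(C)$ equals the original weight at $v$ in $C$. Taking the product over all vertices of $\mathcal{T}_n$ and summing over $C$ then yields the desired identity $Z_n(\bar u_1,\ldots,\bar u_{n+1};R,L)=Z_n(u_1,\ldots,u_{n+1};L,R)$, together with its two refined versions.

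The only subtle point, which is precisely what the cited observation is designed to absorb, is the south-west corner convention for the labels of bulk vertices: under $\phi$ the label would naively appear in the south-east corner, so one needs a rotation together with $u\mapsto\bar u$ in order to read off the weight in the standard form from Table~\ref{weights}. Once this is encapsulated in the vertex-by-vertex identity, the proof reduces to a collection of elementary checks (such as $\wsw{u}=\wse{\bar u}$ and $\wlone{u}=\wrone{\bar u}$ after $L\leftrightarrow R$) that are already implicit in the bulleted observations following Table~\ref{weights}.
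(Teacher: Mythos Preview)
Your proposal is correct and follows essentially the same approach as the paper's proof, which is a one-sentence sketch invoking the vertical reflection of configurations together with the observation after Table~\ref{weights}. Your write-up simply unpacks this in more detail---making explicit the bijection $\phi$, the $\phi$-invariance of the spectral-parameter paths (and hence of the labels), and the preservation of the bottom edge orientation needed for the refined versions---all of which are implicit in the paper's argument.
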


\begin{proof}
This follows from reflecting the configuration in the vertical symmetry axis as the reflection causes for each individual vertex the replacement of the label by its reciprocal and, for boundary weights, the interchanging of respective left and right boundary constants \ilse{(which is precisely the fact displayed at the third bullet point after Table~\ref{weights})}.
\end{proof}

Finally, $Z_n, Z^{\uparrow}_n, Z^{\downarrow}_n$ are Laurent polynomials in $u_1,u_2,\ldots,u_{n+1}$ that are even in each $u_i$, $i=1,\ldots,n$.
We need the following notion: Suppose $\sum_{i=o}^{d} a_i u^i$ is a Laurent polynomial in $u$, and $a_o \not=0, a_d \not=0$. Then $d$ is the \emph{degree} of the Laurent polynomial and $o$ is its \emph{order}.

\begin{prop}
\label{even}
\begin{enumerate}
\item\label{even-1}
For each $i=1,2,\ldots,n$, the partition function $Z_{n}(u_1,\ldots,u_n;u_{n+1})$ and its two refinements $Z_n^{\uparrow}, Z_n^{\downarrow}$ are even in $u_i$.
\item\label{even-2}
  The partition function $Z_{n}(u_1,\ldots,u_n;u_{n+1})$ and its two refinements $Z_n^{\uparrow}, Z_n^{\downarrow}$  are Laurent polynomials in $u_1,\ldots,u_{n+1}$ of degree at most $n$ and order at least $-n$ in $u_{n+1}$, and of degree at most $2n+2$ and order at least $-2n-2$ in $u_{i}$, for each $i=1,2,\ldots,n$.
\end{enumerate}
\end{prop}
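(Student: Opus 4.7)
I would begin by observing that, for each configuration $C$, the weight $\w(C)$ factors over vertices whose individual dependence on a spectral parameter is determined by Table~\ref{weights} together with the distribution of labels shown in Figure~\ref{ODASASMfig}. For part~\eqref{even-1}, by Theorem~\ref{sym} and Lemma~\ref{updown} it suffices to prove evenness of $Z_n$ in $u_1$. From Table~\ref{weights} one sees that each vertex weight is either \emph{even} or \emph{odd} as a Laurent polynomial in its label $u$: the six weights $\wone{u}, \wmone{u}, \wlin{u}, \wlout{u}, \wrin{u}, \wrout{u}$ are even (constants or proportional to $\sigma(q^2 u^2)$), while the remaining eight---the type-$0$ bulk weights $\wsw{u}, \wne{u}, \wse{u}, \wnw{u}$ and the type-$\pm 1$ boundary weights $\wlone{u}, \wlmone{u}, \wrone{u}, \wrmone{u}$---are odd. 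Only vertices on path $u_1$ have a label involving $u_1$: the two boundary vertices $a_{1,1}, a_{1,2n+1}$ carry the label $u_1$, while the remaining $2n-1$ row-$1$ bulk vertices carry labels of the form $u_1 u_k$; in either case $u_1 \mapsto -u_1$ negates the label. Thus $\w(C)|_{u_1 \mapsto -u_1} = (-1)^{N(C)}\,\w(C)$, where $N(C)$ counts the odd-weight vertices on path $u_1$, namely the bulk vertices corresponding to a $0$-entry together with the boundary vertices corresponding to a $\pm 1$-entry. A direct count gives
\begin{equation*}
N(C) + M(C) \equiv 2n-1 \equiv 1 \pmod{2},
\end{equation*}
where $M(C)$ is the number of $\pm 1$-entries along path $u_1$. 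By \eqref{path} with $j=1$, the non-zero entries of row~$1$ alternate and sum to $1$, so $M(C)$ is odd and $N(C)$ is even. Summing over $C$ proves evenness of $Z_n$---and, via Lemma~\ref{updown}, of $Z_n^\uparrow$ and $Z_n^\downarrow$---in $u_1$.

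For part~\eqref{even-2}, every bulk weight in Table~\ref{weights} is a Laurent polynomial in its label of degree at most $1$ and order at least $-1$, and every boundary weight is of degree at most $2$ and order at least $-2$. For $i \le n$, the only vertices whose weight depends on $u_i$ lie on path $u_i$; these are two boundary vertices (with label $u_i$) and $2n-1$ bulk vertices (with labels of the form $u_i u_k$). Hence their joint contribution to the degree (respectively, order) in $u_i$ is at most $2\cdot 2 + (2n-1)\cdot 1 = 2n+3$ (respectively, at least $-(2n+3)$). Combined with the evenness from part~\eqref{even-1}, the odd extreme $\pm(2n+3)$ is excluded, so the degree is at most $2n+2$ and the order at least $-(2n+2)$ in $u_i$. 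For $u_{n+1}$, only the $n$ central-column bulk vertices $a_{1,n+1}, \ldots, a_{n,n+1}$ (each with label $u_i u_{n+1}$) depend on $u_{n+1}$, the bottom vertex $a_{n+1,n+1}$ contributing a weight independent of $u_{n+1}$; summing gives degree at most $n$ and order at least $-n$ in $u_{n+1}$. The refinements $Z_n^\uparrow$ and $Z_n^\downarrow$ inherit all of these bounds via Lemma~\ref{updown}.

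The principal subtlety lies in the mod-$2$ bookkeeping in part~\eqref{even-1}: correctly identifying which of the fourteen weights in Table~\ref{weights} are odd under $u \mapsto -u$, and then forcing the parity of their count on path $u_1$ using the simple combinatorial constraint that row~$1$ of an odd $\dasasm$-triangle contains an odd number of $\pm 1$-entries. Once this parity argument is in place, the degree and order bounds in part~\eqref{even-2} reduce to a direct vertex-by-vertex count.
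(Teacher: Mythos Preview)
Your proof is correct, and while it reaches the same conclusions as the paper's, the routes differ.

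For part~\eqref{even-1}, the paper exploits the stronger structural fact that row~$1$ of an odd $\dasasm$-triangle contains a \emph{unique}~$1$ (and no $-1$'s), splits into three cases according to whether this $1$ lies on the left boundary, the right boundary, or in the interior, and in each case writes out the explicit product of row-$1$ weights, observing directly that it is even in~$u_1$. Your argument replaces this case analysis by a uniform parity count: you classify the fourteen weights as even or odd under $u\mapsto -u$, observe that the odd-weight vertices on path~$u_1$ are precisely the bulk $0$-vertices together with the boundary $\pm1$-vertices, and verify $N(C)\equiv (2n-1)-M(C)\pmod 2$; since $M(C)$ is odd (alternating row summing to~$1$), $N(C)$ is even. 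This is neat and uses only the weaker fact that row~$1$ has an odd number of nonzero entries.

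For part~\eqref{even-2}, the paper's explicit products immediately give degree at most $2n+2$ in~$u_1$ (e.g., with the $1$ interior one has two boundary weights of degree~$2$ each, one bulk $\pm1$-weight of degree~$0$, and $2n-2$ bulk $0$-weights of degree~$1$, totalling $2n+2$). Your naive count gives $2n+3$, which you then reduce to $2n+2$ by invoking the evenness from part~\eqref{even-1}. Both are valid; the paper's bound is sharper in the intermediate step but relies on the case split, whereas yours is a one-line count followed by a parity correction. The $u_{n+1}$ bound is handled identically in both arguments.

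One minor remark: your appeal to Lemma~\ref{updown} for the refinements is correct but unnecessary---since you have shown each individual $\w(C)$ is even in~$u_1$, the same holds for any subsum, including $Z_n^\uparrow$ and $Z_n^\downarrow$.
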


\begin{proof}
\emph{\eqref{even-1}:}
By Theorem~\ref{sym}, it suffices to show the assertion for $i=1$.

We show that the contribution of each configuration to the partition function is even in $u_1$. Distinguish between the cases whether or not
the unique $1$ in the top row of the $\dasasm$ is on the (left or right) boundary. Clearly, only the weights of the top bulk vertices and the two top boundary vertices can involve the spectral parameter $u_1$, and if the $1$ is on the left boundary, then the contribution of the first row to the weight  is$$
\frac{\beta_L \, q u_1 +  \gamma_L \, \bar q \bar u_1}{\sigma(q^2)}
\frac{\delta_R \sigma(q^2 \bar u_1^2)}{\sigma(q^2)}
\frac{\sigma(q^2 \bar u_1 \bar u_{n+1})}{\sigma(q^4)}\prod_{i=2}^{n} \frac{\sigma(q^2 \bar u_1 \bar u_i)^2}{\sigma(q^4)^2},
$$
while the contribution is
$$
\frac{\delta_L \sigma(q^2 u_1^2)}{\sigma(q^2)}
\frac{\beta_R q \bar u_1 + \gamma_R \bar q u_1}{\sigma(q^2)}
\frac{\sigma(q^2 u_1  u_{n+1})}{\sigma(q^4)}\prod_{i=2}^{n} \frac{\sigma(q^2 u_1 u_i)^2}{\sigma(q^4)^2},
$$
if the $1$ is on the right boundary. Both expressions are obviously even in $u_1$.
If the unique $1$ in the top row is not on the boundary, then the contribution of the top bulk vertices and the two top boundary vertices is the product of $\frac{\delta_L \sigma(q^2 u_1^2)}{\sigma(q^2)}  \frac{\delta_R \sigma(q^2 \bar u_1^2)}{\sigma(q^2)} $ and an even number ($=2n-2$) of $\frac{\sigma(q^2 u_1 u_i)}{\sigma(q^4)}$'s and $\frac{\sigma(q^2 \bar u_1 \bar u_i)}{\sigma(q^4)}$'s, $i=2,3,\ldots,n+1$,
and this product is also even in $u_1$.

\emph{\eqref{even-2}:} The bounds for the degree and order in $u_i$, $i=1,2,\ldots,n$, follow from the discussion in the proof of \eqref{even-1}.  As for the degree and order in $u_{n+1}$, note that there are precisely $n$ vertices of $\mathcal{T}_n$ whose weight can involve $u_{n+1}$ (i.e., the bulk vertices of the central column). The degree of the weight in $u_{n+1}$ of such a vertex is at most $1$ and the order is at least $-1$.
\end{proof}

\subsection{Characterization through evaluations in $u_{n+1}$}

In the following lemma we show that the evaluation of the universal partition function
$Z_n(u_1,\ldots,u_n; u_{n+1})$ at $u_{n+1} = q^2 \bar u_1$ is expressible in terms of the order $n-1$ universal partition function.

\begin{lem}
\label{eval}
For any $n=1,2,3,\ldots$,
\begin{align}
\nonumber
Z_n(u_1,& \ldots,u_n; q^2 \bar u_1) \\
\nonumber
&= \frac{1}{2}
\bigg\{ \left[ \wrone{u_1}+ \wrout{u_1}+\wrin{u_1} +  \wrmone{u_1} \right]
Z_{n-1}(u_2,\ldots,u_n; u_1)  \\
\label{full}
 &  \,\,\, +  (-1)^{n+1} \left[ \wrone{u_1}+ \wrout{u_1}- \wrin{u_1} -  \wrmone{u_1} \right]  Z_{n-1}(u_2,\ldots,u_n; - u_1) \bigg\} \Omega_n,
\end{align}
where
$$
\Omega_n = \wlout{u_1}
\prod_{i=2}^{n} \wne{u_1 u_i} \wne{q^2 \bar u_1 u_i}.
$$
Concerning the two refinements, we have
\begin{align}
\label{up}
Z^{\uparrow}_{n}(u_1,\ldots,u_n; q^2 \bar u_1)
&=
 \left(  \wrone{u_1} Z^{\uparrow}_{n-1}(u_2,\ldots,u_n; u_1) + \wrin{u_1} Z^{\downarrow}_{n-1}(u_2,\ldots,u_n;  u_1) \right) \Omega_n, \\
\label{down}
Z^{\downarrow}_{n}(u_1,\ldots,u_n; q^2 \bar u_1)
 & =
\left( \wrout{u_1} Z^{\uparrow}_{n-1}(u_2,\ldots,u_n; u_1)
  +  \wrmone{u_1} Z^{\downarrow}_{n-1}(u_2,\ldots,u_n;  u_1) \right) \Omega_n.
\end{align}

\end{lem}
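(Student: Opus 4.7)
The approach exploits the freezing that occurs when $u_{n+1} = q^2 \bar u_1$: with this specialization, the bulk vertex at position $(1, n+1)$ carries the label $u_1 u_{n+1} = q^2$, and by equation~\eqref{trivial} its weight factorizes into two independent constraints on edge orientations (the west edge is opposite to the north edge, and the east edge is opposite to the south edge). Because the north (top vertical) edge must point up by the boundary condition of $\mathcal{T}_n$, the west edge is forced to point into the vertex, while the south and east orientations are coupled but not individually fixed. In particular, the vertex at $(1, n+1)$ behaves as a trivial crossing of weight one.

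The key technical step is to propagate this freezing algebraically through the rest of the configuration. I would iteratively apply the Yang--Baxter equation (Theorem~\ref{yang}) to push the rapidity $u_1$ down through the vertices of the central column, starting from the trivial crossing at $(1, n+1)$. Each application of Yang--Baxter at the intersection of the $u_1$-rapidity with a row rapidity $u_i$ produces a factor of $\wne{u_1 u_i}\wne{u_{n+1} u_i} = \wne{u_1 u_i}\wne{q^2 \bar u_1 u_i}$ from the pair of nontrivial $R$-matrices involved, while the trivial $R$-matrix at label $q^2$ effectively swaps the roles of the $u_1$- and $u_{n+1}$-rapidities. Simultaneously, the left-boundary vertex $(1,1)$ is handled by an application of the left reflection equation (Theorem~\ref{re}), which contributes $\wlout{u_1}$. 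Together these manipulations produce the product $\Omega_n$.

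After this process, the remaining part of the configuration is an $(n-1)$-order triangular six-vertex configuration with row parameters $u_2, \ldots, u_n$ and central-column parameter $u_1$; the reassignment $u_{n+1} \mapsto u_1$ is a direct consequence of the Yang--Baxter-driven swap built into the propagation. What remains is the summation over the type of the right-boundary vertex at $(1, 2n+1)$ with label $u_1$: types $\rone$ and $\rin$ are compatible with an upward original bottom edge (corresponding respectively to $Z^{\uparrow}_{n-1}$ and $Z^{\downarrow}_{n-1}$ in the reduced configuration), while types $\rout$ and $\rmone$ are compatible with a downward original bottom edge. This yields the refinements~\eqref{up} and~\eqref{down} directly. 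Identity~\eqref{full} is then obtained by adding \eqref{up} and \eqref{down} and applying Lemma~\ref{updown} to express $Z^{\uparrow}_{n-1}(u_2,\ldots,u_n;u_1)$ and $Z^{\downarrow}_{n-1}(u_2,\ldots,u_n;u_1)$ in terms of $Z_{n-1}$ evaluated at $\pm u_1$; the sign $(-1)^{n-1} = (-1)^{n+1}$ from Lemma~\ref{updown} produces the stated coefficient.

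The hardest part of making this rigorous is to verify that the iterated Yang--Baxter propagation terminates cleanly at the desired $(n-1)$-order partition function, with all residual weights accounted for by $\Omega_n$ and the right-boundary summation, and to confirm that the parity of the bottom-edge orientation cascade matches the split of the four right-boundary types. An alternative combinatorial route would track directly, starting from the forced orientations at $(1,n+1)$, the cascade along the left half of the top row---forcing each $(1,i)$ for $i=2,\ldots,n$ into type $\ne$ and the left boundary into type $\lout$---and independently verify that the Yang--Baxter-generated central column factors coincide with $\prod_{i=2}^{n}\wne{q^2\bar u_1 u_i}$; the two routes should agree.
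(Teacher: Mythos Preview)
Your overall strategy matches the paper's: the $q^2$-labelled vertex at the top of the central column trivializes via~\eqref{trivial}, part of the top row is forced, Yang--Baxter together with a reflection equation reduce the remainder to an order-$(n-1)$ partition function with central parameter $u_1$, and then~\eqref{full} follows from~\eqref{up},~\eqref{down} and Lemma~\ref{updown}. However, the specific mechanics are scrambled in two places.

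First, the factor $\wlout{u_1}\prod_{i=2}^n\wne{u_1u_i}$ does not come from the left reflection equation. It comes from the purely combinatorial cascade that you present as an ``alternative route'': once the west edge of the trivial vertex is forced, every bulk vertex in the left half of the top row is forced to type $\ne$ and the left-boundary vertex to type $\lout$. In the paper this is the primary (indeed the only) way the left half is handled; no reflection equation is applied on the left. Second, the Yang--Baxter/reflection step does not ``push $u_1$ down the central column''. After the forced left half is deleted, the surviving right half of the former top row is threaded diagonally through the bulk by repeated Yang--Baxter moves and then past the \emph{right} boundary via the right reflection equation~\eqref{rre}; each row $i=2,\ldots,n$ traversed contributes a factor $\wne{u_i u_{n+1}}=\wne{q^2\bar u_1 u_i}$ when the crossing is untangled at the top. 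At the end the right-boundary vertex with label $u_1$ sits adjacent to the bottom of the reduced triangle, and summing over its four types yields~\eqref{up} and~\eqref{down} exactly as you describe. So the reflection equation actually invoked is the right one, not the left.
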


\begin{proof} We prove \eqref{up} and illustrate the proof with the help of the case $n=4$. By definition, the left-hand side is the generating function of the following graph where $u_{n+1} = q^2 \bar u_1$.
\begin{center}
\scalebox{0.4}{
\psfrag{1}{\huge $u_1$}
\psfrag{2}{\huge $u_2$}
\psfrag{3}{\huge $u_3$}
\psfrag{4}{\huge $u_4$}
\psfrag{12}{\huge $u_1 u_2$}
\psfrag{13}{\huge $u_1 u_3$}
\psfrag{23}{\huge $u_2 u_3$}
\psfrag{14}{\huge $u_1 u_4$}
\psfrag{24}{\huge $u_2 u_4$}
\psfrag{34}{\huge $u_3 u_4$}
\psfrag{15}{\huge $u_1 u_5$}
\psfrag{25}{\huge $u_2 u_5$}
\psfrag{35}{\huge $u_3 u_5$}
\psfrag{45}{\huge $u_4 u_5$}
\includegraphics{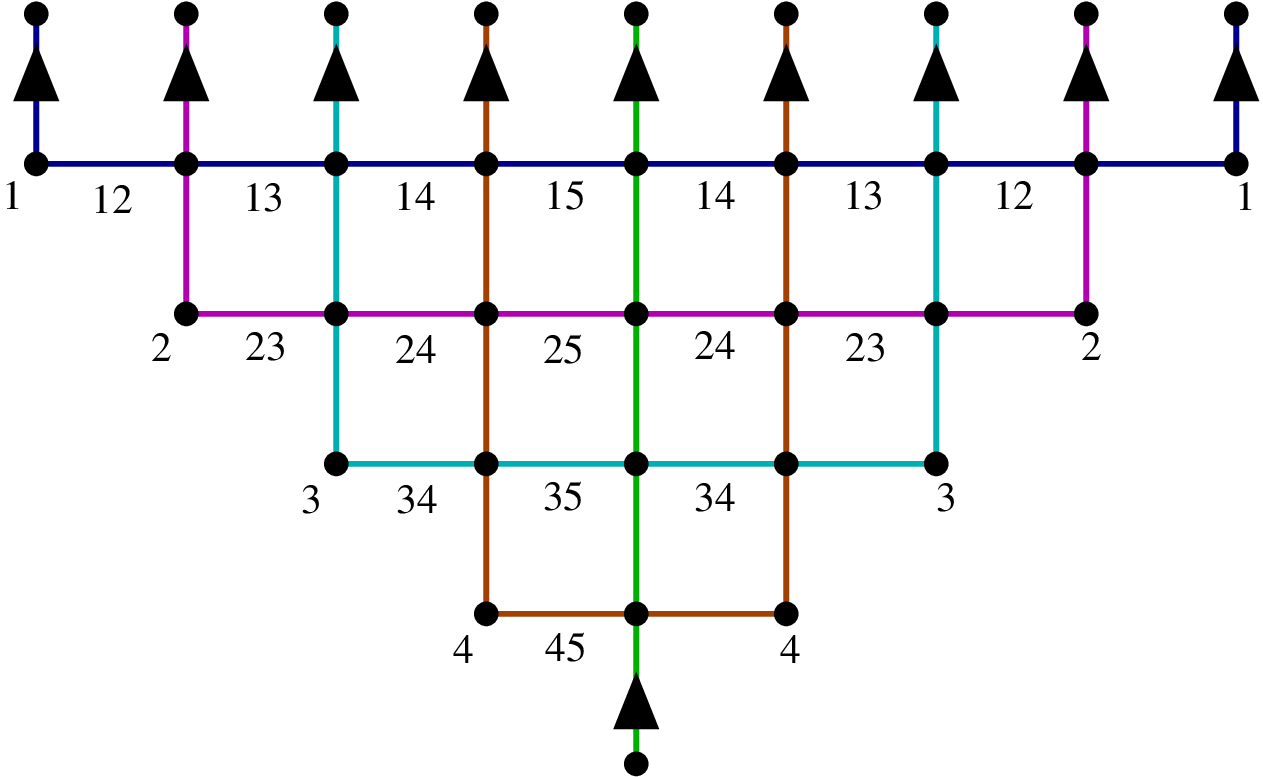}}
\end{center}
The label of the top bulk vertex in the central column is $u_1 u_{n+1} = q^2$. Using \eqref{trivial} \ilse{at this vertex} and observing that the local configurations in the left half of the top row are now forced, we see that this is equal to the generating function of the following graph.
\begin{center}
\scalebox{0.4}{
\psfrag{1}{\huge $u_1$}
\psfrag{2}{\huge $u_2$}
\psfrag{3}{\huge $u_3$}
\psfrag{4}{\huge $u_4$}
\psfrag{12}{\huge $u_1 u_2$}
\psfrag{13}{\huge $u_1 u_3$}
\psfrag{23}{\huge $u_2 u_3$}
\psfrag{14}{\huge $u_1 u_4$}
\psfrag{24}{\huge $u_2 u_4$}
\psfrag{34}{\huge $u_3 u_4$}
\psfrag{15}{\huge $u_1 u_5$}
\psfrag{25}{\huge $u_2 u_5$}
\psfrag{35}{\huge $u_3 u_5$}
\psfrag{45}{\huge $u_4 u_5$}
\includegraphics{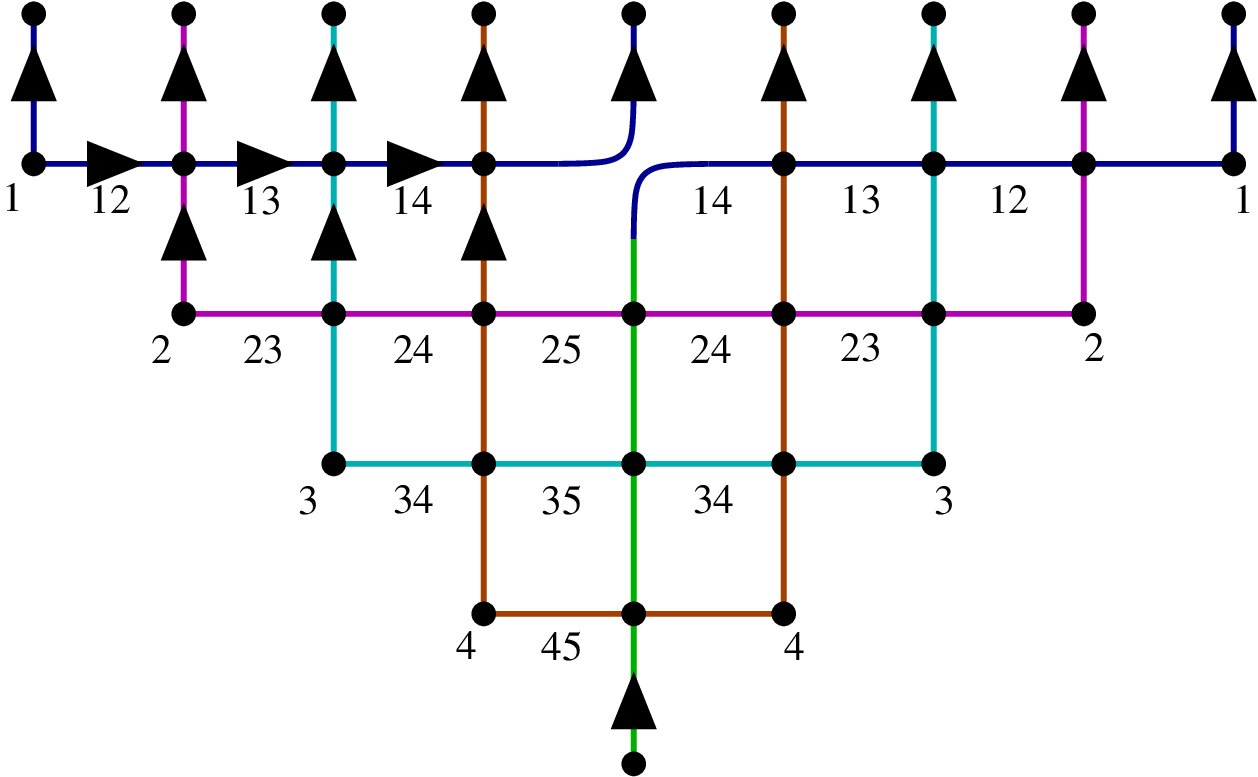}}
\end{center}
The $n$ vertices with fixed local configuration in the left half of the top row contribute
$$
\wlout{u_1} \prod_{i=2}^{n} \wne{u_1 u_i}
$$
to the weight. We delete these vertices and move the right half of the graph down.
\begin{center}
\scalebox{0.4}{
\psfrag{1}{\huge $u_1$}
\psfrag{2}{\huge $u_2$}
\psfrag{3}{\huge $u_3$}
\psfrag{4}{\huge $u_4$}
\psfrag{12}{\huge $u_1 u_2$}
\psfrag{13}{\huge $u_1 u_3$}
\psfrag{23}{\huge $u_2 u_3$}
\psfrag{14}{\huge $u_1 u_4$}
\psfrag{24}{\huge $u_2 u_4$}
\psfrag{34}{\huge $u_3 u_4$}
\psfrag{15}{\huge $u_1 u_5$}
\psfrag{25}{\huge $u_2 u_5$}
\psfrag{35}{\huge $u_3 u_5$}
\psfrag{45}{\huge $u_4 u_5$}
\includegraphics{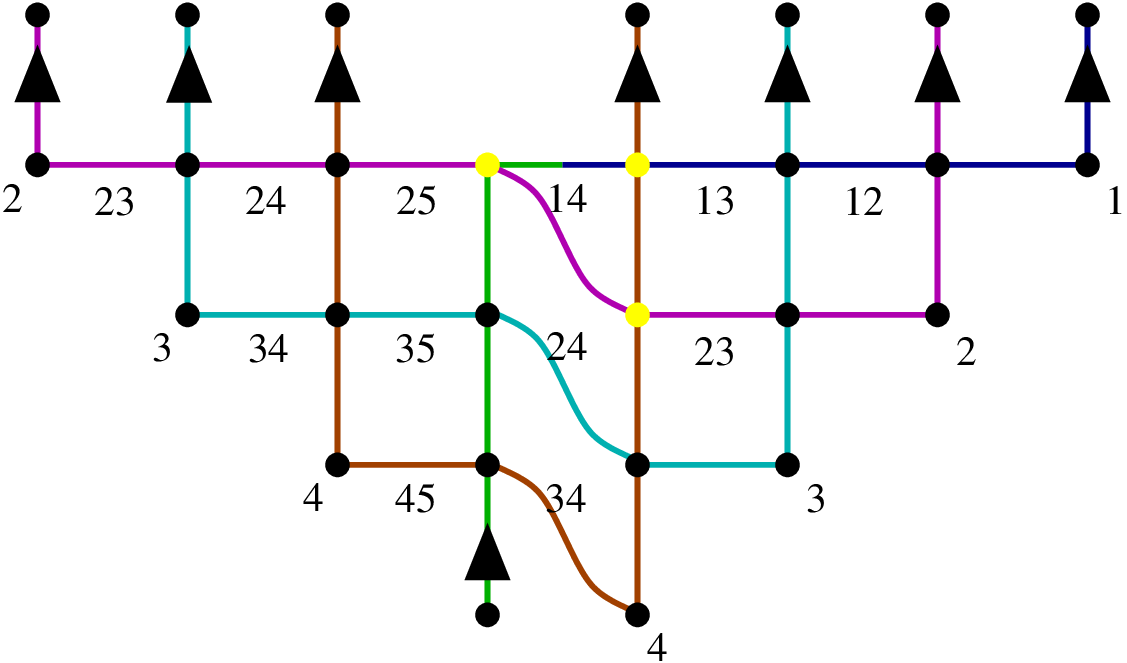}}
\end{center}
Now we apply the Yang--Baxter equation \eqref{ybe} to the vertices with label $u_2 u_{n+1}$, $u_1 u_n$ and to the right vertex with label $u_2 u_n$ (indicated with yellow in our example). This is permissible since $u_{n+1} = q^2 \bar u_1$. We repeatedly apply the Yang--Baxter equation until we reach the right boundary, and then apply the right reflection equation \eqref{rre}. We then untangle the crossing and gain the
factor $\wsw{u_2 u_{n+1}} = \wne{u_2 u_{n+1}}=\w_2$. In our example, the procedure is as follows.
\begin{center}
\scalebox{0.35}{
\psfrag{1}{\huge $u_1$}
\psfrag{2}{\huge $u_2$}
\psfrag{3}{\huge $u_3$}
\psfrag{12}{\huge $u_1 u_2$}
\psfrag{13}{\huge $u_1 u_3$}
\psfrag{23}{\huge $u_2 u_3$}
\psfrag{14}{\huge $u_1 u_4$}
\psfrag{24}{\huge $u_2 u_4$}
\psfrag{34}{\huge $u_3 u_4$}
\psfrag{4}{\huge $u_4$}
\psfrag{15}{\huge $u_1 u_5$}
\psfrag{25}{\huge $u_2 u_5$}
\psfrag{35}{\huge $u_3 u_5$}
\psfrag{45}{\huge $u_4 u_5$}
\psfrag{=}{\Huge $=$}
\psfrag{= W}{\Huge $=\w_2$}
\includegraphics{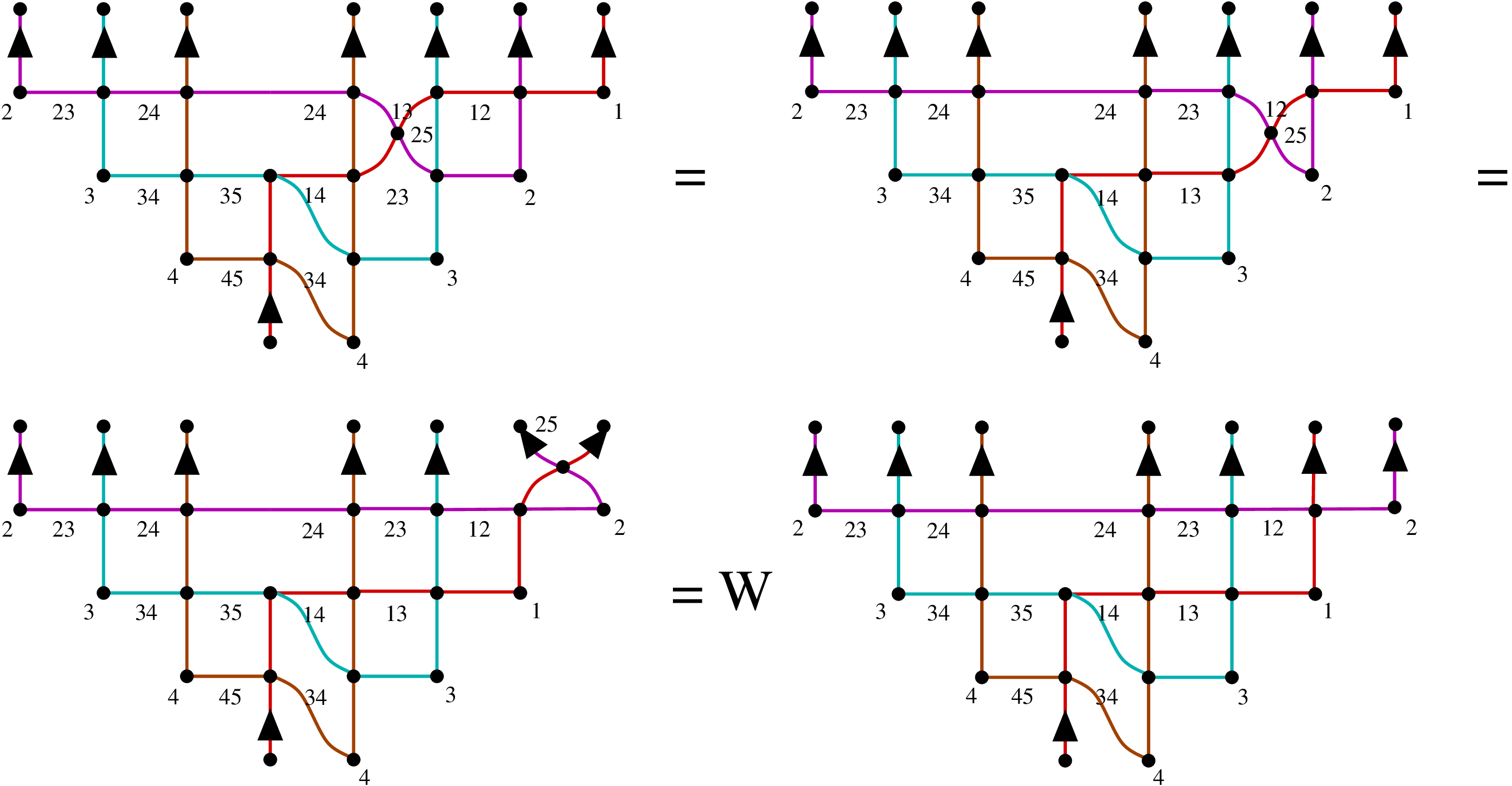}}
\end{center}
Next we apply the Yang--Baxter equation \eqref{ybe} to the vertices with label $u_3 u_{n+1}$, $u_1 u_n$ and to the right vertex with label $u_3 u_n$ and, with repeated applications of \eqref{ybe}, move to the right as much as possible---unless this triangle is already at the right boundary. We then use the right reflection equation \eqref{rre} and afterwards the Yang--Baxter equation until we reach the top. We untangle and gain the factor $\wsw{u_3 u_{n+1}} = \wne{u_3 u_{n+1}}=\w_3$. In our example, we perform the following.
\begin{center}
\scalebox{0.35}{
\psfrag{1}{\huge $u_1$}
\psfrag{2}{\huge $u_2$}
\psfrag{3}{\huge $u_3$}
\psfrag{12}{\huge $u_1 u_2$}
\psfrag{13}{\huge $u_1 u_3$}
\psfrag{23}{\huge $u_2 u_3$}
\psfrag{14}{\huge $u_1 u_4$}
\psfrag{24}{\huge $u_2 u_4$}
\psfrag{34}{\huge $u_3 u_4$}
\psfrag{4}{\huge $u_4$}
\psfrag{15}{\huge $u_1 u_5$}
\psfrag{25}{\huge $u_2 u_5$}
\psfrag{35}{\huge $u_3 u_5$}
\psfrag{45}{\huge $u_4 u_5$}
\psfrag{=}{\Huge $=$}
\psfrag{=W}{\Huge $=\w_3$}
\includegraphics{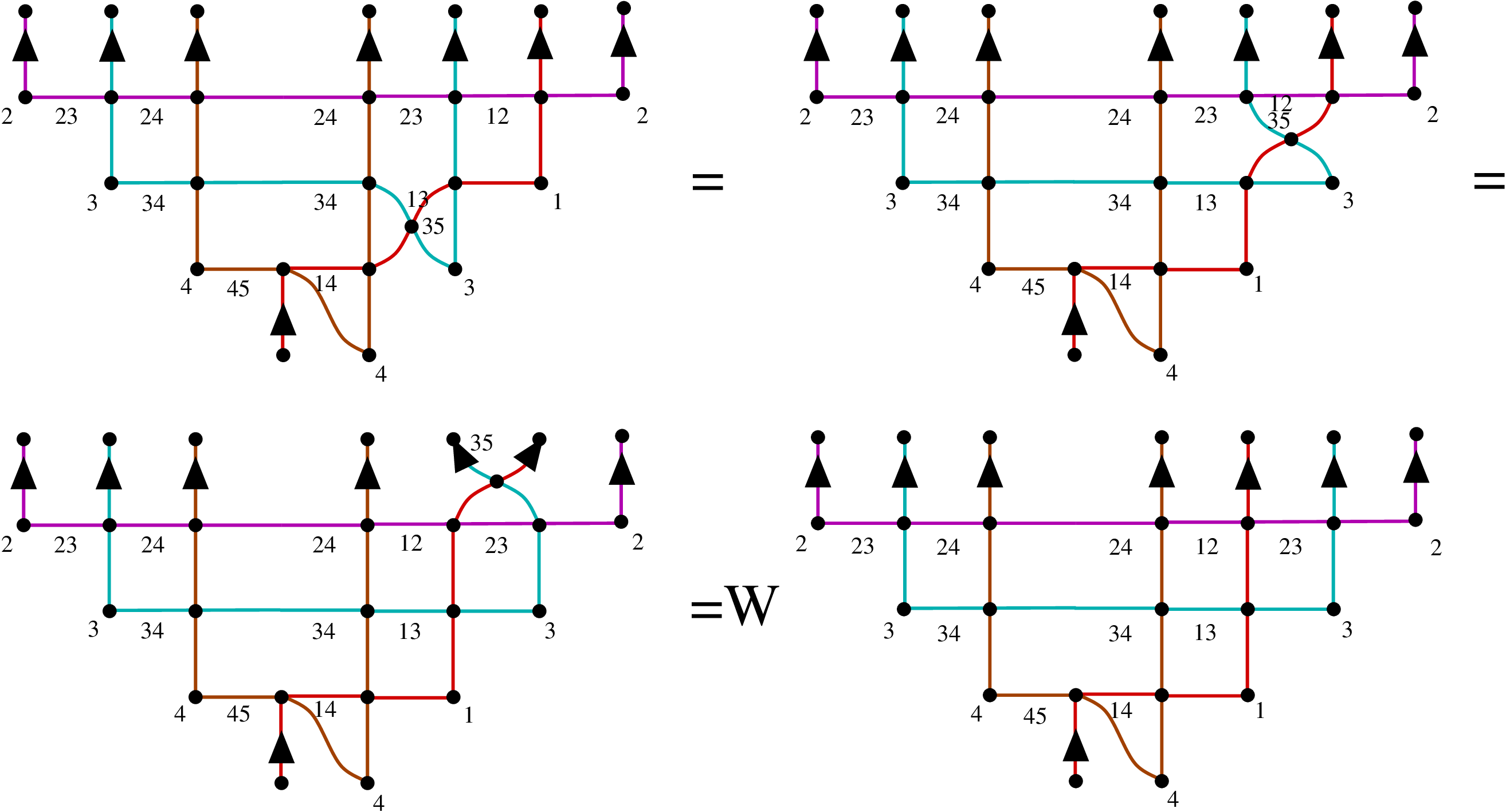}}
\end{center}
We continue in this manner and obtain in total the factor $\prod_{i=2}^{n} \wne{u_i u_{n+1}}$. \ilse{In our example, we still need to apply the following steps:
\begin{center}
\scalebox{0.35}{
\psfrag{1}{\huge $u_1$}
\psfrag{2}{\huge $u_2$}
\psfrag{3}{\huge $u_3$}
\psfrag{12}{\huge $u_1 u_2$}
\psfrag{13}{\huge $u_1 u_3$}
\psfrag{23}{\huge $u_2 u_3$}
\psfrag{14}{\huge $u_1 u_4$}
\psfrag{24}{\huge $u_2 u_4$}
\psfrag{34}{\huge $u_3 u_4$}
\psfrag{4}{\huge $u_4$}
\psfrag{15}{\huge $u_1 u_5$}
\psfrag{25}{\huge $u_2 u_5$}
\psfrag{35}{\huge $u_3 u_5$}
\psfrag{45}{\huge $u_4 u_5$}
\psfrag{=}{\Huge $=$}
\psfrag{=W}{\Huge $=\w_4$}
\includegraphics{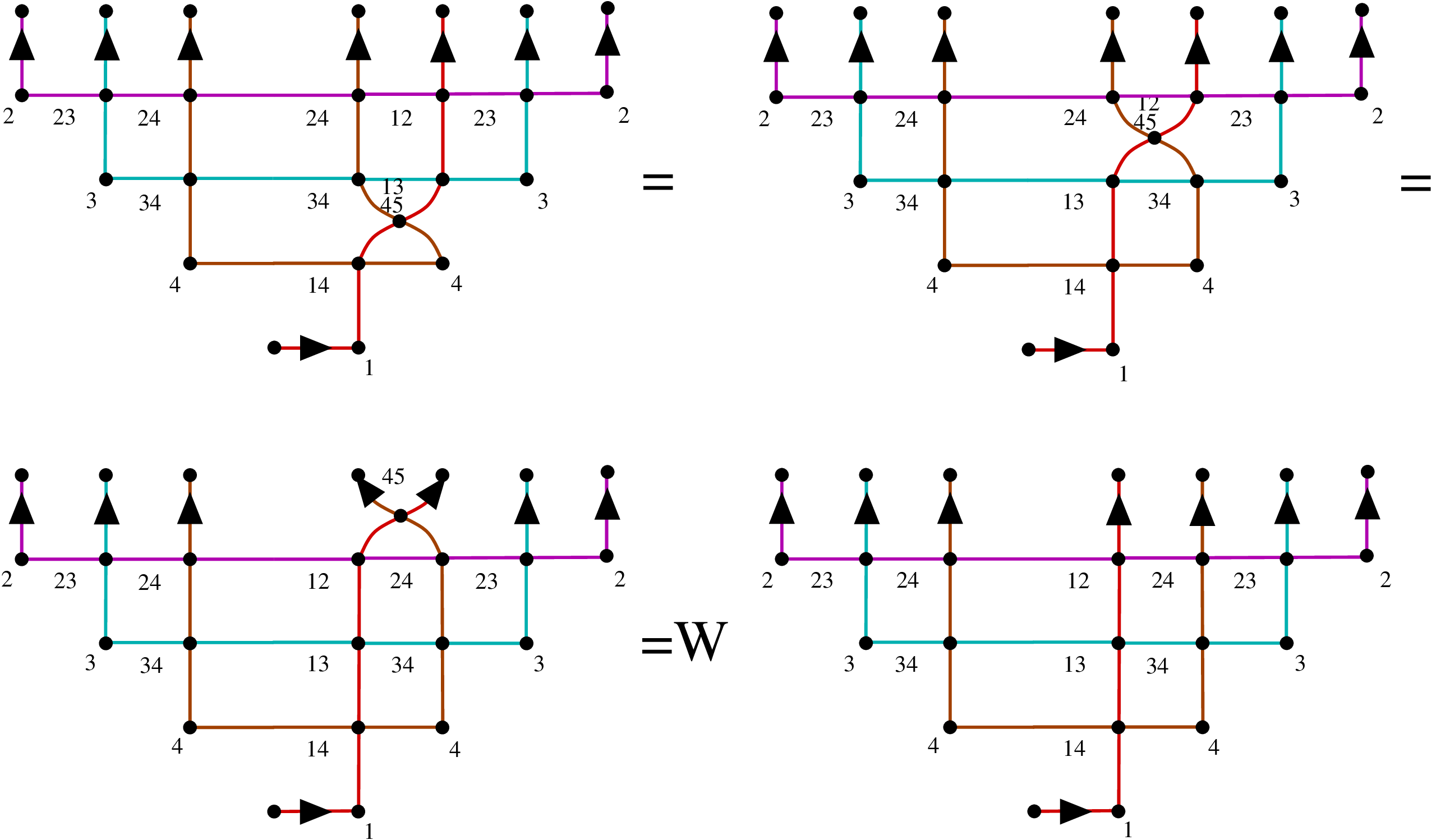}},
\end{center}
where $\wsw{u_4 u_{n+1}} = \wne{u_4 u_{n+1}}=\w_4$.}
The generating function of the resulting graph can be expressed as
$$\wrone{u_1} Z^{\uparrow}_{n-1}(u_2,\ldots,u_n; u_1) + \wrin{u_1} Z^{\downarrow}_{n-1}(u_2,\ldots,u_n;  u_1).$$

The proof of \eqref{down} is analogous, and \eqref{full} then follows from \eqref{up} and \eqref{down} using
Lemma~\ref{updown}.
\end{proof}

\begin{rem}
In \cite[Proposition~15]{DASASM}, we have shown the following identity for the
$\dasasm$ specialization of the order $n$ partition function.
\begin{equation}
\label{complicated}
\begin{aligned}
Z_n(u_1,\ldots,u_n; q^2 \bar u_1) & =
\left( (\wrone{u_1}+ \wrout{u_1})
Z^{\uparrow}_{n-1}(u_2,\ldots,u_n;  u_1) \right. \\
&
\qquad \qquad \qquad \left. + (\wrin{u_1} +  \wrmone{u_1})  Z^{\downarrow}_{n-1}(u_2,\ldots,u_n; u_1) \right) \Omega_n
\end{aligned}
\end{equation}
The proof is based on \eqref{trivial}, \eqref{ybe} and \eqref{rre}, and can thus be generalized to our more general weights. The identity is equivalent to Lemma~\ref{eval}, due to Lemma~\ref{updown}.
\end{rem}

\begin{theo}
\label{un} \ilse{
A sequence of Laurent polynomials $\widehat{Z}_n(u_1,\ldots,u_n;u_{n+1})$, $n \ge 1$,
in $u_{n+1}$ over the field
$\mathbb{C}(q,u_1,\ldots,u_n,\alpha_L,\beta_L,\gamma_L,\delta_L,\alpha_R,\beta_R,\gamma_R,\delta_R)$ is equal to the sequence of partition functions $Z_{n}(u_1,\ldots,\allowbreak u_n;u_{n+1})$ if and only if
$\widehat{Z}_1(u_1;u_2) = Z_1(u_1;u_2)$, and the
following conditions are satisfied for $n>1$.
\begin{enumerate}
\item
\label{1degree} The degree of $\widehat{Z}_n(u_1,\ldots,u_n;u_{n+1})$ in $u_{n+1}$ is no greater than $n$ and the order is no smaller than $-n$.
\item
\label{1sym} $\widehat{Z}_n(u_1,\ldots,u_n;u_{n+1})$ is symmetric in $u_1,\ldots,u_n$.
\item
\label{1inv} $\widehat{Z}_n(u_1,\ldots,u_n;u_{n+1})$ is invariant when simultaneously replacing $(u_1,\ldots,u_{n+1})$ with $(\bar u_1,\ldots,\allowbreak \bar u_{n+1})$ and interchanging left and right boundary constants.
\item
\label{1even} $\widehat{Z}_n(u_1,\ldots,u_n;u_{n+1})$ is even in $u_i$, for all $i=1,2,\ldots,n$.
\item
\label{1eval} $\widehat{Z}_n(u_1,\ldots,u_n; u_{n+1})$ satisfies the identity obtained from \eqref{full} by replacing  $Z_n$ with $\widehat{Z}_n$ on the left-hand side, and $Z_{n-1}$ with $\widehat{Z}_{n-1}$ on the right-hand side.
\end{enumerate}
Analogous characterizations for $Z^{\uparrow}_n(u_1,\ldots,u_n;u_{n+1})$ and  $Z^{\downarrow}_n(u_1,\ldots,u_n;u_{n+1})$ are obtained by replacing \eqref{full} in
\eqref{1eval} with \eqref{up} and \eqref{down}, respectively.}
\end{theo}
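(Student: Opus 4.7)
The plan is to verify the forward direction via the previously established results and then to prove uniqueness by an interpolation argument in the variable $u_{n+1}$. The forward direction is immediate: condition~(\ref{1degree}) is Proposition~\ref{even}(\ref{even-2}), (\ref{1sym}) is Theorem~\ref{sym}, (\ref{1inv}) is Proposition~\ref{inv}, (\ref{1even}) is Proposition~\ref{even}(\ref{even-1}), and (\ref{1eval}) is Lemma~\ref{eval}. For uniqueness, suppose $\widehat{Z}_n$ and $\widetilde{Z}_n$ are two Laurent polynomials satisfying (\ref{1degree})--(\ref{1eval}); the goal is to show that their difference $D:=\widehat{Z}_n-\widetilde{Z}_n$ is identically zero. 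By condition~(\ref{1degree}), $u_{n+1}^{n}\,D$ is a polynomial in $u_{n+1}$ of degree at most $2n$ over the field $\mathbb{C}(q,u_1,\ldots,u_n,\alpha_L,\ldots,\delta_R)$, so it suffices to exhibit more than $2n$ roots.

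The starting observation is that both $\widehat{Z}_n$ and $\widetilde{Z}_n$ are equated, via condition~(\ref{1eval}), to the same explicit right-hand side at $u_{n+1}=q^2\bar u_1$, so $D$ vanishes there. Three propagation steps will generate $4n$ roots in total. First, symmetry in $u_1,\ldots,u_n$ from condition~(\ref{1sym}) upgrades this to vanishing at $u_{n+1}=q^2\bar u_i$ for every $i$. Second, substituting $u_1\mapsto -u_1$ in condition~(\ref{1eval}) and using evenness in $u_1$ from condition~(\ref{1even}) yields $D(u_1,\ldots,u_n;-q^2\bar u_1)=0$, and symmetry again produces vanishing at $u_{n+1}=\pm q^2\bar u_i$ for every $i$. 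Third, formally applying the involution of condition~(\ref{1inv})---substituting $(u_j)\mapsto(\bar u_j)$ together with swapping the boundary constants $L\leftrightarrow R$---to condition~(\ref{1eval}) produces an identity at the swapped boundaries; returning to the original variables by invoking~(\ref{1inv}) for $\widehat{Z}_n$ on the left, and the already-known invariance of the true $Z_{n-1}$ (Proposition~\ref{inv}) on the right to rewrite the right-hand side in terms of $Z_{n-1}(u_2,\ldots,u_n;\pm u_1)$, produces an evaluation of $\widehat{Z}_n$ (and likewise $\widetilde{Z}_n$) at $u_{n+1}=\bar q^2\bar u_1$. Symmetry and evenness then give vanishing of $D$ at $u_{n+1}=\pm\bar q^2\bar u_i$ for every $i$.

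Over the stated field, the $4n$ values $\{\pm q^{\pm 2}\bar u_i\}_{i=1}^{n}$ are pairwise distinct, since $u_1,\ldots,u_n,q$ are algebraically independent transcendentals. Hence $u_{n+1}^{n}\,D$ has strictly more than $2n$ roots and must vanish identically, so $D\equiv 0$. The characterizations of $Z_n^{\uparrow}$ and $Z_n^{\downarrow}$ follow by exactly the same argument, using~\eqref{up} or~\eqref{down} in place of~\eqref{full}.

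The main obstacle is the careful bookkeeping of the third step: one must check that applying the $L\leftrightarrow R$ involution to condition~(\ref{1eval}) and then returning to the original variables via~(\ref{1inv}) for $\widehat{Z}_n$, combined with Proposition~\ref{inv} applied to the known $Z_{n-1}$, yields a genuine evaluation of $\widehat{Z}_n$ at $u_{n+1}=\bar q^2\bar u_1$ whose right-hand side coincides with the analogous evaluation for $\widetilde{Z}_n$. Beyond this, the proof is a routine interpolation count.
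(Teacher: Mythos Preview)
Your proof is correct and follows essentially the same approach as the paper: both argue by interpolation in $u_{n+1}$, using the $4n$ evaluation points $u_{n+1}=\pm q^{\pm 2}\bar u_i$ ($i=1,\ldots,n$) obtained by combining condition~(\ref{1eval}) with (\ref{1sym}), (\ref{1even}), and (\ref{1inv}), together with the known invariance of $Z_{n-1}$. The only cosmetic difference is that you phrase uniqueness via the vanishing of the difference $D=\widehat{Z}_n-\widetilde{Z}_n$, whereas the paper directly compares $\widehat{Z}_n$ to $Z_n$ at these points; the paper also writes out the explicit form of the evaluations at $u_{n+1}=\pm\bar q^2\bar u_1$ (your ``third step''), confirming the bookkeeping you flag as the main obstacle.
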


\begin{proof} \ilse{By Theorem~\ref{sym}, Proposition~\ref{inv},  Proposition~\ref{even} and Lemma~\ref{eval}, the partition function $Z_n(u_1,\ldots,u_n;u_{n+1})$ has all the
properties listed for $\widehat{Z}_n(u_1,\ldots,u_n;u_{n+1})$ in the statement. We show by induction with respect to $n$ that $\widehat{Z}_n(u_1,\ldots,u_n;u_{n+1})$ is uniquely determined by these properties. For $n=1$ this is true by assumption, and we assume $n>1$ in the following.

We consider  $\widehat{Z}_n(u_1,\ldots,u_n;u_{n+1})$ as Laurent polynomials in $u_{n+1}$, and, by \eqref{1degree}, it suffices to find $2n+1$ evaluations.
Now one evaluation (at $u_{n+1}= q^2 \bar u_1$) is given by \eqref{1eval}. Using  \eqref{1even} and \eqref{1eval}, as well as the facts that $\wrone{u_1}, \wrmone{u_1}$ are odd in $u_1$, while $\wrin{u_1}, \wrout{u_1}, \Omega_n$ are even, we obtain an evaluation at $u_{n+1}= - q^2 \bar u_1$ as follows.
\begin{equation}
\label{negative}
\begin{aligned} \widehat{Z}_{n}(u_1, u_2, &\ldots,u_{n}; - q^2 \bar u_1) \\
& \stackrel{\eqref{1even}}{=}
\widehat{Z}_n(- u_1,  u_2 \ldots, u_n;  - q^2 \bar u_1) \\
& \stackrel{\eqref{1eval}}{=}  \frac{1}{2}
\left\{ \left[ - \wrone{u_1} +  \wrout{u_1} + \wrin{u_1}  - \wrmone{u_1} \right]
\widehat{Z}_{n-1}(u_2,\ldots,u_n; - u_1) \right.  \\
& \quad  \left.  +  (-1)^{n+1} \left[ - \wrone{u_1}+ \wrout{u_1} -  \wrin{u_1} + \wrmone{u_1} \right]  \widehat{Z}_{n-1}(u_2,\ldots,u_n;  u_1) \right\}   \Omega_n.
\end{aligned}
\end{equation}
Using \eqref{1inv}, we also obtain evaluations at $u_{n+1} = \pm \bar q^2 \bar u_1$ as follows.
\begin{equation*}
\begin{aligned} \widehat{Z}_{n}(u_1, &\ldots,u_{n}; \pm \bar q^2 \bar u_1) \\
& \stackrel{\eqref{1inv}}{=}
 \widehat{Z}_n(\bar u_1, \ldots, \bar u_n;  \pm  q^2 u_1)_{L \leftrightarrow R} \\
& \stackrel{\eqref{1eval}, \eqref{negative},\eqref{1inv}}{=}
 \frac{1}{2}
\left\{  \left[ \pm \wlone{u_1} +  \wlout{u_1} + \wlin{u_1}  \pm \wlmone{u_1} \right]
\widehat{Z}_{n-1}(u_2,\ldots, u_n;  \pm u_1) \right.  \\
& \qquad  \left.  +  (-1)^{n+1} \left[ \pm \wlone{u_1}+ \wlout{ u_1} -  \wlin{ u_1} \mp \wlmone{u_1} \right]  \widehat{Z}_{n-1}(u_2,\ldots,u_n;  \mp u_1) \right\}  \\
& \qquad  \qquad \times  \wrout{u_1}
\prod_{i=2}^{n} \wnw{u_1 u_i} \wnw{\bar q^2 \bar u_1 u_i}
\end{aligned}
\end{equation*}
By the symmetry in $u_1,\ldots,u_{n}$, $u_1$ can be replaced by any $u_i$, $i=2,3,\ldots,n$, and so we have in total $4n$ evaluations in $u_{n+1}$, namely at $u_{n+1} = \pm q^{\pm 2} \bar u_i$, $i=1,2,\ldots,n$. All these evaluations are uniquely determined as $\widehat{Z}_{n-1}(u_2,\ldots, u_n; u_1)$ is uniquely determined by the induction hypothesis.}

The proofs for $Z_{n}^{\uparrow}$ and $Z_n^{\downarrow}$ are similar.
\end{proof}

\subsection{Characterization through evaluations in $u_1$}

This section provides preparations for alternative proofs of some results  that avoid Lemma~\ref{eval}, and it can therefore also be omitted when reading the article.

\begin{lem}
\label{lem_eval1}
Let $n$ be a positive integer.  For $u_1 = q, i q$,
\begin{equation}
\label{eval1}
Z_{n}(u_1,\ldots,u_n;u_{n+1}) =
\wlout{u_1} \wrone{u_1} \wne{u_1 u_{n+1}} \prod_{j=2}^{n} \wne{u_1 u_j}^2
Z_{n-1}(u_2,\ldots,u_n;u_{n+1}).
\end{equation}
The corresponding identities also hold for the refined partition functions
$Z^{\uparrow}_n,Z^{\downarrow}_n$.
\end{lem}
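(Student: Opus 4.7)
The plan is to mimic the argument in the proof of Lemma~\ref{eval}, using the freezing identity \eqref{trivial} together with the Yang--Baxter equation \eqref{ybe} and the left/right reflection equations \eqref{lre}, \eqref{rre} to reduce the order $n$ partition function to the order $n-1$ one when $u_1$ is specialized to $q$ or $iq$.

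First, I would observe that at $u_1 \in \{q, iq\}$ one has $u_1^2 \in \{q^2, -q^2\}$. The identity \eqref{trivial} freezes any bulk vertex with label $q^2$; an analogous statement holds at label $-q^2$, because the bulk weight table gives $\wnw{-q^2} = \wse{-q^2} = 0$ while $\wsw{-q^2} = \wne{-q^2} = -1$ and $\wone{-q^2} = \wmone{-q^2} = 1$, so the set of admissible orientations is the same as at $q^2$ up to an overall sign on the $\sw$ and $\ne$ types. To make this effective, I would augment the triangular configuration with an auxiliary crossing of the $u_1$-line with itself, creating an interior bulk vertex of label $u_1^2$. This crossing is propagated across the top row by iterated application of the Yang--Baxter equation and absorbed at the boundary via the reflection equations, closely following the sliding procedure already used in the proof of Lemma~\ref{eval}.

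After the propagation, the freezing constraint forces the original top row into the unique pattern compatible with the pass-through conditions at the crossings and with the upward-pointing top edges: the top-row left boundary vertex is of type $\lout$, every top-row bulk vertex is of type $\ne$, and the top-row right boundary vertex is of type $\rone$. The corresponding weights multiply out to $\wlout{u_1}\,\wrone{u_1}\,\wne{u_1 u_{n+1}}\,\prod_{j=2}^{n} \wne{u_1 u_j}^2$, which is exactly the prefactor in \eqref{eval1}. Removing the frozen top row leaves an order $n-1$ triangular configuration on $\mathcal{T}_{n-1}$ with spectral parameters $u_2,\ldots,u_n,u_{n+1}$, whose generating function is $Z_{n-1}(u_2,\ldots,u_n;u_{n+1})$. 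The refined identities for $Z_n^{\uparrow}$ and $Z_n^{\downarrow}$ follow by the same argument, since the orientation of the bottom vertical edge is untouched by the freezing of the top row.

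The hardest part of the argument will be the careful bookkeeping in the $u_1 = iq$ case, where the freezing at label $-q^2$ introduces relative signs between the vertex types $\{\one,\mone\}$ and $\{\sw,\ne\}$ appearing on the auxiliary path: one has to verify that these signs combine to reproduce the same formula as in the $u_1 = q$ case. A possible alternative, avoiding the auxiliary-line manipulation, is to invoke the characterization Theorem~\ref{un} and to check that both sides of \eqref{eval1}, viewed as Laurent polynomials in $u_{n+1}$, satisfy conditions \eqref{1degree}--\eqref{1eval} separately at $u_1 = q$ and at $u_1 = iq$; this reduces the task to verifying a handful of evaluations, but ultimately still relies on the same freezing observation.
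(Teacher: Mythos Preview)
Your approach is far more complicated than necessary, and the key step---inserting an ``auxiliary crossing of the $u_1$-line with itself'' to create a bulk vertex of label $u_1^2$---is not clearly justified: no such vertex exists in $\mathcal{T}_n$, and you do not explain by which identity (a unitarity relation?) you are entitled to introduce one. In Lemma~\ref{eval} the vertex with label $u_1 u_{n+1}$ is already present and the specialization $u_{n+1}=q^2\bar u_1$ turns it into the frozen vertex; there is nothing analogous here. The subsequent sign bookkeeping at $u_1=iq$ that you flag as ``the hardest part'' is a symptom of this detour.

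The paper's proof is a two-line direct argument. Since the top edges point up, the top-right boundary vertex can only be of type $\rone$ or $\rout$. But
\[
\wrout{u_1}=\delta_R\,\frac{\sigma(q^2\bar u_1^2)}{\sigma(q^2)}
\]
vanishes at $u_1=q$ (since $\sigma(1)=0$) and at $u_1=iq$ (since $\sigma(-1)=0$). Hence the top-right boundary vertex is forced to be $\rone$, so the rightmost horizontal edge in the top row points right. Combined with the upward top edges, this forces the adjacent bulk vertex to be $\ne$, and the constraint propagates leftward across the entire top row: every top bulk vertex is $\ne$ and the top-left boundary vertex is $\lout$. Deleting this frozen row yields $Z_{n-1}(u_2,\ldots,u_n;u_{n+1})$ times the product of the frozen weights, which is exactly the prefactor in~\eqref{eval1}. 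No Yang--Baxter, no reflection equations, no auxiliary lines. The refined versions follow identically because the bottom edge is untouched.
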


\begin{proof}
The possible local configurations for the top right boundary vertex  are $\rone$ and $\rout$. However, as $\wrout{q} = \wrout{ i q} = 0$, this local configuration must be $\rone$, which causes a fixing of the top row. More precisely, the local configurations of the top bulk vertices are $\ne$, and $\lout$ for the top left boundary vertex. \end{proof}

\begin{lem}
\label{lem_eval2} Let $n > 1$.
For $u_1= q^2 \bar u_2$,
\begin{multline}
\label{eval2}
Z_n(u_1,\ldots,u_n;u_{n+1}) = (\wrout{u_1} \wrin{u_2} +
\wrone{u_1} \wrone{u_2}) \wlout{u_1} \wlout{u_2} \\
\wne{u_1 u_{n+1}} \wne{u_2 u_{n+1}} \prod_{j=3}^{n} \wne{u_1 u_j}^2
\wne{u_2 u_j}^2 Z_{n-2}(u_3,\ldots,u_n;u_{n+1}).
\end{multline}
The corresponding identities also hold for the refined partition functions
$Z^{\uparrow}_n, Z^{\downarrow}_n$.
\end{lem}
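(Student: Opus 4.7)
The plan is to mirror the strategy of Lemma~\ref{lem_eval1}, exploiting the identity \eqref{trivial} to freeze a portion of the configuration. The specialization $u_1 = q^2 \bar u_2$ forces $u_1 u_2 = q^2$, so the bulk vertex sitting at the crossing of the $u_1$- and $u_2$-paths (the leftmost bulk vertex of the top row) has weight $\wwnes{q^2}$. By \eqref{trivial}, its west edge is opposite in orientation to its north edge, and its east edge is opposite to its south edge. Since the north edge is a top vertical edge, which is outgoing, the west edge must be incoming, and this in turn forces the top-left boundary vertex on the $u_1$-path to be of type $\lout$, contributing $\wlout{u_1}$.

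To extend the freezing to the $u_2$-path, I would combine \eqref{trivial} with the left reflection equation \eqref{lre}, choosing $u = u_1$, $v = u_2$ so that $y = u_1 u_2 = q^2$. The LRE then rewrites the top-left corner in a form where the frozen vertex with label $q^2$ produces further constraints, which pin down the left-boundary vertex of row~2 to be $\lout$ (contributing $\wlout{u_2}$) and fix the portion of the top two rows lying to the left of the central column.

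Next, I would apply the Yang--Baxter equation \eqref{ybe} repeatedly to slide the $u_1$- and $u_2$-paths past each $u_j$-path for $j = 3,\ldots,n+1$, exactly as in the proof of Lemma~\ref{eval}. Each bulk-vertex crossing that becomes fixed contributes the appropriate $\wne{u_i u_j}$ factor; each of $u_1$ and $u_2$ crosses $u_j$ twice for $j = 3,\ldots,n$ (giving the squared factors) but only once for $j = n+1$ (through the central column), yielding the product
\[
\wne{u_1 u_{n+1}}\,\wne{u_2 u_{n+1}} \prod_{j=3}^{n} \wne{u_1 u_j}^2 \wne{u_2 u_j}^2.
\]
At the right boundary I would apply the right reflection equation \eqref{rre} with $u = u_1$, $v = u_2$ (again using $y = u_1 u_2 = q^2$) to combine the two top right-boundary vertices. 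The two permissible joint configurations that survive are $(\rone,\rone)$ and $(\rout,\rin)$, producing the sum $\wrout{u_1} \wrin{u_2} + \wrone{u_1} \wrone{u_2}$. What remains is the triangular graph $\mathcal{T}_{n-2}$ on parameters $u_3,\ldots,u_{n+1}$, whose partition function is $Z_{n-2}(u_3,\ldots,u_n;u_{n+1})$. Since none of these manipulations touch the bottom vertex of $\mathcal{T}_n$, the analogous identities for $Z_n^{\uparrow}$ and $Z_n^{\downarrow}$ follow at once by restricting to configurations with the bottom vertical edge pointing up or down.

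The main obstacle will be step-by-step bookkeeping: organizing the sequence of YBE moves so that every promised $\wne$-factor appears exactly once with the correct multiplicity, and carrying out a careful case analysis at the top-right corner to confirm that exactly the two right-boundary configurations $(\rone,\rone)$ and $(\rout,\rin)$ survive the right reflection equation, with no overcounting and no extra terms contributing.
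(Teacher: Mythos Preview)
Your plan deviates substantially from the paper's proof, and in doing so it overlooks the two observations that make this lemma almost immediate. The paper's argument uses only \eqref{trivial} and \eqref{rue}; no Yang--Baxter moves and no reflection equations are needed.

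The point you are missing is that there are \emph{two} bulk vertices in row~1 whose label is $u_1u_2=q^2$: the leftmost one (at column~2) \emph{and} the rightmost one (at column~$2n$). Applying \eqref{trivial} at the left one forces the top-left boundary vertex to be $\lout$, as you observed. Applying \eqref{trivial} at the right one uncrosses that vertex as well: its NW pair links the east edge of $(1,2n-1)$ to the top edge above $(1,2n)$ (hence that east edge points right), while its SE pair links the right-boundary vertex of row~1 (label~$u_1$) directly to the right-boundary vertex of row~2 (label~$u_2$). The resulting two–right-boundary configuration is exactly the one in \eqref{rue} with $qu=u_1$, $q\bar u=u_2$, which produces the factor $\wrone{u_1}\wrone{u_2}+\wrin{u_1}\wrout{u_2}=\wrone{u_1}\wrone{u_2}+\wrout{u_1}\wrin{u_2}$ and, via the $\delta_{o_1,\tilde o_2}$, forces the east edge of $(2,2n-1)$ to point right. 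From here the freezing cascades leftward: each bulk vertex in rows~1 and~2 has its north and east edges outgoing, so it is forced to be $\ne$, and the cascade terminates at $(2,2)=\lout$. The south edges of row~2 then supply the ``top edges up'' condition for the remaining graph $\mathcal{T}_{n-2}$.

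Your use of the LRE in step~3 is the main gap: the reflection equation is an identity between two configurations, not a freezing device, and you do not explain what forces $\lout$ at the second left boundary. The subsequent YBE ``sliding'' is borrowed from Lemma~\ref{eval}, but there is no extra line here to push through the triangle; rows~1 and~2 are already at the top, and the freezing is purely local once both $q^2$-vertices and \eqref{rue} have been applied. Likewise the right-hand factor comes from \eqref{rue}, not from the right reflection equation. Your final remark about $Z_n^{\uparrow}$ and $Z_n^{\downarrow}$ is correct.
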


\begin{proof}
This identity appeared before for the
$\dasasm$ specialization of the universal partition function in  \cite[Proposition~14]{DASASM}. The proof is based on \eqref{trivial} and \eqref{rue}, and since these rules are also valid for our more general weights, \eqref{eval2} is true in general.
\end{proof}

\begin{theo}
\label{u1} Let $p,q$ be indeterminates. Suppose we consider a specialization of $Z_n(u_1,\ldots,u_n;u_{n+1})$ where $u_{n+1}=p$ and
$\alpha_L,\beta_L,\gamma_L,\delta_L,\alpha_R,\beta_R,\gamma_R,\delta_R \in \mathbb{C}(p,q)$ such that
\begin{itemize}
\item $Z_n(u_1,\ldots,u_n;u_{n+1})$ has a zero at $u_1 = \bar p q^2$, and
\item left boundary constants are transformed into corresponding right boundary constants when replacing $p$ by $\bar p$.
\end{itemize}
A sequence of Laurent polynomials $\widehat{Z}_n(u_1,\ldots,u_n)$, $n \ge 1$,
in the variables $u_1,\ldots,u_{n}$ with coefficients in $\mathbb{C}(p,q)$ is equal to the above mentioned sequence of specializations of partition functions $Z_{n}(u_1,\ldots,u_n;u_{n+1})$ if and only if the sequences agree for $n=1,2$ and the following conditions are satisfied for $n>2$.
\begin{enumerate}
\item
\label{2degree}
 The degree of $\widehat{Z}_n(u_1,\ldots,u_n)$ in $u_{1}$ is no greater than $2n+2$ and the order is no smaller than $-2n-2$.
\item
\label{2sym}
$\widehat{Z}_n(u_1,\ldots,u_n)$ is symmetric in $u_1,\ldots,u_n$.
\item
\label{2inv}
$\widehat{Z}_n(u_1,\ldots,u_n)$ is invariant under the transformation
$(u_1,\ldots,u_{n},p) \mapsto (\bar u_1,\ldots, \bar u_{n},\bar p)$.
\item
\label{2even}
$\widehat{Z}_n(u_1,\ldots,u_n)$ is even in $u_i$, for all $i=1,2,\ldots,n$.
\item
\label{2eval}
$\widehat{Z}_n(u_1,\ldots,u_n)$ satisfies the identities obtained from \eqref{eval1} and \eqref{eval2} by replacing on the left-hand sides $Z_n(u_1,\ldots,u_n;u_{n+1})$ by
$\widehat{Z}_n(u_1,\ldots,u_n)$, replacing on the right-hand side of \eqref{eval1} $Z_{n-1}(u_2, \ldots, u_n; u_{n+1})$ by $\widehat{Z}_{n-1}(u_2, \ldots, u_n)$, replacing on the right-hand side of \eqref{eval2} $Z_{n-2}(u_3,\allowbreak \ldots, u_n; u_{n+1})$ by $\widehat{Z}_{n-2}(u_3, \ldots, u_n)$  and considering the same specializations of the
boundary constants and $u_{n+1}$ on the right-hand sides.
\item
\label{2extra}
$\widehat{Z}_n(u_1,\ldots,u_n)$ has a zero at $u_{1}=\bar p q^2$.
\end{enumerate}
Analogous characterizations for $Z^{\uparrow}_n(u_1,\ldots,u_n;u_{n+1})$ and  $Z^{\downarrow}_n(u_1,\ldots,u_n;u_{n+1})$ are obtained by replacing \eqref{eval1} and \eqref{eval2} with the respective identities.
\end{theo}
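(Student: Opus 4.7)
The plan is to mimic the argument of Theorem~\ref{un}, now localizing in $u_1$ rather than in $u_{n+1}$. Necessity is a direct citation of earlier results: Proposition~\ref{even}~\eqref{even-2} yields \eqref{2degree}, Theorem~\ref{sym} yields \eqref{2sym}, Proposition~\ref{inv} together with the standing hypothesis that $p\mapsto\bar p$ swaps the left and right boundary constants yields \eqref{2inv}, Proposition~\ref{even}~\eqref{even-1} yields \eqref{2even}, Lemmas~\ref{lem_eval1} and~\ref{lem_eval2} yield \eqref{2eval}, and \eqref{2extra} is the other standing hypothesis. The substance lies in the converse.

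For the converse I would regard any $\widehat Z_n$ satisfying \eqref{2degree}--\eqref{2extra} as a Laurent polynomial in $u_1$ with coefficients in $\mathbb{C}(q,p,u_2,\ldots,u_n)$. Conditions \eqref{2degree} and \eqref{2even} force $\widehat Z_n=\sum_{k=-n-1}^{n+1}a_k u_1^{2k}$, so it is determined by its values at any $2n+3$ generic values of $u_1^{2}$. The aim is then to exhibit at least that many evaluations on which $\widehat Z_n$ and $Z_n$ agree and conclude that the difference vanishes.

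From \eqref{2eval} together with Lemma~\ref{lem_eval1}, the values of $\widehat Z_n$ at $u_1=q$ and $u_1=iq$ are pinned down; from \eqref{2eval}, Lemma~\ref{lem_eval2}, and the symmetry \eqref{2sym} (used to swap $u_2$ with $u_j$), the values at $u_1=q^{2}\bar u_j$ for $j=2,\ldots,n$ are pinned down; and \eqref{2extra} pins down the value $0$ at $u_1=\bar p q^{2}$. Applying the invariance \eqref{2inv} in the form $\widehat Z_n(u_1,\ldots,u_n)=\widehat Z_n(\bar u_1,\ldots,\bar u_n)|_{p\mapsto\bar p}$ turns each of these into a companion evaluation at $u_1=\bar q,\ i\bar q,\ \bar q^{2}u_j$ and $u_1=p\bar q^{2}$ respectively. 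Since $\widehat Z_n$ is a function of $u_1^{2}$ by \eqref{2even}, the relevant squared evaluation points are
\[
q^{2},\ -q^{2},\ \bar q^{2},\ -\bar q^{2},\quad q^{4}\bar u_j^{\,2},\ \bar q^{4}u_j^{2}\ (j=2,\ldots,n),\quad \bar p^{\,2}q^{4},\ p^{2}\bar q^{4},
\]
a total of $2n+4$ points. Treating $q,p,u_2,\ldots,u_n$ as indeterminates these are pairwise distinct, so we have one more evaluation than is needed, whence $\widehat Z_n$ is uniquely determined and equals $Z_n$. The refinements $Z^{\uparrow}_n$ and $Z^{\downarrow}_n$ follow by exactly the same argument using the $\uparrow$/$\downarrow$ versions of Lemmas~\ref{lem_eval1} and~\ref{lem_eval2}.

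The main obstacle will be this bookkeeping: the count $2n+4$ exceeds the target $2n+3$ by only one, so condition \eqref{2extra} is indispensable to the argument, and some care is needed to verify that all squared points remain pairwise distinct in generic position. The obstructions to distinctness are coincidences of the shape $u_ju_k=q^{\pm 4}$ or $u_j^{2}=p^{\pm 2}q^{\pm 4}$, all of which fail because $q,p,u_2,\ldots,u_n$ are algebraically independent over $\mathbb{C}$.
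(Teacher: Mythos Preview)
Your approach is the same as the paper's, but there is a slip in computing the companion points obtained from the invariance~\eqref{2inv}. The transformation $(u_1,\ldots,u_n,p)\mapsto(\bar u_1,\ldots,\bar u_n,\bar p)$ sends the relation $u_1=q^2\bar u_j$ to $\bar u_1=q^2 u_j$, i.e., $u_1=\bar q^2\bar u_j$ (not $\bar q^2 u_j$ as you wrote), and sends the zero $u_1=\bar p q^2$ to $u_1=\bar p\bar q^2$ (not $p\bar q^2$); at your stated companion points one cannot actually invoke \eqref{eval2} or~\eqref{2extra} on the right-hand side, so those evaluations are unjustified as written. With this correction the squared evaluation points become $\pm q^{\pm2}$, $q^{\pm4}\bar u_j^{\,2}$ ($j=2,\ldots,n$), and $\bar p^{\,2}q^{\pm4}$, giving $2n+4$ generically distinct values of $u_1^2$, and your argument goes through. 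The paper itself uses only $2n+3$ points---it does not bother with the companion of~\eqref{2extra}---so condition~\eqref{2extra} is indeed exactly what closes the count, as you noted.
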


\begin{proof}
The partition function $Z_n(u_1,\ldots,u_n;u_{n+1})$ satisfies properties \eqref{2degree}--\eqref{2eval} by Proposition~\ref{even}, Theorem~\ref{sym}, Proposition~\ref{inv}, and Lemmas~\ref{lem_eval1} and
\ref{lem_eval2}, and property~\eqref{2extra} by assumption.

An even Laurent polynomial of degree at most $2n+2$ and order at least $-2n-2$ is uniquely determined by $2n+3$ evaluations where we need to
guarantee that there is no pair of evaluations which differ only in sign.

We have $2n-2$ evaluations (at $u_1=q^{\pm 2} \bar u_j$, $j=2,\ldots,n$) if we combine the evaluation obtained from \eqref{eval2} with \eqref{2sym} and \eqref{2inv}, and $4$
evaluations (at $u_1=q^{\pm 1}, i q^{\pm 1}$) if we combine the two evaluations obtained from \eqref{eval1}
with \eqref{2inv}. The additional evaluation is provided by \eqref{2extra}.
\end{proof}

\section{Maximal number of $-1$'s: Alternating sign triangles}
\label{ast}

The main purpose of this section is to provide the proof of Theorem~\ref{maxmone} and generalizations.

\subsection{The $\ast$ partition function}
\begin{theo}
\label{astpartfunct}
We have the following determinant formula for the partition function $Z_n(u_1,\ldots,u_n;\allowbreak u_{n+1})_{\ast}$.
\begin{multline}
\label{astpartfunctexpr}
\frac{\prod_{j=1}^n\sigma(\bar p u_j)\,\prod_{i=1}^n\prod_{j=1}^{n+1}\sigma(q^2u_iu_j)\sigma(q^2 \bar u_i \bar u_j)}
{\sigma(q^2)^{2n}\,\sigma(q^4)^{n(n-1)}\,\prod_{i=1}^n\sigma(u_i \bar u_{n+1})\,\prod_{1\le i<j\le n}\sigma(u_i \bar u_j)^2} \\
\times
\det_{1\le i,j\le n+1}\left( \Large \begin{cases}  \frac{1}{\sigma(q^2u_iu_j)\,\sigma(q^2 \bar u_i \bar u_j)},&i\le n\\[1.5mm]
1 - \frac{\sigma(\bar u_{n+1} u_j)}{\sigma(\bar p u_j)},&i=n+1\end{cases}\right)
\end{multline}
\end{theo}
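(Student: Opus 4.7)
The plan is to define $\widehat{Z}_n(u_1,\ldots,u_n;u_{n+1})$ as the right-hand side of \eqref{astpartfunctexpr} and verify that it satisfies the five characterizing conditions of Theorem~\ref{un} in the $\ast$ specialization (Subsection~\ref{ast-spec}). The uniqueness part of Theorem~\ref{un} then yields $\widehat{Z}_n=Z_n(u_1,\ldots,u_n;u_{n+1})_{\ast}$. The symmetry condition~\eqref{1sym} is immediate since the prefactor is manifestly symmetric in $u_1,\ldots,u_n$ (note the squaring in $\prod_{i<j}\sigma(u_i\bar u_j)^2$) and a simultaneous transposition of two of the first $n$ rows and the corresponding columns preserves the determinant. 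Evenness in each $u_i$ for $i\le n$, condition~\eqref{1even}, holds because $\sigma(q^2 u_iu_j)\sigma(q^2\bar u_i\bar u_j)=q^4+\bar q^4-u_i^2u_j^2-\bar u_i^2\bar u_j^2$ is even in $u_i$, the last-row entry $1-\sigma(\bar u_{n+1}u_j)/\sigma(\bar p u_j)$ is a quotient of two functions odd in $u_j$ hence even in $u_j$, and the $\sigma$-factor signs in the prefactor cancel in an even number. Condition~\eqref{1inv} (invariance under $(u_i\mapsto\bar u_i)$ together with $L\leftrightarrow R$ swap, which in the $\ast$ specialization reduces to $p\mapsto\bar p$) follows from $\sigma(x^{-1})=-\sigma(x)$ and a careful parity count. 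Finally, for the degree bound~\eqref{1degree}, one expands the determinant along column~$n+1$: the rational dependence on $u_{n+1}$ through the entries $1/[\sigma(q^2u_iu_{n+1})\sigma(q^2\bar u_i\bar u_{n+1})]$ in that column is precisely cleared by the corresponding $\sigma$-factors in the prefactor numerator, leaving only the linear $u_{n+1}$-dependence of the last row, which produces a Laurent polynomial in $u_{n+1}$ of degree at most $n$ and order at least $-n$.

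The main step is the verification of the evaluation identity~\eqref{1eval}, carried out by induction on~$n$. Observe that at $u_{n+1}=q^2\bar u_1$ the prefactor numerator contains the factor $\sigma(q^2\bar u_1\bar u_{n+1})=\sigma(1)=0$, which cancels the simple pole at the same point in the row-$1$, column-$(n+1)$ entry $1/[\sigma(q^2u_1u_{n+1})\sigma(q^2\bar u_1\bar u_{n+1})]$ of the determinant; concurrently $\sigma(q^2u_1u_{n+1})=\sigma(q^4)$ becomes a nonzero constant. The plan is to exploit this cancellation via column operations: subtract suitable multiples of column~$n+1$ from the other columns so as to zero out row~$1$ except in a single entry, and then expand along row~$1$. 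This reduces the $(n+1)\times(n+1)$ determinant to an $n\times n$ determinant in the variables $u_2,\ldots,u_n$. Combined with the surviving prefactor at $u_{n+1}=q^2\bar u_1$, the resulting expression splits into two pieces corresponding to the evaluations $\widehat Z_{n-1}(u_2,\ldots,u_n;\pm u_1)$, which by the inductive hypothesis equal $Z_{n-1}(u_2,\ldots,u_n;\pm u_1)_{\ast}$. The remaining $u_1$-dependent factors should then be identified with $\Omega_n=\wlout{u_1}\prod_{i=2}^n\wne{u_1u_i}\wne{q^2\bar u_1 u_i}$ and the boundary-weight combinations $\wrone{u_1}+\wrout{u_1}\pm(\wrin{u_1}+\wrmone{u_1})$ of the $\ast$ specialization.

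The main obstacle will be this final matching. It requires meticulous bookkeeping of the $\sigma$-factor cancellations at $u_{n+1}=q^2\bar u_1$, the $(-1)^{n+1}$-type signs arising in \eqref{full}, and the precise $u_1$-dependence contributed by each of $\wrone{u_1}$, $\wrmone{u_1}$, $\wrin{u_1}$, $\wrout{u_1}$, and $\wlout{u_1}$ as given in Subsection~\ref{ast-spec}. The base case $n=1$ can be checked by direct computation from the $2\times 2$ determinant.
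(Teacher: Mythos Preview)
Your approach is essentially the same as the paper's: use Theorem~\ref{un}, verify the easy conditions~\eqref{1degree}--\eqref{1even}, and prove the evaluation~\eqref{1eval} by induction, exploiting the zero--pole cancellation at $u_{n+1}=q^2\bar u_1$ in column $n{+}1$ to reduce the $(n{+}1)\times(n{+}1)$ determinant to an $n\times n$ one and then matching the last row against the right-hand side of~\eqref{full}.

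Two minor points where the paper is cleaner than your sketch. First, rather than ``subtract suitable multiples of column~$n+1$ from the other columns to zero out row~1,'' the paper simply absorbs the prefactor $\sigma(q^2u_1u_{n+1})\sigma(q^2\bar u_1\bar u_{n+1})$ into column~$n+1$; at $u_{n+1}=q^2\bar u_1$ this column then literally becomes $(1,0,\ldots,0)^t$, so a single cofactor expansion along column~$n+1$ does the job with no column operations. Second, your degree argument via expansion along column~$n+1$ is not quite as stated: the cofactors for $i\le n$ still contain row~$n+1$ and hence further $u_{n+1}$-dependence, and only one of the $n$ factors $\sigma(q^2u_iu_{n+1})\sigma(q^2\bar u_i\bar u_{n+1})$ in the prefactor is cleared by a given entry; the paper instead uses the Leibniz expansion and a uniform degree/order count on each summand after multiplying by the prefactor. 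Neither issue is a gap---both are straightforward to fix---but you should be aware of them when writing out the details.
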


\begin{proof} We use Theorem~\ref{un}.
The case $n=1$ is easy to verify.
To show that \eqref{astpartfunctexpr} is indeed a Laurent polynomial in $u_{n+1}$, observe that
$$
\frac{\prod_{j=1}^n\sigma(\bar p u_j)\,\prod_{i=1}^n\prod_{j=1}^{n+1}\sigma(q^2u_iu_j)\sigma(q^2 \bar u_i \bar u_j)}
{\sigma(q^2)^{2n}\,\sigma(q^4)^{n(n-1)}\,\prod_{1\le i<j\le n}\sigma(u_i \bar u_j)^2}
\det_{1\le i,j\le n+1}\left( \Large \begin{cases}\frac{1}{\sigma(q^2u_iu_j)\,\sigma(q^2 \bar u_i \bar u_j)},&i\le n\\[1.5mm]
1 - \frac{\sigma(\bar u_{n+1} u_j)}{\sigma(\bar p u_j)},&i=n+1\end{cases}\right)
$$
is a Laurent polynomial in $u_{n+1}$.  It is divisible by $\prod_{i=1}^{n} \sigma(u_i \bar u_{n+1})$, as the determinant vanishes if we set $u_{n+1}= \pm u_j$, for $j=1,\ldots,n$, since then the $j$-th column coincides with the $(n+1)$-st column of the matrix.\footnote{It is also not difficult to see directly that \eqref{astpartfunctexpr} is in fact a Laurent polynomial in $u_1,\ldots,u_{n+1}$: Observe that
\begin{multline*}
\frac{\prod_{j=1}^{n+1}\sigma(\bar p v_j)\,\prod_{i=1}^n\prod_{j=1}^{n+1}\sigma(q^2u_iv_j)\sigma(q^2 \bar u_i \bar v_j)}
{\sigma(q^2)^{2n}\,\sigma(q^4)^{n(n-1)}}
\det_{1\le i,j\le n+1}\left(\Large \begin{cases}\frac{1}{\sigma(q^2u_iv_j)\,\sigma(q^2 \bar u_i \bar v_j)},&i\le n\\[1.5mm]
1 - \frac{\sigma(\bar u_{n+1} v_j)}{\sigma(\bar p v_j)},&i=n+1\end{cases}\right)
\end{multline*}
is a Laurent polynomial in the variables $u_1,\ldots,u_{n+1},v_1,\ldots,v_{n+1}$ that is antisymmetric in $u_1,\ldots,u_n$ and
in $v_1,\ldots,v_{n+1}$. It is also even in each $u_i$, $i=1,\ldots,n$, and odd in each $v_i$, $i=1,2,\ldots,n+1$, and thus
divisible by $\prod_{1 \le i < j \le n} \sigma(u_i \bar u_j) \prod_{1 \le i < j \le n+1} \sigma(v_i \bar v_j)$. After setting $v_{n+1} = u_{n+1}$, it is  also
divisible by $\sigma(\bar p u_{n+1})$, since then the bottom right corner entry of the matrix is $1$.}

We check that \eqref{astpartfunctexpr} has the desired properties from
Theorem~\ref{un}.

\emph{\eqref{1degree}:} To deduce the bounds on the degree and order of \eqref{astpartfunctexpr} as a Laurent polynomial in
$u_{n+1}$, use the Leibniz formula for the determinant and observe that, after multiplying each summand with the prefactor and cancelling as much as possible, we obtain a sum of rational functions. Each numerator has degree $2n$ or $2n-1$ (depending on whether or not the entry in the bottom right corner of the matrix is involved) and order $-2n$ or $-2n+1$. The denominators have degree $n$ and order $-n$, and since we already know that \eqref{astpartfunctexpr} is a Laurent polynomial in $u_{n+1}$, the bounds follow.

\emph{\eqref{1sym}, \eqref{1inv} and \eqref{1even}:} The expression is \ilse{readily checked to be} symmetric in $u_1,\ldots,u_{n}$, and also even in $u_i$, for each $i \in \{1,2,\ldots,n\}$.
Moreover, it is invariant under the transformation $(u_1,\ldots,u_{n+1},p) \mapsto (\bar u_1,\ldots, \bar u_{n+1}, \bar p)$. Note that interchanging respective left and right boundary constants in the $\ast$-specialization (see Subsection~\ref{ast-spec}) is
equivalent to the replacement $p \mapsto \bar p$.

\emph{\eqref{1eval}:} Let $X_n(u_1,\ldots,u_n;u_{n+1})$ denote \eqref{astpartfunctexpr}. Then we need to show the following.
\begin{multline}
\label{toshow}
X_n(u_1,\ldots,u_n;  q^2 \bar u_1) = \frac{(q -  p \bar q  u_1)\sigma(q^2  u_1^2)}
{2 \sigma(q^2)^2} \prod_{i=2}^{n} \frac{\sigma(q^2  u_1  u_i) \sigma(q^4 \bar u_1  u_i)}{\sigma(q^4)^2} \\
\times ( (\bar u_1 + \bar p) (\bar u_1 q +  u_1 \bar q)
X_{n-1}(u_2,\ldots,u_n; u_1)  \\ +
(-1)^{n+1} (\bar u_1 - \bar p) \sigma(\bar u_1 q)
X_{n-1}(u_2,\ldots,u_n;-u_1) )
\end{multline}
For this purpose, multiply the $(n+1)$-st column of the matrix in $X_n(u_1,\ldots,u_n;  q^2 \bar u_1)$ with
the prefactor $\sigma(q^2 u_1 u_{n+1}) \sigma(q^2 \bar u_1 \bar u_{n+1})$. After taking $u_{n+1} = q^2 \bar u_1$, the $(n+1)$-st column is
$(1,0,\ldots,0)^t$. Now expand the determinant with respect to this column to reduce the computation to the determinant of an $n \times n$ matrix. After moving the first column to the right, note that the first $n-1$ rows of this matrix match those of the matrices in $X_{n-1}(u_2,\ldots,u_n;u_1)$ and $X_{n-1}(u_2,\ldots,u_n;-u_1)$. Take the linear combination of the last rows in the right-hand side
according to the prefactors and compare it with the last row and the prefactor in
the left-hand side \ilse{to see that they are equal}. \end{proof}

\subsection{Specialization of the $\ast$ partition function at $u_{n+1}=p$}
Taking $u_{n+1} \to p$ in \eqref{astpartfunctexpr}, the last row of the determinant becomes
$(0,0,\ldots,0,1)$, and we obtain the following simplified formula.

\begin{cor}
\label{astpartfunct1}
We have the following formula for $Z_n(u_1,\ldots,u_n;p)_{\ast}$.
\begin{equation}
\label{astpartfunct1expr}
\frac{\prod_{i=1}^n\sigma(q^2 p u_i)\sigma(q^2 \bar p \bar u_i)\,
\prod_{i,j=1}^n\sigma(q^2u_iu_j)\sigma(q^2 \bar u_i \bar u_j)}
{\sigma(q^2)^{2n}\,\sigma(q^4)^{n(n-1)}\,\prod_{1\le i<j\le n}\sigma(u_i \bar u_j)^2}
\det_{1\le i,j\le n}\biggl(\frac{1}{\sigma(q^2u_iu_j)\,\sigma(q^2 \bar u_i \bar u_j)}\biggr)
\end{equation}
\end{cor}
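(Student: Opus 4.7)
The plan is to simply verify the claim stated in the sentence preceding the corollary: that substituting $u_{n+1}=p$ in the determinant formula \eqref{astpartfunctexpr} collapses the last row of the matrix to $(0,\ldots,0,1)$, after which a cofactor expansion and simplification of the prefactor yield \eqref{astpartfunct1expr}.

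First I would evaluate the entries of the last row at $u_{n+1}=p$. For $j\le n$ the entry is
$1-\sigma(\bar u_{n+1}u_j)/\sigma(\bar p u_j)$, and at $u_{n+1}=p$ this reads $1-\sigma(\bar p u_j)/\sigma(\bar p u_j)=0$. For $j=n+1$ the entry is $1-\sigma(\bar u_{n+1}u_{n+1})/\sigma(\bar p u_{n+1})$; since $\sigma(x)=x-\bar x$ gives $\sigma(1)=0$, this reduces to $1$. Hence after the specialization the last row is $(0,\ldots,0,1)$, as advertised.

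Next I would expand the determinant along this row. The $(n+1,n+1)$-cofactor is exactly the $n\times n$ determinant
$\det\bigl(1/(\sigma(q^2 u_iu_j)\sigma(q^2\bar u_i\bar u_j))\bigr)_{1\le i,j\le n}$ that appears in \eqref{astpartfunct1expr}. It remains to check that the prefactor of \eqref{astpartfunctexpr} specialized at $u_{n+1}=p$ matches the prefactor of \eqref{astpartfunct1expr}. For this, note two elementary cancellations: $\prod_{i=1}^{n}\sigma(q^2 u_iu_{n+1})\sigma(q^2\bar u_i\bar u_{n+1})$ becomes $\prod_{i=1}^{n}\sigma(q^2 p u_i)\sigma(q^2\bar p\bar u_i)$, and the factor $\prod_{j=1}^{n}\sigma(\bar p u_j)$ in the numerator cancels the factor $\prod_{i=1}^{n}\sigma(u_i\bar u_{n+1})=\prod_{i=1}^{n}\sigma(u_i\bar p)$ in the denominator since $\sigma(\bar p u_j)=\sigma(u_j\bar p)$ directly from the definition of $\sigma$. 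What remains is precisely the prefactor in \eqref{astpartfunct1expr}.

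I do not expect any genuine obstacle here: since Theorem~\ref{astpartfunct} is already established, the corollary is a pure specialization argument, and the only substantive points are the vanishing $\sigma(1)=0$ that trivializes the last row and the identity $\sigma(\bar p u_j)=\sigma(u_j\bar p)$ that produces the clean cancellation in the prefactor.
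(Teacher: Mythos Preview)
Your argument is correct and is exactly the approach the paper takes: it states just before the corollary that taking $u_{n+1}\to p$ turns the last row of \eqref{astpartfunctexpr} into $(0,\ldots,0,1)$, and your write-up simply makes explicit the cofactor expansion and the cancellation $\prod_{j=1}^n\sigma(\bar p u_j)/\prod_{i=1}^n\sigma(u_i\bar p)=1$ in the prefactor. (The paper also sketches an alternative proof via Theorem~\ref{u1}, but the primary derivation is the one you give.)
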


Next we provide a sketch of an alternative proof of Corollary~\ref{astpartfunct1} which avoids Theorem~\ref{astpartfunct} and Lemma~\ref{eval}, the latter being probably the most complicated ingredient in the proof of Theorem~\ref{astpartfunct}. This proof is also more in line with Kuperberg's approach in \cite{KuperbergRoof} as it uses a characterization of the partition function in terms of evaluations in $u_{1}$.

\begin{proof}[Sketch of an alternative proof using Theorem~\ref{u1}]
First we need to verify that
$$Z_n(\bar p q^2,u_2,\ldots,u_n;p)_{\ast}=0.$$
By the symmetry of Theorem~\ref{sym}, it suffices to show that $Z_n(u_1,\ldots,u_{n-1},\bar p q^2;p)_{\ast}=0$.

Consider the bottom bulk vertex $v$ in the central column and assume $u_n=\bar p q^2, u_{n+1}=p$.
Its label is $u_{n} u_{n+1} = q^2$, and using \eqref{trivial}, there are only four possible local configurations around $v$ that contribute a non-zero weight (which is in fact $1$ in all cases). Moreover, as
$\wrin{u_{n}}=\wrmone{u_{n}}=0$, there are only two possibilities (namely $\rone, \rout$) for the right boundary vertex $v_R$ adjacent to $v$.

It turns out that the weights of the remaining configurations cancel in pairs:
Fix a configuration and obtain another configuration by reversing the orientation of the bottom vertical edge incident with $v$ and the right horizontal edge incident with $v$. Then only the weight of $v_R$ has changed: It is $\frac{\sigma(\bar p^2 q^2)}{\sigma(q^2)}$ if the horizontal edge points to the right, and it is $-\frac{\sigma(\bar p^2 q^2)}{\sigma(q^2)}$ if the horizontal edge points to the left.

Now we may use Theorem~\ref{u1}. The cases $n=1,2$ are easy to verify. To show that \eqref{astpartfunct1expr} is a Laurent polynomial in $u_1,\ldots,u_{n}$, proceed as in the footnote of the proof of  Theorem~\ref{astpartfunct}.

\emph{\eqref{2degree}:} We use the Leibniz rule for the determinant in \eqref{astpartfunct1expr}. The degree and order of the numerator in $u_1$ of each summand---after multiplication with the prefactor and cancelling---are $4n-2$ and $-4n+2$, respectively, while the degree and order of the denominator are $2n-2$ and $-2n+2$, respectively.

\emph{\eqref{2sym},\eqref{2inv},\eqref{2even}, \eqref{2eval} and \eqref{2extra}:} The expression is symmetric in $u_1,\ldots,u_n$, invariant under the transformation
$(u_1,\ldots,u_n,p) \mapsto (\bar u_1,\ldots,\bar u_n, \bar p)$, and even in $u_i$, $i=1,2,\ldots,n$.
The identities obtained from Lemmas~\ref{eval1} and \ref{eval2} can be verified straightforwardly. It is also evident that the expression in \eqref{astpartfunct1expr} has a zero at
$u_1 = \bar p q^2$.
\end{proof}

A comparison with the partition function of all $\asm$s shows a close relation
to the $\ast$ partition function at $u_{n+1}=p$: Let $Y_n(u_1,\ldots,u_n;v_1,\ldots,v_n)$ denote the
partition function of $n \times n$ $\asm$s ($u_1,\ldots,u_n$ and $v_1,\ldots,v_n$ are the horizontal and vertical spectral parameters, respectively, and we use the bulk weights given in Table~\ref{weights}). Then
$$
Y_n(u_1,\ldots,u_n;v_1,\ldots,v_n) = \frac{\prod_{i,j=1}^{n} \sigma(q^2 u_i \bar v_j) \sigma(q^2 \bar u_i v_j)}{\sigma(q^4)^{n(n-1)} \prod_{1 \le i < j \le n} \sigma(u_i \bar u_j) \sigma(\bar v_i v_j)}
\det_{1 \le i, j \le n} \left( \frac{1}{\sigma(q^2 u_i \bar v_j) \sigma(q^2 \bar u_i v_j)} \right).
$$
To see this, consult for instance \cite[Theorem~10]{KuperbergRoof}: Set
$a= q^2$ in Kuperberg's formula and divide by $\sigma(q^4)^{n^2}$ to take the difference in the choice of vertex weights into account.
A determinant formula for the partition function of the six-vertex model on an $n \times n$ grid with domain wall boundary conditions that is equivalent to the one above was first derived by
Izergin \cite[Eq. (5)]{Ize87}, using results of Korepin \cite{Ize87,Kor82}. Corollary~\ref{astpartfunct1} now implies that
\begin{equation}
\label{ast-asm}
Z_n(u_1,\ldots,u_n;p)_{\ast} =
\prod_{i=1}^n \frac{\sigma(q^2p u_i)\sigma(q^2 \bar p \bar u_i)}{\sigma(q^2)^2}
Y_n(u_1,\ldots,u_n; \bar u_1, \ldots, \bar u_n).
\end{equation}

In particular, \eqref{ast-asm} now implies that there is the same number of order $n$ $\asm$s as there is of order $n$ $\ast$s, since $Y_n(1,\ldots,1;1,\ldots,1)$ is the number of order $n$ $\asm$s.

\subsection{Specialization of the $\ast$ partition function at $u_{n+1}=p$ and $q=e^{\frac{i \pi}{6}}$}

Next we specialize $q=e^{\frac{i \pi}{6}}$ and obtain an expression involving \emph{Schur functions} $\schur_{\lambda}(x_1,\ldots,x_n)$.
We use the well-known determinant formula
$$
\schur_{\lambda}(x_1,\ldots,x_n) = \frac{\det_{1 \le i , j \le n} (x_i^{\lambda_j+n-j})}{\prod_{1 \le i < j \le n} (x_i - x_j)},
$$
where $\lambda=(\lambda_1,\ldots,\lambda_n)$,
and if the partition has less than $n$ parts, we add the appropriate number of zero parts to the partition.

\begin{theo}
\label{astschurtheo}
We have the following Schur function expression for $Z_n(u_1,\ldots,u_n;p)_{\ast}$ at $q=e^{\frac{i \pi}{6}}$.
\begin{equation}
\label{astschur}
3^{-\binom{n+1}{2}} \prod_{i=1}^{n} (p ^2 u_i^2 + 1 + \bar p^2 \bar u_i^2)  \schur_{(n-1,n-1,n-2,n-2,\ldots,1,1)}(u_1^2,\bar u_1^2,\ldots,u_n^2, \bar u_n^2)
\end{equation}
\end{theo}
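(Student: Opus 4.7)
The plan is to start from the explicit determinant formula of Corollary~\ref{astpartfunct1}, specialize $q = e^{i\pi/6}$, and identify the resulting expression with the right-hand side of \eqref{astschur} via a Cauchy-Binet / Schur alternant computation. I would organize the argument in three stages.

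First, I would record the elementary trigonometric consequences of $q = e^{i\pi/6}$: one has $q^2 + \bar q^2 = 1$, $q^4 + \bar q^4 = -1$, and $\sigma(q^2) = \sigma(q^4) = i\sqrt{3}$. Expanding the definition of $\sigma$ yields the key identity
\[
\sigma(q^2 x)\,\sigma(q^2 \bar x)\;=\;q^4 + \bar q^4 - x^2 - \bar x^2\;=\;-(1 + x^2 + \bar x^2)
\]
for any $x$. Applying this to each factor of the form $\sigma(q^2 p u_i)\sigma(q^2 \bar p \bar u_i)$ and $\sigma(q^2 u_i u_j)\sigma(q^2 \bar u_i \bar u_j)$ in the prefactor of \eqref{astpartfunct1expr}, and collecting the resulting powers of $i\sqrt{3}$ from the denominator, immediately produces the factor $\prod_{i=1}^n (p^2 u_i^2 + 1 + \bar p^2 \bar u_i^2)$ and a global power $3^{-n(n+1)/2}$. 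A careful sign count (the contribution $(-1)^{n(n-1)/2}$ from $-(1 + x^2 + \bar x^2)$ factors and from the determinant is the one that needs to be tracked) reduces Theorem~\ref{astschurtheo} to the universal identity
\[
(-1)^{n(n-1)/2}\,\frac{\prod_{i,j=1}^{n}(1 + x_i x_j + \bar x_i \bar x_j)}{\prod_{1\le i<j\le n}\sigma(u_i\bar u_j)^2}\,\det_{1\le i,j\le n}\!\left(\frac{1}{1 + x_i x_j + \bar x_i \bar x_j}\right) \;=\; s_{(n-1,n-1,\dots,1,1)}(x_1,\bar x_1,\dots,x_n,\bar x_n),
\]
where $x_i := u_i^2$; this no longer involves $p$ or $q$.

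Second, to evaluate the determinant I would use the factorization $1 + t + t^2 = (1 - \omega t)(1 - \bar\omega t)$ with $\omega = e^{2\pi i/3}$, together with the partial fraction decomposition
\[
\frac{1}{1 + x_i x_j + x_i^2 x_j^2}\;=\;\frac{1}{1-\omega}\cdot\frac{1}{1 - \omega x_i x_j} \;+\; \frac{1}{1-\bar\omega}\cdot\frac{1}{1 - \bar\omega x_i x_j}.
\]
Expanding each geometric series and combining terms yields the entrywise expansion $\sum_{k\ge 0} c_k\,(x_i x_j)^k$ with coefficients $c_k \in \{1,-1,0\}$ according as $k \bmod 3$ is $0,1,2$, so only exponents $k \not\equiv 2 \pmod 3$ contribute. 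Applying Cauchy-Binet then writes the determinant as a sum over strictly increasing sequences $k_1 < \cdots < k_n$ (all $\not\equiv 2 \pmod 3$) of $\prod_\ell c_{k_\ell}\,\det(x_i^{k_j})^2$.

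Third, I would match this sum to the bialternant definition of the right-hand side. With $(z_1,\dots,z_{2n}) = (x_1, \bar x_1, \dots, x_n, \bar x_n)$ and $\lambda = (n-1,n-1,\dots,1,1,0,0)$, the exponents $\lambda_j + 2n - j$ are precisely the integers in $\{0, 1, \dots, 3n-2\}$ that are \emph{not} congruent to $2 \pmod 3$; this is exactly the index set selected by the vanishing of $c_k$ in Step~2. The Schur alternant $\det(z_i^{\lambda_j+2n-j})$ over the Vandermonde $\prod_{i<j}(z_i - z_j)$ therefore has the same monomial support as the Cauchy-Binet sum, and the products $\prod_i f(u_i,u_j)$ together with $\prod_{i<j}\sigma(u_i\bar u_j)^{-2}$ account for the Vandermonde and the universal prefactor; one then verifies that the overall sign comes out correctly using the action of the interleaving permutation that sends $(x_1,\bar x_1,\dots,x_n,\bar x_n)$ to $(x_1,\dots,x_n,\bar x_n,\dots,\bar x_1)$.

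The main obstacle is Step~3: matching the Cauchy-Binet expansion---with its $\pm 1$ coefficients coming from $c_k$---to the Schur alternant, which amounts to a careful sign and Vandermonde bookkeeping involving the partition $\lambda$. A cleaner but equivalent way to close the argument is to invoke the identity \eqref{ast-asm}, reducing the problem to the evaluation of the plain Izergin-Korepin ASM partition function $Y_n(u_1,\ldots,u_n;\bar u_1,\ldots,\bar u_n)$ at $q = e^{i\pi/6}$, which is exactly Okada's Schur function formula for the $\asm$ partition function at a sixth root of unity; then Theorem~\ref{astschurtheo} follows by multiplying by the explicit $p$-prefactor $\prod_i \sigma(q^2 p u_i)\sigma(q^2\bar p \bar u_i)/\sigma(q^2)^2$ already computed in Step~1.
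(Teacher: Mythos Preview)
Your closing paragraph is exactly the paper's proof: start from Corollary~\ref{astpartfunct1} (equivalently, from \eqref{ast-asm}), set $q=e^{i\pi/6}$, and cite Okada's Schur function evaluation of the Izergin--Korepin partition function at a primitive sixth root of unity \cite[Theorem~2.4~(1), 2nd eq.]{OkadaCharacters} (also obtained by Stroganov \cite{StroNewWay}). The paper says no more than that.

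Your Steps 1--3 constitute a genuinely different, self-contained route: rather than quoting Okada's identity you are reproving that special case by a Cauchy--Binet expansion of the Izergin--Korepin determinant and an explicit match with the bialternant for $\lambda=(n-1,n-1,\dots,1,1)$. This buys an elementary proof that does not depend on the external reference, at the cost of the Vandermonde and sign bookkeeping you flag in Step~3. One slip to fix before that bookkeeping can succeed: in Step~2 you expand $1/(1+x_ix_j+x_i^2x_j^2)$, but the actual entry after Step~1 is $1/(1+x_ix_j+\bar x_i\bar x_j)$, which differs by a factor of $x_ix_j$. The surviving exponents in the entrywise power series are therefore those $\not\equiv 0\pmod 3$, not $\not\equiv 2\pmod 3$; this extra monomial factor must be absorbed into the prefactor (which itself carries compensating $\bar x_i\bar x_j$ factors from $\prod_{i,j}(1+x_ix_j+\bar x_i\bar x_j)$) before the match with the alternant exponents $\lambda_j+2n-j$ in Step~3 can go through. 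Once that shift is tracked, the argument closes.
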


\begin{proof}
Use \eqref{astpartfunct1expr} and \cite[Theorem 2.4 (1), 2nd eq.]{OkadaCharacters}.
We note that the Schur function expression for the ASM partition function at $q=e^{\frac{i \pi}{6}}$ was obtained simultaneously by Okada and
by Stroganov \cite[Eq. (17)]{StroNewWay}.
\end{proof}

Let us point out that formulas for the specializations of
$Z_n(u_1,\ldots,u_n;p)_{\ast}$ at $q=e^{\frac{i \pi}{4}},e^{\frac{i \pi}{8}},e^{\frac{i \pi}{12}}$ are also provided in \cite[Theorem 2.4 (1)]{OkadaCharacters}. These specializations correspond to the $x$-enumerations of $\asm$s (or $\ast$s, see also
Subsection~\ref{xenum}) for $x=0,2,3$, respectively.

\begin{rem}
In \cite{AyyBehFis16b} it is shown that the Schur function in \eqref{astschur} factorizes as
\begin{equation}
\label{ASTSchurfact}
\begin{aligned}
\schur_{(2n-1,2n-1,2n-2,2n-2,\ldots,1,1)}&(u_1,\bar u_1,u_2,\bar u_2,\ldots,u_{2n}, \bar u_{2n})
 \\ &=\sp_{(n-1,n-1,n-2,n-2,\ldots,1,1)}(u_1,\ldots,u_{2n}) \oeven_{(n,n,n-1,n-1,\ldots,1,1)}(u_1,\ldots,u_{2n}), \\
\schur_{(2n,2n,2n-1,2n-1,\ldots,1,1)}(u_1,&\bar u_1,u_2,\bar u_2,\ldots,u_{2n+1},
\bar u_{2n+1})
 \\ &=\so_{(n,n,n-1,n-1,\ldots,1,1)}(u_1,\ldots,u_{2n+1}) \so_{(n,n,n-1,n-1,\ldots,1,1)}(-u_1,\ldots,-u_{2n+1}),
\end{aligned}
\end{equation}
where the notation $\so$ and $\oeven$ for the orthogonal characters is the same as that used in \cite{CiuKra09}. These factorizations are very similar to those obtained by Ciucu and Krattenthaler in that paper.
\end{rem}

\subsection{Inversion numbers: A generalization of Theorem~\ref{maxmone}}

The inversion number of an order $n$ $\asm$ $A=(a_{i,j})$ is defined as
$$
\inv(A) = \sum\limits_{1 \le i' < i \le n, 1 \le j' \le j \le n} a_{i' j} a_{i j'}.
$$
If $A$ is a permutation matrix, $\inv(A)$ is the number of inversions
of the permutation $\pi=(\pi_1,\ldots,\pi_n)$, where $\pi_i$ is the column for the unique
$1$ in row $i$. The inversion number of $\asm$s was first defined in \cite[Eq. (18)]{RR86}, where it was referred to as the number of positive inversions. Another generalization of the inversion number of permutations is obtained if, instead of summing over all $1 \le j' \le j \le n$, we sum over all $1 \le j' < j \le n$, and this statistic, which in fact evaluates to $\inv(A)+\mu(A)$, appeared in \cite[p. 344]{DPPMRR}.
In terms of the six-vertex model, we have
$$
\inv(A) = \# \sw \in \c(A),
$$
where $\c(A)$ denotes the six-vertex configuration of $A$. This is because
$$\inv(A) = \sum_{i,j=1}^{n} \left( \sum_{1 \le i' < i} a_{i' j} \right)
\left( \sum_{1 \le j' \le j} a_{i j'} \right),$$
i.e., $\inv(A)$ is the number of entries $a_{i,j}$ of $A$ such that the sum of entries in the same column above $a_{i,j}$ (not including $a_{i,j}$) is $1$ and the sum of entries in the same row to the left of $a_{i,j}$ (including $a_{i,j}$) is $1$.

It is not hard to see that also
$\inv(A) = \sum_{1 \le i \le i' \le n, 1 \le j < j' \le n} a_{i' j} a_{i j'}$ or, equivalently,
$\inv(A) = \# \ne \in \c(A)$. This implies
$$
\inv(A) = \frac{1}{2} \left(\# \sw \in \c(A) + \# \ne \in \c(A) \right).
$$
We generalize this version of the inversion number to $\ast$s straightforwardly: Suppose $T \in \ast(n)$. Then let
$$
\inv_{\da}(T) = \frac{1}{2} \left(\# \sw \in \c(T) + \# \ne \in \c(T) \right),
$$
where $\c(T)$ is the triangular six-vertex configuration associated with $T$.

It can be shown (where we leave the details of the derivation to the reader)
that the inversion number of $T=(t_{i,j})_{1 \le i \le n, i \le j \le 2n-i}$ can
be written directly in terms of its entries as
$$
\inv_\da(T)=\sum_{i'<i\atop j'\le j}t_{i'j}t_{ij'}=\sum_{i'\le i\atop j'<j}t_{i'j}t_{ij'}=\sum_{i'<i\atop j'<j}t_{i'j}t_{ij'}-\mu_\da(T),
$$
where the summation is over all $i',j,i,j'$ such that $t_{i' j}$ and $t_{i j'}$
are defined. It follows from these expressions that $\inv_{\da}(T)$ is an integer, while it follows from the definition that $\inv_{\da}(T)$ is non-negative.

\begin{theo}
\label{maxmone-gen}
The joint distribution of the statistics $\mu$ and $\inv$ on the set $\asm(n)$ \ilse{is the same as} the joint distribution of the statistics $\mu_{\da}$ and $\inv_{\da}$ on the set $\ast(n)$, i.e., for all integers $m$ and $i$ we have
$$
|\{A \in \asm(n) \mid \mu(A)=m,  \inv(A)=i\}| = |\{T \in \ast(n) \mid \mu_{\da}(T)=m, \inv_{\da}(T)=i \}|.
$$
\end{theo}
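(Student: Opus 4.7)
\medskip

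The plan is to deduce Theorem~\ref{maxmone-gen} from the partition-function identity~\eqref{ast-asm} via a single simultaneous specialization of the spectral parameters and the boundary parameter $p$ that turns both sides into bivariate generating functions for the statistics of interest. Specifically, I would set $u_1 = \cdots = u_n = u_{n+1} = p = u$, keeping $u$ and $q$ as free variables, so that every bulk vertex on either side of~\eqref{ast-asm} carries the label $u^2$ while the boundary contribution becomes configuration-independent.

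Under the $\ast$-specialisation of Subsection~\ref{ast-spec}, at $p = u$ one computes $\wlone{u} = \wrone{u} = 0$ (so the only non-vanishing terms in $Z_n(u,\ldots,u;u)_\ast$ correspond to $\ast$s), $\wlin{u} = \wrin{u} = 0$ always, and crucially $\wlmone{u} = \wlout{u} = \sigma(q^2 u^2)/\sigma(q^2)$ together with $\wrmone{u} = \wrout{u} = \sigma(q^2 \bar u^2)/\sigma(q^2)$. Thus the boundary contributes to each configuration the same $T$-independent factor $\bigl(\sigma(q^2 u^2)\sigma(q^2 \bar u^2)/\sigma(q^2)^2\bigr)^n$, which is exactly the prefactor $\prod_{i=1}^n \sigma(q^2 p u_i)\sigma(q^2 \bar p \bar u_i)/\sigma(q^2)^2$ appearing in~\eqref{ast-asm} at this specialization, so it cancels from both sides.

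Writing $\alpha := \sigma(q^2 u^2)/\sigma(q^4)$ and $\beta := \sigma(q^2 \bar u^2)/\sigma(q^4)$, the bulk weight of any configuration factors as $\alpha^{\#\sw + \#\ne}\beta^{\#\nw + \#\se}$ since $\wone = \wmone = 1$. For $T \in \ast(n)$, Corollary~\ref{char}(\ref{char-1}) in the form ``each of the $n$ rows of $T$ sums to $1$'' gives $\#\one - \#\mone = n$ over the $n^2$ bulk cells, so $\#\one + \#\mone = n + 2\mu_\da(T)$; combined with $\#\sw + \#\ne = 2\inv_\da(T)$ by definition and the total count $n^2$, this forces $\#\nw + \#\se = n^2 - n - 2\mu_\da(T) - 2\inv_\da(T)$. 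The analogous bookkeeping on the ASM side (using domain-wall boundary conditions to get $\#\one - \#\mone = n$ and $\#\sw = \#\ne = \inv(A)$) gives $\#\nw + \#\se = n^2 - n - 2\mu(A) - 2\inv(A)$ for every $A \in \asm(n)$.

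Substituting into~\eqref{ast-asm} and cancelling the common prefactor then yields
\[
\sum_{T\in\ast(n)} \alpha^{2\inv_\da(T)}\beta^{n^2 - n - 2\mu_\da(T) - 2\inv_\da(T)} \;=\; \sum_{A\in\asm(n)} \alpha^{2\inv(A)}\beta^{n^2 - n - 2\mu(A) - 2\inv(A)}.
\]
One checks directly that $\alpha + \beta = \sigma(q^2)(u^2+\bar u^2)/\sigma(q^4)$ and $\alpha - \beta = (q^2+\bar q^2)\sigma(u^2)/\sigma(q^4)$, which shows $\alpha$ and $\beta$ are algebraically independent as functions of $(u,q)$; hence the displayed identity is in fact an identity of polynomials in two independent indeterminates $\alpha$ and $\beta$. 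Substituting $t := (\alpha/\beta)^2$ and $x := \beta^{-2}$ and dividing by $\beta^{n^2-n}$ yields $\sum_T t^{\inv_\da(T)}x^{\mu_\da(T)} = \sum_A t^{\inv(A)}x^{\mu(A)}$, which is the desired joint-distribution identity. I expect the main technical point to be the bulk counting in the third paragraph, which is essentially the standard six-vertex bookkeeping adapted to the triangular domain $\mathcal{T}_n$ using the row-sum characterisation of $\ast$-configurations.
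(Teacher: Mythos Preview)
Your argument is correct and is essentially the paper's own proof: specialize $u_1=\cdots=u_n=p$ in~\eqref{ast-asm}, observe that the boundary contribution becomes the configuration-independent factor $\bigl(\sigma(q^2u^2)\sigma(q^2\bar u^2)/\sigma(q^2)^2\bigr)^n$ and cancels against the prefactor, and then extract the joint distribution from the algebraic independence of $\alpha=\sigma(q^2u^2)/\sigma(q^4)$ and $\beta=\sigma(q^2\bar u^2)/\sigma(q^4)$ over~$\mathbb{C}$ (the paper does the bulk bookkeeping via the complementary statistic $\inv'=\tfrac12(\#\nw+\#\se)$ and the relation $\mu=\binom{n}{2}-\inv-\inv'$, which is equivalent to your direct count). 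One small caveat: your formulas for $\alpha\pm\beta$ do not by themselves establish algebraic independence---indeed $\alpha$ and $\beta$ satisfy a quadratic relation over $\mathbb{C}(q)$ coming from $u^2\cdot\bar u^2=1$---so you should supply the paper's leading-term-in-$u$ argument (or a Jacobian computation) to rule out any relation with constant coefficients.
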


\begin{proof}
Define the following complementary inversion numbers for $A \in \asm(n)$ and
$T \in \ast(n)$:
$$
\inv'(A) = \frac{1}{2} \left( \# \se \in \c(A) + \# \nw \in \c(A) \right)
\qquad
\inv'_{\da}(T) = \frac{1}{2} \left( \# \se \in \c(T) + \# \nw \in \c(T) \right)
$$
As there are $n$ more $1$'s in an $\asm$ (resp. $\ast$) of order $n$ than $-1$'s, we have
$$
\mu(A) = \binom{n}{2} - \inv(A) - \inv'(A) \quad \text{and} \quad
\mu_{\da}(T) = \binom{n}{2} - \inv_{\da}(T) - \inv'_{\da}(T),
$$
and so it suffices to show
$$
\sum_{A \in \asm(n)} x^{\inv(A)} y^{\inv'(A)} =
\sum_{T \in \ast(n)} x^{\inv_{\da}(T)} y^{\inv'_{\da}(T)}.
$$
The left hand side can be studied using the partition function $Y_n$. Indeed, we have
\begin{equation}
\label{asm-side}
Y_n(p,\ldots,p;\bar p,\ldots, \bar p) = \sum_{A \in \asm(n)}
\left( \frac{\sigma(q^2 p^2)}{\sigma(q^4)} \right)^{ 2 \inv(A)} \left( \frac{\sigma(q^2 \bar p^2)}{\sigma(q^4)} \right)^{2 \inv'(A)},
\end{equation}
since the (bulk) weights for the $\asm$ partition function $Y_n(u_1,\ldots,u_n;v_1,\ldots,v_n)$
are chosen as in Table~\ref{weights} and the label of the vertex in row $i$ and column $j$ is $u_i \bar v_j$. Now it is important to note that $\frac{\sigma(q^2 p^2)}{\sigma(q^4)}$ and $\frac{\sigma(q^2 \bar p^2)}{\sigma(q^4)}$ are algebraically independent if $p,q$ are (algebraically independent) indeterminates. Indeed, let $F(x,y)=\sum_{s,t \ge 0} a_{s,t} x^s y^{t}$ be a polynomial in $x,y$ and
$d$ its total degree. Then $F\left(\frac{\sigma(q^2 p^2)}{\sigma(q^4)},\frac{\sigma(q^2 \bar p^2)}{\sigma(q^4)} \right)$ is a Laurent polynomial in
$p$ of degree $2 d$ with leading coefficient
$$
\frac{1}{\sigma(q^4)^{d}} \sum_{s,t \ge 0, s+t=d} a_{s,t} (-1)^t q^{2s-2t},
$$
and thus non-zero.

On the $\ast$ side, we consider the specialization $Z_n(p,\ldots,p;p)_{\ast}$.  By \eqref{ast1g}, the sum in the generating function $Z_n(p,\ldots,p;p)_{\ast}$ can be taken over all odd $\dasasm$-triangles $T'$ of order $n$ with $\n_{-1}(T')=n$, and by \eqref{ast2g}, all left boundary weights are $\frac{\sigma(q^2 p^2)}{\sigma(q^2)}$, while all right boundary weights are $\frac{\sigma(q^2 \bar p^2)}{\sigma(q^2)}$. Hence, \ilse{by Table~\ref{weights},}
$$
Z_n(p,\ldots,p;p)_{\ast} = \left( \frac{\sigma(q^2 p^2)
\sigma(q^2 \bar p^2)}{\sigma(q^2)^2} \right)^n \sum_{T \in \ast(n)} \left( \frac{\sigma(q^2 p^2)}{\sigma(q^4)} \right)^{ 2 \inv_{\da}(T)} \left( \frac{\sigma(q^2 \bar p^2)}{\sigma(q^4)} \right)^{2 \inv'_{\da}(T)},
$$
and the assertion now follows from \eqref{ast-asm} and \eqref{asm-side}.
\end{proof}

\subsection{Position of the $1$ in the top row of an $\ast$}

A statistic that was extensively studied for $\asm$s is the position of the $1$ in the top row, see for instance \cite{DPPMRR,ZeilbergerRefinedASMProof}. In this subsection, we study the analogous statistic on $\ast$s and derive a formula for the number of $\ast$s of order $n$ that have the $1$ in the top row in a prescribed position in terms of the doubly refined enumeration of $\asm$s with respect to the positions of the $1$'s in the top row and leftmost column. Explicit formulas for this doubly refined enumeration of $\asm$s can in turn be derived from results in \cite{StroNewWay}, see also \cite{FischerRefEnumASM,FischerLinRel,BehrendQuad}.

For an $\ast$ $T$, let
$$\kappa_{\da}(T)= \text{(position of the $1$ in the top row of $T$)}-1$$
and, for an $\asm$ $A$, let
\begin{multline*}
\kappa(A) =  \text{(position of the $1$ in the top row of $A$)} + \text{(position of the $1$ in the first column of $A$)}-2.
\end{multline*}
We define two generating functions as follows.
$$
\mathcal{K}_n^{\ast}(x) = \sum_{T \in \ast(n)} x^{\kappa_{\da}(T)}, \qquad
\mathcal{K}_n^{\asm}(x) = \sum_{A \in \asm(n)} x^{\kappa(A)}
$$
For example, $\mathcal{K}_n^{\ast}(x)=2+x+x^2+x^3+2 x^4$ and
$\mathcal{K}_n^{\asm}(x)=2+2 x^2+2 x^3 + x^4$.
By setting $u_2=\ldots=u_n=1, p=1,
q=e^{\frac{i \pi}{6}}$ in \eqref{ast-asm}, it can be shown (where we leave the details of the derivation to the reader) that
$$
(2-x) x^2 \mathcal{K}_n^{\ast}(x) = (x^2-x+1) \mathcal{K}_n^{\asm}(x)+(1-x)(x^{2n}-1) A_{n-1},
$$
where $A_n$ denotes the number of $n \times n$ ASMs.
This implies
$$
2 T_{n,k} = T_{n,k-1} + D_{n,k}-D_{n,k+1}+D_{n,k+2} + (\delta_{k,-1}-\delta_{k,-2} + \delta_{k,2n-2} - \delta_{k,2n-1}) A_{n-1},
$$
with $T_{n,k} = |\{T \in \ast(n) \mid \kappa_{\da}(T)=k\}|$ and
$D_{n,k}= |\{A \in \asm(n) \mid \kappa(A)=k\}|$. The recursion can be solved and we obtain
$$
T_{n,k} = 3 \sum_{i=0}^{k-3} 2^{i-k} D_{n,i+3} - \frac{1}{4} D_{n,k+1} + \frac{1}{2} D_{n,k+2} + (2^{-k+1}-\frac{3}{2} \delta_{k,0} - \frac{3}{4} \delta_{k,1} + \frac{1}{2} \delta_{k,2n-2} ) A_{n-1}.
$$
If $B_{n,i,j}$ denotes the number of $n \times n$ $\asm$s that have the $1$ in the top row in column $j$ and the $1$ in the leftmost column in row $i$, then
$$
D_{n,k} = \sum_{i+j=k+2} B_{n,i,j}.
$$

\subsection{Another interesting $\ast$ statistic}
\label{unique1ast}

Each left or right boundary entry of an $\ast$ $T$ is either $0$ or $1$. Let $\l(1,T)$ denote the number of
$1$'s on the left boundary, and $\r(0_1,T)$ the number of $0$'s on the right boundary that are contained in a column with sum $1$.  Define the statistic
$$
\rho(T) = \l(1,T) + \r(0_1,T)+1
$$
for $T \in \ast(n)$. We have numerical evidence that, for any integers $n$ and $r$,
$$
|\{A \in \ast(n) \mid \rho(A)=r \}| = |\{A=(a_{i,j}) \in \asm(n) \mid  a_{1,r}=1 \}|.
$$
In a forthcoming paper, we will study this statistic and derive related constant term identities.
Interestingly, data suggests that the joint distribution of $\mu_{\da}, \inv_{\da}$ and $\rho$ on $\ast(n)$ is not the same as the joint distribution of $\mu, \inv$ and the position of the unique $1$ in the top row of an $\asm$ on $\asm(n)$.

\subsection{$2$-enumeration and $3$-enumeration of $\ast$s}
\label{xenum}

The following weighted enumeration of $\asm$s is referred to as the $x$-enumeration:
$$
\sum_{A \in \asm(n)} x^{\mu(A)}
$$
If $x=0$, then we obviously obtain $n!$ since $\asm$s with no $-1$ are precisely permutation matrices. Besides $x=0,1$, the numbers are also round for
$x=2$ (see \cite{DPPMRR}) and $x=3$ (see \cite{KuperbergASMProof}). Theorem~\ref{maxmone} states that the $x$-enumeration of $\ast$s is the same as the $x$-enumeration of $\asm$s for any $x$. Hence we obtain the following.

\begin{cor}
\label{23}
The $2$-enumeration of order $n$ $\ast$s is $2^{\binom{n}{2}}$, while the
$3$-enumeration of order $2n+1$ $\ast$s is
$$
3^{n(n+1)} \prod_{i=1}^{n} \left( \frac{(3i-1)!}{(n+i)!} \right)^2
$$
and
$$
3^{n(n+2)}  \frac{(3n+2)! n!}{(2n+1)! ^2} \prod_{i=1}^{n} \left( \frac{(3i-1)!}{(n+i)!} \right)^2
$$
for order $2n+2$ $\ast$s.
\end{cor}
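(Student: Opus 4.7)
The plan is to deduce Corollary~\ref{23} directly from Theorem~\ref{maxmone} together with known weighted enumeration results for ordinary ASMs. Since Theorem~\ref{maxmone} asserts that $\mu$ on $\asm(n)$ and $\mu_{\da}$ on $\ast(n)$ have the same distribution, we immediately get, for any $x$,
\begin{equation*}
\sum_{T \in \ast(n)} x^{\mu_{\da}(T)} \;=\; \sum_{A \in \asm(n)} x^{\mu(A)},
\end{equation*}
so the entire statement reduces to quoting the known $x$-enumerations of $\asm(n)$ at $x=2$ and $x=3$.

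For the $2$-enumeration, I would cite the result of Mills, Robbins and Rumsey~\cite{DPPMRR} that $\sum_{A \in \asm(n)} 2^{\mu(A)} = 2^{\binom{n}{2}}$; substituting the $\mu \leftrightarrow \mu_{\da}$ equidistribution gives the first formula. For the $3$-enumeration, I would invoke Kuperberg's evaluation in~\cite{KuperbergASMProof}, where he uses the Izergin--Korepin determinant at a specific root of unity to obtain closed product forms, separately for odd and even order. Rewriting his formulas in the form displayed in the statement (splitting $n \mapsto 2n+1$ and $n \mapsto 2n+2$) then yields the two stated products. Alternatively, one could derive the same expressions by specializing the Schur function formula in Theorem~\ref{astschurtheo} at $u_1=\cdots=u_n=p=1$ and $q=e^{i\pi/6}$, since at this specialization the boundary weights become $\frac{\sigma(q^4)}{\sigma(q^2)}=\sqrt{3}$ (which accounts for a factor of $3$ per boundary vertex) and then applying Okada's closed product evaluation \cite[Theorem~2.4]{OkadaCharacters} of the relevant Schur function at the sixth-root-of-unity specialization; this route would give a self-contained derivation using only machinery developed in this paper.

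There is no genuine obstacle here: the work is entirely one of bookkeeping. The only subtlety worth writing out is the index-matching between Kuperberg's formulas (stated in terms of $n$) and the order-$2n+1$ vs.~order-$2n+2$ split in the corollary, and checking that the extra factor $\frac{(3n+2)!\,n!}{(2n+1)!^2}$ in the even-order case correctly captures the asymmetry between odd and even orders in Kuperberg's product. I would present the proof as two short paragraphs, one invoking \cite{DPPMRR} for $x=2$ and one invoking \cite{KuperbergASMProof} for $x=3$, each preceded by a single sentence quoting Theorem~\ref{maxmone}.
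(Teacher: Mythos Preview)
Your main approach is correct and is exactly what the paper does: the paper's entire ``proof'' is the sentence preceding the corollary, which invokes Theorem~\ref{maxmone} to transfer the $x$-enumeration from $\ast$s to $\asm$s and then cites \cite{DPPMRR} for $x=2$ and \cite{KuperbergASMProof} for $x=3$.

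One small correction to your alternative route: Theorem~\ref{astschurtheo} is already at $q=e^{i\pi/6}$, which corresponds to $x=1$ (plain enumeration), not to $x=3$; as the paper notes just after that theorem, the $x=2$ and $x=3$ enumerations arise instead from the specializations $q=e^{i\pi/8}$ and $q=e^{i\pi/12}$ in \cite[Theorem~2.4(1)]{OkadaCharacters}, so your remark about boundary weights contributing a factor of~$3$ at $q=e^{i\pi/6}$ is not right.
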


Curiously, as demonstrated in \cite{ProppDomino} (see also \cite[Remark 4.3]{CiucuReflect}), the $2$-enumeration of $\asm$s can be translated into the enumeration of \emph{perfect matchings} of  certain regions of the square grid (or, equivalently, into the plain enumeration of the domino tilings of the \emph{Aztec diamond}). Similarly, we may now translate our result on the $2$-enumeration of $\ast$s into a matching enumeration result.

In order to do so, define for each integer $n>0$ a graph $\mathcal{Q}_n$ as indicated in Figure~\ref{m5} (left). It consists of $n$ rows of centered sequences of tilted squares (shaded in our drawing and referred to as diamonds in the following) of lengths $2n-1, 2n-3,\ldots,1$, from top to bottom. The bottom vertices of the columns (except for the middle column) play a special role and these vertices are denoted by their column position, counted from the left. Note that $\mathcal{Q}_{n}$ is bipartite and the vertices $1,2,\ldots,n-1,n+1,\ldots,2n-1$ all lie in the same vertex class. There are $n-1$ more vertices in this class than in the other class, and hence, in order to obtain a graph that could possibly possess a perfect matching, we may delete $n-1$ vertices in $\{1,2,\ldots,n-1,n+1,\ldots, 2n-1\}$ from $\mathcal{Q}_{n}$. There is an example of such a perfect matching in Figure~\ref{m5} (right), where we delete vertices $1,2,4,9$.

\begin{figure}
\scalebox{0.3}{
\psfrag{l1}{\Huge $1$}
\psfrag{l2}{\Huge$2$}
\psfrag{l3}{\Huge $3$}
\psfrag{l4}{\Huge $4$}
\psfrag{r1}{\Huge $9$}
\psfrag{r2}{\Huge $8$}
\psfrag{r3}{\Huge $7$}
\psfrag{r4}{\Huge $6$}
\includegraphics{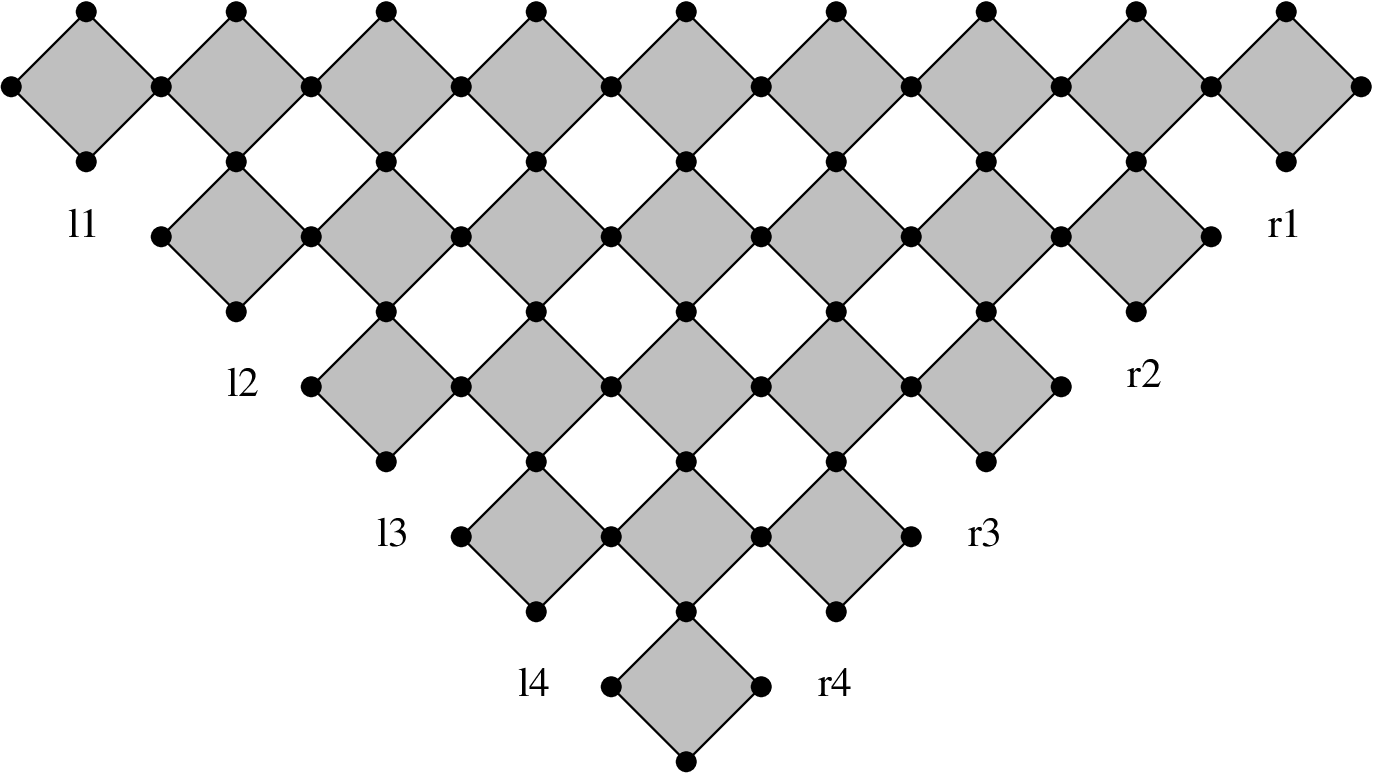}} \qquad
\scalebox{0.3}{
\psfrag{1}{\Huge $1$}
\psfrag{0}{\Huge $0$}
\psfrag{-1}{\Huge $-1$}
\includegraphics{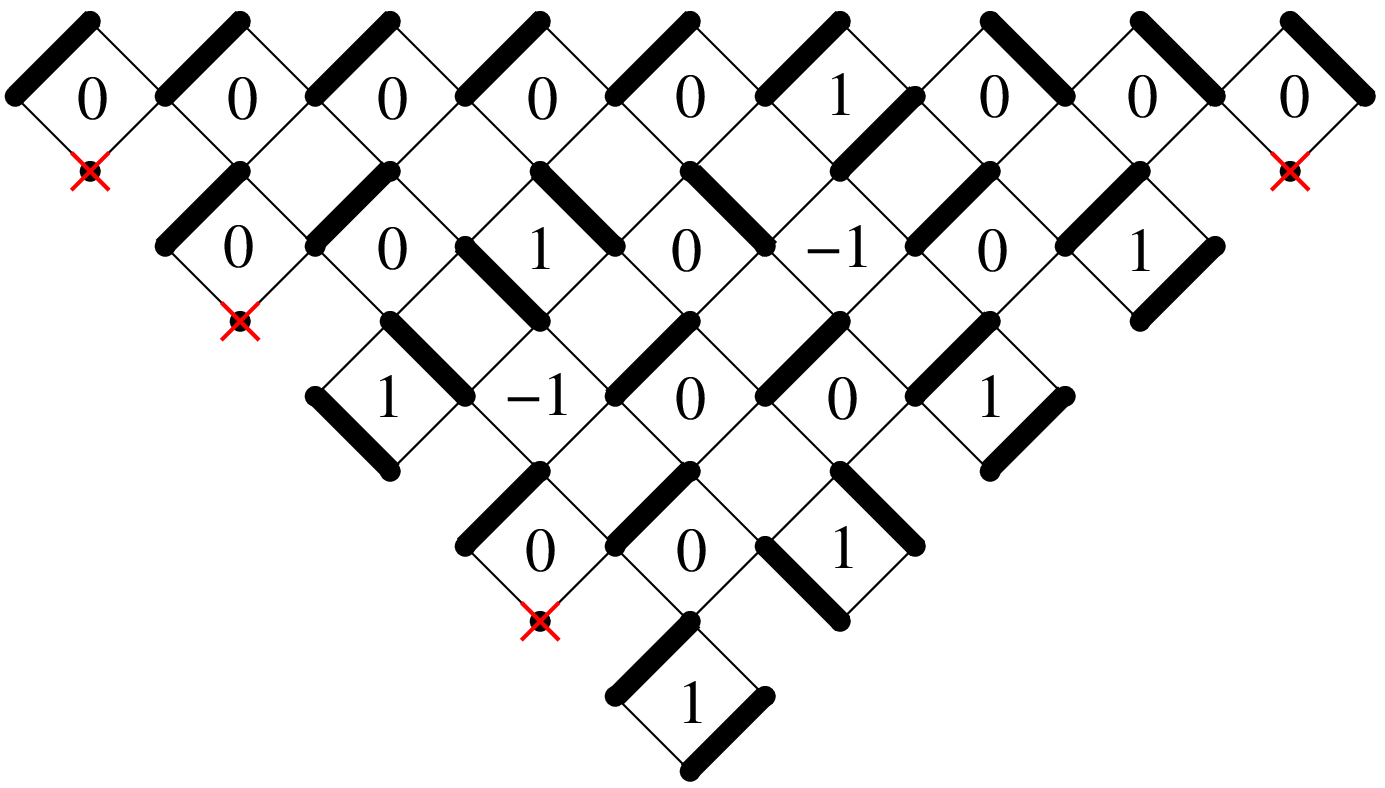}}
\caption{\label{m5} $\mathcal{Q}_5$ (left) and an example (right)}
\end{figure}

Now, on the $\ast$-side, since each of the $n$ rows of an order $n$ $\ast$ has sum $1$, and each of the $2n-1$ columns has
sum $0$ or $1$, there must be precisely $n-1$ columns that have sum $0$. Let $1 \le i_1 < i_2 < \ldots < i_{n-1} \le 2n-1$ and denote by
$\ast(n;i_1,\ldots,i_{n-1})$ the subset of $\ast(n)$, where the columns with sum $0$ are the columns $i_1,\ldots,i_{n-1}$. (Note that this set is obviously empty if $i_j=n$ for a $j$.)

In the example in Figure~\ref{m5} (right), we indicate a surjection from the set of perfect matchings of $\mathcal{Q}_n - \{i_1,\ldots,i_{n-1}\}$ onto $\ast(n;i_1,\ldots,i_{n-1})$ (see also \cite[Remark 4.3]{CiucuReflect}): Each diamond corresponds to an entry of the $\ast$, and this entry is in fact the number of matching edges on the boundary of the diamond. To compute the size of the preimage of a given $\ast$ under this map, observe that for each $1$ of the $\ast$, there are obviously two possibilities to arrange the two matching edges on the boundary of the respective diamond. In summary,
$$
\sum_{T \in \ast(n)} 2^\text{$\#$ of $1$'s in $T$} = \sum_{V \subseteq \{1,2,\ldots,n-1,n+1,\ldots,2n-1\} \atop |V|=n-1} (\text{$\#$ of perfect matchings of $\mathcal{Q}_n - V$}).
$$
As $\sum_{T \in \ast(n)} 2^\text{$\#$ of $1$'s in $T$} = 2^n \sum_{T \in \ast(n)} 2^{\mu(T)}$,
Corollary~\ref{23} now implies the following.

\begin{cor}
\label{match}
For any $n=1,2,3,\ldots$,
$$
 \sum_{V \subseteq \{1,2,\ldots,n-1,n+1,\ldots,2n-1\} \atop |V|=n-1} (\text{$\#$ of perfect matchings of $\mathcal{Q}_n - V$}) = 2^{\binom{n+1}{2}}.
$$
\end{cor}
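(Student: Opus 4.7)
The plan is to simply assemble the two ingredients already developed immediately before the statement. Specifically, the preceding paragraphs exhibit, for each subset $V = \{i_1, \ldots, i_{n-1}\} \subseteq \{1,\ldots,n-1,n+1,\ldots,2n-1\}$, a surjection from the set of perfect matchings of $\mathcal{Q}_n - V$ onto $\ast(n; i_1, \ldots, i_{n-1})$, with the property that the fiber over a given $T$ has size $2^{\#\text{ of }1\text{'s in }T}$. This is because each $1$ in $T$ corresponds to a diamond whose boundary carries two matching edges, and for each such diamond there are exactly two ways to place those edges; whereas for each $0$ and each $-1$ the configuration on the boundary of the corresponding diamond is forced.

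First I would sum this fiber identity over all admissible $V$ to obtain
\begin{equation*}
\sum_{V \subseteq \{1,\ldots,n-1,n+1,\ldots,2n-1\},\, |V|=n-1} (\#\text{ of perfect matchings of } \mathcal{Q}_n - V) \;=\; \sum_{T \in \ast(n)} 2^{\#\text{ of }1\text{'s in }T},
\end{equation*}
using the fact that $\ast(n)$ is partitioned, according to which $n-1$ columns have sum $0$, into the sets $\ast(n;i_1,\ldots,i_{n-1})$.

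Next I would convert the statistic $\#\text{ of }1\text{'s in }T$ into $\mu_{\da}$. Since each of the $n$ rows of an order $n$ $\ast$ has row sum $1$, the total sum of entries of $T$ equals $n$, and hence the number of $1$'s in $T$ equals $n + \mu(T)$, where $\mu(T)$ denotes the number of $-1$'s in $T$. Substituting yields
\begin{equation*}
\sum_{T \in \ast(n)} 2^{\#\text{ of }1\text{'s in }T} \;=\; 2^{n} \sum_{T \in \ast(n)} 2^{\mu(T)}.
\end{equation*}

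Finally, by the first part of Corollary~\ref{23}, the $2$-enumeration of order $n$ alternating sign triangles is $2^{\binom{n}{2}}$. Combining gives
\begin{equation*}
2^{n} \cdot 2^{\binom{n}{2}} \;=\; 2^{n+\binom{n}{2}} \;=\; 2^{\binom{n+1}{2}},
\end{equation*}
which is the claimed identity. There is no genuine obstacle here; the only point that deserves a sentence of care is the fiber-size count $2^{\#\text{ of }1\text{'s in }T}$, and even that is already justified in the discussion preceding the corollary, so the proof reduces to a one-line chain of equalities invoking the surjection and Corollary~\ref{23}.
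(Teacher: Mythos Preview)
Your proof is correct and follows essentially the same route as the paper: use the surjection with fiber size $2^{\#\text{ of }1\text{'s in }T}$ to rewrite the matching sum as $\sum_{T\in\ast(n)}2^{\#\text{ of }1\text{'s in }T}$, convert this to $2^{n}\sum_{T\in\ast(n)}2^{\mu(T)}$ via the row-sum identity, and then invoke Corollary~\ref{23} to obtain $2^{n}\cdot 2^{\binom{n}{2}}=2^{\binom{n+1}{2}}$.
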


The number of perfect matchings of a tilted square region of the square grid of order $n$ (dual to the Aztec diamond of order $n$) is also $2^{\binom{n+1}{2}}$, since the plain enumeration of the perfect matchings of this region is up to the factor $2^n$ equal to the $2$-enumeration of order $n$
$\asm$s as mentioned above. A bijection between the set of perfect matchings of this region and the set of perfect matchings of the family of graphs $(\mathcal{Q}_n - V)_{V \subseteq \{1,2,\ldots,n-1,n+1,\ldots,2n-1\}}$ might give a hint on the bijection between $\asm$s and $\ast$s.

\section{Maximal number of $1$'s: Quasi alternating sign triangles}
\label{dast}

The main purpose of this section is to provide a proof of Theorem~\ref{maxone} and generalizations.

\subsection{The $\qast$ partition function}

\begin{theo}
\label{dastpartfunct}
We have the following determinant formula for the partition function $Z^{\uparrow}_n(u_1,\ldots,u_n;\allowbreak u_{n+1})_{\qast}$.
\begin{multline}
\label{dastpartfunctexpr}
\frac{\prod_{j=1}^{n}\sigma( \bar p u_j) \prod_{i=1}^{n} \prod_{j=1}^{n+1} \sigma(q^2 u_i u_j) \sigma(q^2 \bar{u_i} \bar u_j)}{\sigma(q^2)^{2n} \sigma(q^4)^{n^2} \prod_{i=1}^{n} \sigma(u_i \bar u_{n+1}) \prod_{1 \le i < j \le n} \sigma(u_i \bar u_j)^2} \\
\times
\det_{1\le i,j\le n+1}\left( \Large \begin{cases} \frac{1}{\sigma(q^2u_iu_j)}+
\frac{1}{\sigma(q^2 \bar u_i \bar u_j)},& i \le n \\
\frac{\sigma(\bar p u_{n+1})}{\sigma( \bar p u_j)}, & i=n+1  \end{cases} \right)
\end{multline}
\end{theo}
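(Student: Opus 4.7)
The strategy is to apply the analog of Theorem~\ref{un} for the refined partition function $Z_n^\uparrow$---which uses \eqref{up} in place of \eqref{full}---and proceed by induction on $n$, closely paralleling the proof of Theorem~\ref{astpartfunct}. Denote the right-hand side of \eqref{dastpartfunctexpr} by $\widehat{Z}_n^\uparrow(u_1,\ldots,u_n;u_{n+1})$; the base case $n=0$ is an easy check.

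The first task is to show that $\widehat{Z}_n^\uparrow$ is a Laurent polynomial in $u_{n+1}$ despite the factor $\prod_{i=1}^n \sigma(u_i \bar u_{n+1})$ in the denominator of the prefactor. At $u_{n+1} = u_j$ with $j \le n$, columns $j$ and $n+1$ of the matrix coincide (both upper rows and the bottom-row entries $\sigma(\bar p u_{n+1})/\sigma(\bar p u_j)$ match); at $u_{n+1} = -u_j$, they differ by the overall sign $-1$ in every row, using oddness of $\sigma$. Hence the determinant vanishes in both cases. Next, I verify the degree and order bounds in $u_{n+1}$ via the Leibniz expansion exactly as in the proof of Theorem~\ref{astpartfunct}: only column $n+1$ depends on $u_{n+1}$, and the counting of $\sigma$-factors in each term is identical. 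Symmetry in $u_1,\ldots,u_n$ follows because swapping $u_i$ and $u_j$ (with $i,j\le n$) simultaneously permutes rows $i,j$ and columns $i,j$ of the matrix, leaving the determinant invariant, while the prefactor is manifestly symmetric. Evenness in each $u_i$ ($i\le n$) follows because negating $u_i$ multiplies row $i$ and column $i$ of the matrix each by $-1$ (the diagonal entry being invariant as it is even in $u_i$), so the determinant is unchanged; a short sign count shows the prefactor is also preserved. Invariance under $(u_1,\ldots,u_{n+1})\mapsto(\bar u_1,\ldots,\bar u_{n+1})$ together with $p\mapsto\bar p$---which encodes the interchange of left and right boundary constants in the $\qast$ specialization, cf.\ Subsection~\ref{dast-spec}---is immediate from the structure of both the matrix entries and the prefactor.

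It remains to verify the specialization at $u_{n+1} = q^2 \bar u_1$ demanded by \eqref{up}. A key simplification is that $\wrin{u_1} = 0$ in the $\qast$ specialization, so \eqref{up} reduces to
$$
Z_n^\uparrow(u_1,\ldots,u_n;q^2\bar u_1) \;=\; \wrone{u_1}\, Z_{n-1}^\uparrow(u_2,\ldots,u_n;u_1)\, \Omega_n,
$$
with $\wrone{u_1}= \sigma(\bar p q^2 \bar u_1)/\sigma(q^2)$. Following the strategy of the proof of Theorem~\ref{astpartfunct}, I multiply the $(n+1)$-st column of the matrix in $\widehat{Z}_n^\uparrow$ by $\sigma(q^2 u_1 u_{n+1})\,\sigma(q^2 \bar u_1 \bar u_{n+1})$ (and divide the prefactor correspondingly). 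At $u_{n+1}=q^2\bar u_1$ this column collapses: the $(1,n+1)$-entry limits to $\sigma(q^4)$ (from the factor $1/\sigma(q^2 \bar u_1 \bar u_{n+1})$ whose singularity is cancelled), the $(i,n+1)$-entries for $i\in\{2,\ldots,n\}$ vanish (both partial fractions have a vanishing numerator against a non-vanishing denominator), and the $(n+1,n+1)$-entry vanishes as well. Expanding along this column reduces the problem to a single minor; cyclically shifting its column $1$ to position $n$ (contributing $(-1)^{n-1}$) and factoring $\sigma(\bar p q^2 \bar u_1)/\sigma(\bar p u_1)$ out of its bottom row produces exactly the determinant appearing in $\widehat{Z}_{n-1}^\uparrow(u_2,\ldots,u_n;u_1)$. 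The main obstacle is the bookkeeping of $\sigma$-factors: one has to check that the surviving factors of the specialized prefactor combine with the prefactor of $\widehat{Z}_{n-1}^\uparrow(u_2,\ldots,u_n;u_1)$ to reproduce $\wrone{u_1}\,\wlout{u_1}\,\prod_{i=2}^n \wne{u_1 u_i}\,\wne{q^2 \bar u_1 u_i}$, matching the right-hand side of the simplified \eqref{up}.
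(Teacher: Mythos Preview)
Your proposal is correct and follows essentially the same approach as the paper's own proof: induction via the characterization in Theorem~\ref{un} (with \eqref{up}), the same Laurent-polynomial and degree checks, and the same column-multiplication trick reducing the specialized determinant at $u_{n+1}=q^2\bar u_1$ to the order-$(n-1)$ determinant. Your additional remarks about the column shift and the factor $\sigma(\bar p q^2\bar u_1)/\sigma(\bar p u_1)$ pulled from the bottom row simply spell out details the paper leaves implicit.
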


\begin{proof} We use Theorem~\ref{un}.
The case $n=1$ is easy to verify.
To show that \eqref{dastpartfunctexpr} is indeed a Laurent polynomial in $u_{n+1}$, observe that
$$
\frac{\prod_{j=1}^{n}\sigma( \bar p u_j) \prod_{i=1}^{n} \prod_{j=1}^{n+1} \sigma(q^2 u_i u_j) \sigma(q^2 \bar{u_i} \bar u_j)}{\sigma(q^2)^{2n} \sigma(q^4)^{n^2} \prod_{1 \le i < j \le n} \sigma(u_i \bar u_j)^2}
\det_{1\le i,j\le n+1}\left( \Large \begin{cases} \frac{1}{\sigma(q^2u_iu_j)}+
\frac{1}{\sigma(q^2 \bar u_i \bar u_j)},& i \le n \\
\frac{\sigma(\bar p u_{n+1})}{\sigma( \bar p u_j)}, & i=n+1  \end{cases} \right)
$$
is a Laurent polynomial in $u_{n+1}$.  It is divisible by $\prod_{i=1}^{n} \sigma(u_i \bar u_{n+1})$, as the determinant vanishes if we set $u_{n+1}= \pm u_j$, for $j=1,\ldots,n$, since then the $j$-th column coincides with the $(n+1)$-st column of the matrix up to sign.

We check the properties from Theorem~\ref{un}.

\emph{\eqref{1degree}:} To deduce the bounds on the degree and order as a Laurent polynomial in
$u_{n+1}$, use the Leibniz formula for the determinant and observe that, after multiplying each summand with the prefactor and expanding, we obtain a sum of rational functions where each numerator has degree $2n$ and order $-2n$, and each denominator has degree $n$ and order $-n$.

\emph{\eqref{1sym}, \eqref{1inv} and \eqref{1even}:} The expression is readily checked to be symmetric in $u_1,\ldots,u_{n}$,  invariant under the transformation $(u_1,\ldots,u_{n+1},p) \mapsto (\bar u_1, \ldots, \bar u_{n+1}, \bar p)$ and even in $u_i$, $i=1,2,\ldots,n$. Note that interchanging respective left and right boundary constants in the $\qast$-specialization (see Subsection~\ref{dast-spec}) is
equivalent to the replacement $p \mapsto \bar p$.

\emph{\eqref{1eval}:} As $\wrin{u_1}=0$, $Z_n^{\downarrow}(u_1,\ldots,u_n;u_{n+1})$ disappears on the right-hand side of \eqref{up}.
Let $X_n(u_1,\ldots,u_n;u_{n+1})$ denote \eqref{dastpartfunctexpr}. Then we need to show the following.
$$
X_n(u_1,\ldots,u_n;  q^2 \bar u_1) =  \frac{\sigma( \bar p q^2 \bar u_1) \sigma(q^2  u_1^2)}
{\sigma(q^2)^2} \prod_{i=2}^{n} \frac{\sigma(q^2 u_1 u_i) \sigma(q^4 \bar u_1  u_i)}{\sigma(q^4)^2} X_{n-1}(u_2,\ldots,u_n; u_1)
$$
For this purpose, multiply the $(n+1)$-st column of the matrix of $X_n(u_1,\ldots,u_n;  q^2 \bar u_1)$ with
the prefactor $\sigma(q^2 u_1 u_{n+1}) \sigma(q^2 \bar u_1 \bar u_{n+1})$. After taking $u_{n+1} =  q^2 \bar u_1$, the $(n+1)$-st column is
$(\sigma(q^4),0,\ldots,0)^t$, and now expand the determinant with respect to this column.
\end{proof}

\subsection{Specialization of the  $\qast$ partition function at $u_{n+1}=p$}

Taking $u_{n+1} \to p$ in \eqref{dastpartfunctexpr}, the last row of the matrix becomes
$(0,0,\ldots,0,1)$, and we obtain the following simplified formula.

\begin{cor}
\label{dastpartfunct1}
We have the following formula for $Z^{\uparrow}_n(u_1,\ldots,u_n;p)_{\qast}$.
\begin{equation}
\label{dastpartfunct1expr}
\frac{\prod_{i=1}^n\sigma(q^2 p u_i)\sigma(q^2 \bar p \bar u_i)\,
\prod_{i,j=1}^n\sigma(q^2u_iu_j)\sigma(q^2 \bar u_i \bar u_j)}
{\sigma(q^2)^{2n}\,\sigma(q^4)^{n^2}\,\prod_{1\le i<j\le n}\sigma(u_i \bar u_j)^2}
\det_{1\le i,j\le n}\biggl(  \frac{1}{\sigma(q^2u_iu_j)}+
\frac{1}{\sigma(q^2 \bar u_i \bar u_j)} \biggr)
\end{equation}
\end{cor}

Here we again sketch an alternative proof of Corollary~\ref{dastpartfunct1} which avoids Theorem~\ref{dastpartfunct} and Lemma~\ref{eval}.

\begin{proof}[Sketch of an alternative proof using Theorem~\ref{u1}]
We need to verify $Z^{\uparrow}_n(\bar p q^2,u_2,\ldots,u_n;p)_{\qast}=0$. By the symmetry of Theorem~\ref{sym}, it suffices to show that
$Z^{\uparrow}_n(u_1,\ldots,u_{n-1},\bar p q^2;p)_{\qast}=0$. Assume $u_n=\bar p q^2$ and $u_{n+1}=p$, and consider the bottom bulk vertex $v$ in the central column. Its label is $u_{n} u_{n+1} = q^2$, and since $\wnw{q^2} =0$, there are only two possible local configurations around $v$ that contribute a non-zero weight (namely $\mone$ and $\ne$). However, this implies that there are only two
possible configurations for the right boundary vertex $v_R$ adjacent to $v$ (the vertical edge connecting $v$ and $v_R$ is oriented from $v$ to $v_R$). The assertion follows as $\wrin{u_n} = \wrone{u_n}=0$.

The cases $n=1,2$ are easy to verify. To show that \eqref{dastpartfunct1expr} is a Laurent polynomial in $u_1,\ldots,u_{n}$, proceed as in the footnote of the proof of  Theorem~\ref{astpartfunct}.

\emph{\eqref{2degree}:} We use the Leibniz rule for the determinant. The expression is a sum of rational functions with degrees and orders of the numerators in $u_1$ being $4n$ and $-4n$, respectively, while the degrees and orders of the denominators are $2n-2$ and $-2n+2$, respectively.

\emph{\eqref{2sym},\eqref{2inv},\eqref{2even}, \eqref{2eval} and \eqref{2extra}:} The expression is obviously symmetric in $u_1,\ldots,u_n$, invariant under the transformation
$(u_1,\ldots,u_n,p) \mapsto (\bar u_1,\ldots,\bar u_n, \bar p)$ and even in $u_i$, $i=1,2,\ldots,n$.
The identities from Lemmas~\ref{eval1} and \ref{eval2} can be verified straightforwardly.
It is also obvious that the expression in \eqref{dastpartfunct1expr} has a zero at
$u_1 = \bar p q^2$.
\end{proof}

\subsection{Specialization of the $\qast$ partition function at $u_{n+1}=p$ and $q=e^{\frac{i \pi}{6}}$}

\begin{theo}
\label{dastschurtheo}
We have the following Schur function expression for $Z^{\uparrow}_n(q,u_1,\ldots,u_n;p)_{\qast}$ at $q=e^{\frac{i \pi}{6}}$.
\begin{equation}
\label{dastschur}
3^{-\binom{n+1}{2}} \prod_{i=1}^{n} (p ^2 u_i^2 + 1 + \bar p^2 \bar u_i^2)   \schur_{(n,n-1,n-1,n-2,n-2,\ldots,1,1)}(u_1^2,\bar u_1^2,\ldots,u_n^2, \bar u_n^2)
\end{equation}
\end{theo}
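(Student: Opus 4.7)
My plan is to run the proof in parallel with that of Theorem~\ref{astschurtheo}: start from the closed-form determinant in Corollary~\ref{dastpartfunct1}, specialize $q=e^{i\pi/6}$, and identify the resulting expression as the right-hand side of \eqref{dastschur} by applying Okada's character identity~\cite[Theorem~2.4~(1), 1st eq.]{OkadaCharacters}, which is the companion of the one invoked for the $\ast$ case.

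The first step is bookkeeping of the scalar factors at $q=e^{i\pi/6}$. One has $q^2+\bar q^2=1$ and $q^4+\bar q^4=-1$, so $\sigma(q^2)^2=\sigma(q^4)^2=-3$. A direct expansion gives $\sigma(q^2 x)\sigma(q^2\bar x)=q^4+\bar q^4-x^2-\bar x^2=-(x^2+1+\bar x^2)$. Applied with $x=pu_i$, this yields $\sigma(q^2 pu_i)\sigma(q^2\bar p\bar u_i)=-(p^2u_i^2+1+\bar p^2\bar u_i^2)$, which is precisely the product factor appearing in \eqref{dastschur}.  The same identity applied with $x=u_iu_j$ shows that the double product $\prod_{i,j=1}^n\sigma(q^2 u_iu_j)\sigma(q^2\bar u_i\bar u_j)$ factors (up to signs) into $\prod_{i,j}(u_i^2u_j^2+1+\bar u_i^2\bar u_j^2)$.

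The second step reshapes the determinant. Using the elementary identity
$$\sigma(q^2 u_iu_j)+\sigma(q^2\bar u_i\bar u_j)=\sigma(q^2)(u_iu_j+\bar u_i\bar u_j),$$
each entry becomes $\sigma(q^2)(u_iu_j+\bar u_i\bar u_j)/[\sigma(q^2 u_iu_j)\sigma(q^2\bar u_i\bar u_j)]$. Extracting $\sigma(q^2)^n$ from the determinant and cancelling its denominators against the product in the prefactor leaves a polynomial determinant whose entries differ from those of the $\ast$ case (Corollary~\ref{astpartfunct1}) only by the extra symmetric factor $u_iu_j+\bar u_i\bar u_j$ in each row. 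The scalar bookkeeping $\sigma(q^2)^n/[\sigma(q^2)^{2n}\sigma(q^4)^{n^2}]$ then produces, after careful sign accounting using $\sigma(q^2)^2=\sigma(q^4)^2=-3$, the constant $3^{-\binom{n+1}{2}}$.

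The third step is the identification with Okada's Schur formula. The determinant now has exactly the Cauchy-type shape that Okada evaluates, and comparison with~\cite[Theorem~2.4~(1)]{OkadaCharacters} will show the result equals (up to the already-accounted-for scalar) $\schur_{(n,n-1,n-1,\ldots,1,1)}(u_1^2,\bar u_1^2,\ldots,u_n^2,\bar u_n^2)$. The passage from the $\ast$-partition $(n-1,n-1,\ldots,1,1)$ to the $\qast$-partition $(n,n-1,n-1,\ldots,1,1)$ is consistent with the extra factor $u_iu_j+\bar u_i\bar u_j$ produced in Step~2, which adds one unit to the leading exponent of the Weyl-type numerator. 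The main obstacle is precisely this final identification: one must verify that the symmetric-sum variant of the determinant entry produced here indeed matches Okada's formula for the partition with the extra leading part $n$; if Okada's paper does not state this formula directly, I would either prove it by a short row/column reduction argument reducing to the $\ast$-version (already used to prove Theorem~\ref{astschurtheo}), or derive it from the Jacobi--Trudi expansion and the explicit Vandermonde denominator.
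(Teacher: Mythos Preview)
Your overall strategy---start from Corollary~\ref{dastpartfunct1} and feed the specialized determinant into one of Okada's character identities---is exactly what the paper does. The discrepancy is in \emph{which} Okada identity. You point to \cite[Theorem~2.4~(1)]{OkadaCharacters}, the Izergin--Korepin case, and then worry that the extra factor $u_iu_j+\bar u_i\bar u_j$ in each entry is an obstruction that may require an ad-hoc row/column reduction or a Jacobi--Trudi argument. But that extra factor cannot be extracted row-by-row (it depends on both $i$ and $j$), so a reduction to the $\ast$-determinant along the lines you sketch is not straightforward, and your Step~3 is left genuinely incomplete.

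The paper avoids this by observing that the determinant
\[
\det_{1\le i,j\le n}\left(\frac{1}{\sigma(q^2u_iu_j)}+\frac{1}{\sigma(q^2\bar u_i\bar u_j)}\right)
\]
is, up to the prefactor, precisely one of the two determinantal factors in Kuperberg's partition function for half-turn symmetric ASMs \cite[Theorem~10]{KuperbergRoof}. Okada evaluates exactly this object at $q=e^{i\pi/6}$ in \cite[Theorem~2.4~(2), 2nd eq.]{OkadaCharacters} (note: part~(2), not~(1)), obtaining the Schur function $\schur_{(n,n-1,n-1,\ldots,1,1)}$ directly. So the identity you were prepared to prove from scratch is already in the literature; you just need to look one item further down in Okada's theorem. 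Your scalar bookkeeping in Steps~1--2 is fine and is what is needed to match the prefactors once the correct citation is in place.
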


\begin{proof}
Use \eqref{dastpartfunct1expr} and observe that this expression is a specialization of one of two factors of the partition function of half-turn symmetric $\asm$s as noted in \cite[Theorem~10]{KuperbergRoof}. The result follows from \cite[Theorem~2.4 (2), 2nd eq.]{OkadaCharacters} or from \cite[Eq. (11), Eq. (31)]{StroReloaded,RazStroHTSymm}.
\end{proof}

Formulas for the specializations of
$Z_n(u_1,\ldots,u_n;p)_{\qast}$ at $q=e^{\frac{i \pi}{4}},e^{\frac{i \pi}{8}}$ are also provided in \cite[Theorem 2.4 (2)]{OkadaCharacters}. They correspond to $x$-enumerations of $\qast$s for $x=0,2$, respectively.

\begin{rem}
In \cite{AyyBehFis16b} it was shown that \eqref{dastschur} factorizes as
\begin{equation}
\label{DASTSchurfact}
\begin{aligned}
\schur_{(2n,2n-1,2n-1,2n-2,2n-2,\ldots,1,1)}(u_1,\bar u_1,u_2,\bar u_2,\ldots,u_{2n}, \bar u_{2n})
&= \\ (-1)^n \so_{(n,n-1,n-1,\ldots,1,1)}(u_1,&\ldots,u_{2n}) \so_{(n,n-1,n-1,\ldots,1,1)}(-u_1,\ldots,-u_{2n}), \\
\schur_{(2n+1,2n,2n,\ldots,1,1)}(u_1,\bar u_1,u_2,\bar u_2,\ldots,u_{2n+1},
\bar u_{2n+1})
&= \\ \sp_{(n,n-1,n-1,\ldots,1,1)}(u_1,&\ldots,u_{2n+1}) \oeven_{(n+1,n,n,\ldots,1,1)}(u_1,\ldots,u_{2n+1}).
\end{aligned}
\end{equation}
\end{rem}

\subsection{Proof of Theorem~\ref{maxone}}
We set $u_i=1$, $i=1,\ldots,n$, and $p=1$ in \eqref{dastschur}  to finally prove Theorem~\ref{maxone}. We use the following formula for the specialization of Schur functions.
\begin{equation}
\label{hookcontent}
\schur_{\lambda}(1,1,\ldots,1) = \prod_{1 \le i < j \le n} \frac{\lambda_i - \lambda_j +j-i}{j-i}
\end{equation}
This is, by the combinatorial interpretation of the Schur function as the generating function of semistandard tableaux of shape $\lambda$, also the number of these tableaux. It follows that
$$
3^{-\binom{n}{2}}\schur_{(n,n-1,n-1,n-2,n-2,\ldots,1,1)}(1^{2n})=\prod_{i=0}^{n-1} \frac{(3i+2) (3i)!}{(n+i)!}
$$
and the latter is also the number of cyclically symmetric plane partitions in an $n \times n \times n$ box, see \cite{AndrewsMacdonald}.

\subsection{Outlook}
A natural question to ask is whether there is a generalization of
Theorem~\ref{maxone} that is analogous to Theorem~\ref{maxmone}, i.e., what is a statistic on $n \times n \times n$ $\cspp$s that has the same distribution as the numbers of $-1$'s in the fundamental domain of $(2n+1) \times (2n+1)$ $\dasasm$s $A$ with $\n_{1}(A)=n+1$? In a forthcoming paper, we will show that this is the case for a statistic that was already introduced by Mills, Robbins and Rumsey in \cite[p. 47]{MRR87}, namely the number of special parts in a $\cspp$. In fact, this result will follow from a common generalization of Theorems~\ref{maxmone} and \ref{maxone} that involves an infinite family of alternating sign matrix objects, among which the objects from Theorems~\ref{maxmone} and \ref{maxone} are two special members. In addition, certain classes of extreme diagonally and antidiagonally symmetric alternating sign matrices of \emph{even order} will also be members of this family.

With regard to this family, it will also make sense to consider objects with vertical symmetry, and we will enumerate these symmetry classes. A special case of this result will be that vertically symmetric $\ast$s of order $n$ are equinumerous with vertically symmetric $\asm$s of order $n$. More generally, we will see that the generating functions of vertically symmetric $\asm$s and vertically symmetric $\ast$s with respect to the numbers of $-1$'s coincide.

\section{Maximal number of $0$'s: Off-diagonally and off-antidiagonally symmetric alternating sign matrices}
\label{qoosasm}

In this section, we provide the proof of Theorem~\ref{maxone} and generalizations.

\subsection{The $\qoosasm$ partition function}

Recall the definition of the \emph{Pfaffian} of a triangular array $A=(a_{i,j})_{1 \le i < j \le 2n}$
$$
\underset{1 \le i < j \le 2n}{\pf}(a_{i,j}) = \sum_{\pi = \{\{i_1,j_1\},\{i_2,j_2\},\ldots,\{i_n,j_{n}\}\} \atop \pi \text{ perfect matching of } K_{2n},  i_k < j_k} \sgn \pi \prod_{k=1}^{n} a_{i_k,j_k},
$$
where $\sgn \pi$ is the sign of the permutation $i_1 j_1 i_2 j_2 \ldots i_n j_n$.
Suppose $\widehat{A}=(a_{i,j})_{1 \le i, j \le 2n}$ is the skew symmetric extension of $A$. Then it is a well-known fact that
$$
\left(\underset{1 \le i < j \le 2n}{\pf}(a_{i,j}) \right)^2 = \det_{1 \le i, i \le 2n} (a_{i,j}).
$$

\begin{theo}
\label{qoosasmpartfunct}
We have the following Pfaffian formula for the $\qoosasm$ partition function
$$
Z_{n}(u_1,\ldots,u_n;u_{n+1})_{\qoosasm} =
P_{\lceil \frac{n}{2} \rceil}(u_1,\ldots,u_{2 \lceil \frac{n}{2} \rceil}) \, Q_{\lceil \frac{n+1}{2} \rceil}(u_1,\ldots,u_{2 \lceil \frac{n+1}{2} \rceil -1}),
$$
where
$$
P_m(u_1,\ldots,u_{2m}) = \sigma(q^4)^{-(m-1)2m} \prod_{1\le i<j\le2m}\frac{\sigma(q^2u_iu_j)\,\sigma(q^2 \bar u_i \bar u_j)}{\sigma(u_i \bar u_j)}
\underset{1\le i<j\le2m}{\pf}\left(\frac{\sigma(u_i \bar u_j)}{\sigma(q^2u_iu_j)\,\sigma(q^2 \bar u_i \bar u_j)}\right)
$$
and
\begin{multline*}
Q_m(u_1,\ldots,u_{2m-1}) = \sigma(q^4)^{-(m-1)(2m-1)} \prod_{1\le i<j\le2m-1}\frac{\sigma(q^2u_iu_j)\,\sigma(q^2
\bar u_i \bar u_j)}{\sigma(u_i \bar u_j)} \\   \times
\underset{1\le i<j\le2m}{\pf}
\left( \Large
\begin{cases} \frac{\sigma(u_i \bar u_j)}{\sigma(q^2u_iu_j)} + \frac{ \sigma(u_i \bar u_j)}{\sigma(q^2 \bar u_i \bar u_j)}, & j<2m \\
1, & j=2m \end{cases} \right).
\end{multline*}
\end{theo}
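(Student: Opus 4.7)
The plan is to use the characterization Theorem~\ref{un} in conjunction with induction on $n$, following the same blueprint that was used for Theorem~\ref{astpartfunct} and Theorem~\ref{dastpartfunct}. Let $\widehat Z_n(u_1,\ldots,u_n;u_{n+1})$ denote the right-hand side of the asserted formula, and verify that it satisfies the five conditions \eqref{1degree}--\eqref{1eval} of Theorem~\ref{un}; uniqueness then forces $\widehat Z_n = Z_n(\cdot)_{\qoosasm}$. The base cases $n=0,1$ reduce to direct inspection of the one or two admissible orientations.

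For the symmetry in $u_1,\ldots,u_n$, invariance under $u_i\mapsto\bar u_i$, and evenness in each $u_i$, I would argue exactly as in the proofs of Theorems~\ref{astpartfunct} and \ref{dastpartfunct}. Symmetry of $P_m$ and $Q_m$ follows from the fact that a simultaneous permutation of rows and columns of a skew-symmetric matrix only multiplies its Pfaffian by the sign of the permutation, and this sign is cancelled by the corresponding sign in $\prod_{i<j}\sigma(u_i\bar u_j)$; the prefactors take care of the rest. Invariance under $u_i\mapsto \bar u_i$ is immediate because each entry of the Pfaffian and each factor of the prefactor is invariant (note that the $\qoosasm$ specialization is symmetric between left and right boundary constants, so no extra swap is needed). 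Evenness in each $u_i$ follows from the fact that $\sigma(u_i\bar u_j)$ appears in both the denominator prefactor and the Pfaffian entries, while the remaining building blocks $\sigma(q^2u_iu_j)$ and $\sigma(q^2\bar u_i\bar u_j)$ enter in a manner that is even in $u_i$ after the $u_i\bar u_j$ factors are cancelled. The degree/order bounds in $u_{n+1}$ can be read off by expanding the Pfaffian along the row/column containing $u_{n+1}$ and observing the same numerator/denominator degree count as in the proof of Theorem~\ref{astpartfunct}; the parity splitting between $P$ and $Q$ has to be tracked here.

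The heart of the argument is the evaluation \eqref{1eval} at $u_{n+1}=q^2\bar u_1$. Here one exploits a crucial simplification specific to the $\qoosasm$ case: since $\wrone{u}=\wrmone{u}=0$ and $\wrin{u}=\wrout{u}=\sigma(q^2\bar u^2)/\sigma(q^2)$ (before the normalization is divided out), \eqref{full} collapses to a single term involving only $Z_{n-1}(u_2,\ldots,u_n;u_1)_{\qoosasm}$. After multiplying out the column of the $P$- or $Q$-Pfaffian containing $u_{n+1}$ by the appropriate prefactor and specializing, the entries in that distinguished column/row telescope: those of the form $\sigma(u_j\bar u_{n+1})/(\sigma(q^2u_ju_{n+1})\sigma(q^2\bar u_j\bar u_{n+1}))$ degenerate in such a way that a single entry survives and the Pfaffian contracts (via Pfaffian expansion along a row) onto the smaller Pfaffian with variables $u_2,\ldots$. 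The parity convention means that in this contraction either $P$ or $Q$ drops one pair of indices, producing the correct order-$(n-1)$ Pfaffian for the induction hypothesis.

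The main obstacle I anticipate is precisely this Pfaffian reduction at $u_{n+1}=q^2\bar u_1$: the factored form $P_{\lceil n/2\rceil}\,Q_{\lceil(n+1)/2\rceil}$ behaves differently according to the parity of $n$, and one must check that in both parities the contraction produces the product $P_{\lceil(n-1)/2\rceil}\,Q_{\lceil n/2\rceil}$ with the correct prefactor $\wrin{u_1}\,\Omega_n$ (absorbing the cancellation of $\sigma(q^2u_1^2)\sigma(q^2\bar u_1^2)$ due to the normalization). To organize this it is convenient to first verify, at the level of Pfaffian identities, two auxiliary reductions: one showing that $P_m(u_1,\ldots,u_{2m})$ reduces to $P_{m-1}(u_3,\ldots,u_{2m})$ times an explicit factor when $u_2=q^2\bar u_1$, and analogously for $Q_m$. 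These two identities can be established directly from the Pfaffian Schouten-type relations, and then combining them yields the required recursion in both parity regimes.
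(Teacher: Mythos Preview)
Your proposal is correct and follows essentially the same approach as the paper's proof: both use Theorem~\ref{un} with induction on $n$, both note that in the $\qoosasm$ specialization \eqref{full} collapses to a single term (since $\wrone{u_1}=\wrmone{u_1}=0$ and $\wrin{u_1}=\wrout{u_1}$), and both reduce the evaluation step to the two separate Pfaffian contractions for $P_m$ and $Q_m$. The only cosmetic difference is that the paper specializes the \emph{last} variable of $P_m$ (resp.\ $Q_m$) to $q^2\bar u_1$ rather than $u_2$, and simply observes that the collapse ``follows easily from the definition of the Pfaffian'' (the $(1,2m)$ entry acquires a pole matched by the zero of the prefactor, so the Laplace-type expansion along that row survives only in one term); invoking Schouten-type relations is more machinery than needed here.
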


Curiously, $P_n(u_1,\ldots,u_{2n})$ is the partition function of $2n \times 2n$
$\operatorname{OSASM}$s using the bulk weights from Table~\ref{weights} and normalizing the boundary weights as in the
$\qoosasm$ case (i.e., all boundary weights are $1$).
To see this, set $a=q^2$ in Kuperberg's formula \cite[Theorem 10]{KuperbergRoof} and divide by $\sigma(q^4)^{\binom{2n}{2}}$ to account for the different normalization of the bulk weights.

\begin{proof}
We use Theorem~\ref{un}. The case $n=1$ is easy to verify.

Both $P_m(u_1,\ldots,u_{2m})$ and $Q_m(u_1,\ldots,u_{2m-1})$ are Laurent polynomials as the two Pfaffians involved are antisymmetric functions in
the variables $u_1,\ldots,u_{2m}$ and $u_1,\ldots,u_{2m-1}$, respectively, and odd in each $u_i$.

\emph{\eqref{1degree}:}
By the definition of the Pfaffian, $P_m(u_1,\ldots,u_{2m})$ is a sum of rational functions where the degrees and orders of the numerators in $u_1$ are $4m-4$ and $-4m+4$, respectively, and of the denominators
$2m-2$ and $-2m+2$, respectively. Hence the bounds for the degree and order of $P_m(u_1,\ldots,u_{2m})$ are $2m-2$ and $-2m+2$, respectively.  This shows that, in case of $n$ is odd, the degree and order in $u_{n+1}$ of the proposed expression for the partition function are at most
$n-1$ and at least $-n+1$, respectively ($m=\lceil \frac{n}{2} \rceil$).

On the other hand, the degrees (resp.\ orders) of the numerators of the summands of $Q_m(u_1,\ldots,\allowbreak u_{2m-1})$ in $u_1$ are either
$4m-4$ (resp.\ $-4m+4$) or $4m-5$ (resp.\ $-4m+5$), depending on whether or not $1$ is matched to $2m$ and the degrees (resp.\ orders) of the denominators are $2m-2$ (resp. $-2m+2$) and $2m-3$ (resp.\ $-2m+3$). Thus, the degree and
order of $Q_m(u_1,\ldots,u_{2m-1})$ are at most $2m-2$ and at least $-2m+2$, respectively. This shows that, in case of $n$ is even, the degree and order in $u_{n+1}$ of the proposed expression for the partition function are at most $n$ and at least $-n$, respectively ($m=\lceil \frac{n+1}{2} \rceil$).

\emph{\eqref{1sym}, \eqref{1inv} and \eqref{1even}:} The expression in the statement is symmetric in $u_1,\ldots,u_n$ as it is the product of two symmetric functions in $u_1,\ldots,u_n$. It is also invariant under the replacement $(u_1,\ldots,u_n) \mapsto (\bar u_1,\ldots, \bar u_{n+1})$ as $P_m(u_1,\ldots,u_{2m})$ is invariant under  $(u_1,\ldots,u_{2m}) \mapsto (\bar u_1,\ldots, \bar u_{2m})$ and $Q_m(u_1,\ldots,\allowbreak u_{2m-1})$
is invariant under $(u_1,\ldots,u_{2m-1}) \mapsto (\bar u_1,\ldots, \bar u_{2m-1})$, and even in $u_i$, $i=1,2,\ldots,n$.

\emph{\eqref{1eval}:} Let $X_n(u_1,\ldots,u_n;u_{n+1})$ denote the expression in the statement. Then---by Lemma~\ref{eval} and by taking into account the normalization we have chosen---we need to show
$$
X_n(u_1,\ldots,u_{n}; q^2 \bar u_1) = \prod_{i=2}^{n} \frac{\sigma(q^2 u_1 u_i) \sigma(q^4 \bar u_1 u_i)}{\sigma(q^4)^2}
X_{n-1}(u_2,\ldots,u_n; u_1).
$$
Thus it suffices to show
\begin{align*}
P_n(u_1,\ldots,u_{2n-1},  q^2 \bar u_1) & =
\prod_{i=2}^{2n-1} \frac{\sigma(q^2  u_1 u_i) \sigma(q^4 \bar u_1 u_i)}{\sigma(q^4)^2} P_{n-1}(u_2,\ldots,u_{2n-1}), \\
Q_n(u_1,\ldots,u_{2n-2},  q^2 \bar u_1) & =
\prod_{i=2}^{2n-2} \frac{\sigma(q^2 u_1 u_i) \sigma(q^4 \bar u_1 u_i)}{\sigma(q^4)^2} Q_{n-1}(u_2,\ldots,u_{2n-2})
\end{align*}
and this follows easily from the definition of the Pfaffian.
\end{proof}

\subsection{Specialization of the $\qoosasm$ partition function at $q=e^{\frac{i \pi}{6}}$}

Here we obtain a formula involving \emph{symplectic characters} $\sp_{\lambda}(x_1,\ldots,x_n)$ when specializing at $q=e^{\frac{i \pi}{6}}$. (See \cite[§ 24.2]{fultonharris} for a reference on symplectic characters.)
We use the determinantal formula
\begin{equation}
\label{symp_det}
\sp_{\lambda}(x_1,\ldots,x_n) = \frac{\det\limits_{1 \le i, j \le n} \left( x_i^{\lambda_j+n-j+1} - \bar x_i^{\lambda_j+n-j+1} \right)}{\prod_{i=1}^{n} (x_i - \bar x_i) \prod_{1 \le i < j \le n} (x_i + \bar x_i - x_j - \bar x_j)}
\end{equation}
if $\lambda=(\lambda_1,\lambda_2,\ldots,\lambda_n)$, and extend the partition with zero parts if its length is less than $n$.

Symplectic characters are also generating functions of certain tableaux:
A \emph{symplectic tableaux} of shape $\lambda$ is a semistandard tableaux of shape $\lambda$ with entries $1 < 1' < 2 < 2' < 3 < \ldots$ and the additional constraint that entries in row $i$ are no smaller than $i$,
see \cite[Theorem~2.3]{sundaram1990}. Now
$$
\sp_{\lambda}(x_1,\ldots,x_n) = \sum_{T} \prod_{i=1}^{n} x_i^{(\# \, i \in T) - (\# \, i' \in T)},
$$
where the sum is over all symplectic tableaux of shape $\lambda$ with entries in $\{1,1',2,2',\ldots,n,n'\}$.

\begin{theo}
\label{spq}
Assume $q=e^{\frac{i \pi}{6}}$.
We have the following formula for
$Z_{2n-1}(u_1,\ldots,u_{2n-1}; u_{2n})_{\qoosasm}$ in terms of symplectic characters
\begin{equation}
\label{sympodd}
3^{-(n-1)(2n-1)} \sp_{(n-1,n-1,n-2,n-2,\ldots,1,1)}(u_1^2,\ldots,u_{2n}^2)
\sp_{(n-1,n-2,n-2,n-3,n-3,\ldots,1,1)}(u_1^2,\ldots,u_{2n-1}^2)
\end{equation}
and for $Z_{2n}(u_1,\ldots,u_{2n}; u_{2n+1})_{\qoosasm}$
\begin{equation}
\label{sympeven}
3^{-n(2n-1)} \sp_{(n-1,n-1,n-2,n-2,\ldots,1,1)}(u_1^2,\ldots,u_{2n}^2)
\sp_{(n,n-1,n-1,n-2,n-2,\ldots,1,1)}(u_1^2,\ldots,u_{2n+1}^2).
\end{equation}
\end{theo}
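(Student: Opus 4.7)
\medskip

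\noindent\textbf{Proof proposal.} The plan is to combine the Pfaffian factorisation provided by Theorem~\ref{qoosasmpartfunct} with separate symplectic character evaluations of the two factors $P_m$ and $Q_m$ at $q=e^{i\pi/6}$. Writing
\begin{align*}
Z_{2n-1}(u_1,\ldots,u_{2n-1};u_{2n})_{\qoosasm} &= P_n(u_1,\ldots,u_{2n})\,Q_n(u_1,\ldots,u_{2n-1}),\\
Z_{2n}(u_1,\ldots,u_{2n};u_{2n+1})_{\qoosasm} &= P_n(u_1,\ldots,u_{2n})\,Q_{n+1}(u_1,\ldots,u_{2n+1}),
\end{align*}
the theorem will follow once we identify, at $q=e^{i\pi/6}$,
\begin{align*}
P_n(u_1,\ldots,u_{2n}) &= 3^{-a_n}\,\sp_{(n-1,n-1,n-2,n-2,\ldots,1,1)}(u_1^2,\ldots,u_{2n}^2),\\
Q_m(u_1,\ldots,u_{2m-1}) &= 3^{-b_m}\,\sp_{(m-1,m-2,m-2,m-3,m-3,\ldots,1,1)}(u_1^2,\ldots,u_{2m-1}^2),
\end{align*}
with $a_n+b_n=(n-1)(2n-1)$ and $a_n+b_{n+1}=n(2n-1)$.

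\smallskip

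The factor $P_n$ is, up to the chosen normalisation of the bulk weights, Kuperberg's $\operatorname{OSASM}$ partition function, as remarked immediately after Theorem~\ref{qoosasmpartfunct}. Its symplectic character evaluation at $q=e^{i\pi/6}$ is already given in the literature: I would cite \cite[Thm.~2.4]{OkadaCharacters} (equivalently \cite[Thm.~10]{KuperbergRoof}) to extract the required identity for $P_n$, with only a bookkeeping check that the prefactors $\sigma(q^4)^{-(m-1)2m}$ and the product over $\sigma(u_i\bar u_j)$ line up to produce the claimed power of $3$ (using $\sigma(q^4)^2=-3$ and $\sigma(q^2)^2=-3$ at $q=e^{i\pi/6}$).

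\smallskip

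The real work is the analogous statement for $Q_m$, which does not appear as a classical partition function and so has to be handled directly. I would square the Pfaffian in the definition of $Q_m$ via $(\pf M)^2=\det M$ applied to the antisymmetric extension of the triangular array, using the identity
\[
\frac{1}{\sigma(q^2u_iu_j)}+\frac{1}{\sigma(q^2\bar u_i\bar u_j)} \;=\; \frac{\sigma(q^2)\,(u_iu_j+\bar u_i\bar u_j)}{\sigma(q^2u_iu_j)\,\sigma(q^2\bar u_i\bar u_j)}
\]
to simplify the entries for $j<2m$. Specialising $q=e^{i\pi/6}$ converts $\sigma(q^2u_iu_j)\sigma(q^2\bar u_i\bar u_j)$ into $-(1+u_i^2u_j^2+\bar u_i^2\bar u_j^2)$, so every entry of the bordered matrix becomes a rational function in the squared variables $x_i=u_i^2$. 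The last row/column of $1$'s is handled by expanding the bordered Pfaffian along that row to reduce to an ordinary $(2m-1)\times(2m-1)$ determinant in the $x_i$. I would then manipulate this determinant (pulling out the common denominators row by row, and extracting the Weyl--type Vandermonde factor $\prod_{i<j}(x_i+\bar x_i-x_j-\bar x_j)\cdot\prod_i(x_i-\bar x_i)$) until it matches the Jacobi--Trudi style determinant on the right-hand side of \eqref{symp_det} for the partition $(m-1,m-2,m-2,\ldots,1,1)$. This is the analogue for $Q_m$ of the derivation Kuperberg and Okada carry out for $P_m$.

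\smallskip

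The main obstacle will be this last identification for $Q_m$: neither the Pfaffian identity nor the precise column operations are off-the-shelf in the literature, and writing the $(2m-1)\times(2m-1)$ determinant in the form $\det(x_i^{\lambda_j+2m-j}-\bar x_i^{\lambda_j+2m-j})$ requires the right sequence of row/column operations at $q=e^{i\pi/6}$ (an alternative would be to invoke Theorem~\ref{un} and check that the symplectic character expression satisfies the five conditions there, which trades an explicit determinant manipulation for the reduction identity \eqref{full}). Once this is done, collecting the prefactors in $P_n$ and $Q_m$ and combining the powers of $\sigma(q^4)=i\sqrt{3}$, $\sigma(q^2)=i\sqrt{3}$ and $\sigma(q^2)^2=-3$ yields the overall normalisations $3^{-(n-1)(2n-1)}$ and $3^{-n(2n-1)}$ asserted in \eqref{sympodd} and \eqref{sympeven}.
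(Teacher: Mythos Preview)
Your overall architecture is exactly the paper's: use Theorem~\ref{qoosasmpartfunct} to split $Z$ into $P\cdot Q$, pull the symplectic character identity for $P_n$ from Okada (the paper cites \cite[Theorem~2.5~(2)]{OkadaCharacters}, not Theorem~2.4, but that is cosmetic), and then attack $Q_m$ directly at $q=e^{i\pi/6}$. The normalisation bookkeeping you sketch is also what the paper does.

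Where you diverge is the mechanism for turning the bordered Pfaffian defining $Q_m$ into a single $(2m-1)\times(2m-1)$ determinant. Your proposal to ``square the Pfaffian via $(\pf M)^2=\det M$'' and then ``expand the bordered Pfaffian along that row to reduce to an ordinary $(2m-1)\times(2m-1)$ determinant'' conflates two incompatible moves. Squaring produces a $2m\times 2m$ determinant of a skew-symmetric matrix, and Laplace expansion along its last row yields an alternating sum of $(2m-1)\times(2m-1)$ minors, not a single one; moreover you would then only know $Q_m^2$, leaving a global sign. Expanding the \emph{Pfaffian} along the last row instead gives an alternating sum of $(2m-2)\times(2m-2)$ sub-Pfaffians, again not a single determinant in $2m-1$ variables. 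So as written this step does not close.

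The paper handles this with a dedicated Pfaffian identity (its Lemma~\ref{lem:OkadaType}): for arrays of the shape
\[
\underset{1\le i<j\le 2n}{\pf}\Bigl(\begin{cases}\dfrac{\mathcal{W}(x_i,x_j;a_i,a_j)(1+a_ia_j)}{1-x_ix_j},&j<2n\\1-a_i^2,&j=2n\end{cases}\Bigr)
=\dfrac{\mathcal{W}(x_1,\ldots,x_{2n-1};-a_1^2,\ldots,-a_{2n-1}^2)}{\prod_{1\le i<j\le 2n-1}(1-x_ix_j)},
\]
where $\mathcal{W}(x_1,\ldots,x_n;a_1,\ldots,a_n)=\det_{1\le i,j\le n}(x_i^{j-1}+a_i x_i^{n-j})$. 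This is proved by combining an identity of Okada \cite[Theorem~3.4,~Eq.~(20)]{OkadaCharacters} with a Pfaffian Sylvester-type condensation due to Knuth \cite[Eq.~(2.5)]{Knu96}. Specialising $x_i=u_i^6$, $a_i=-u_i^2$ at $q=e^{i\pi/6}$ collapses the bordered Pfaffian in $Q_m$ to exactly the determinant $\det(u_i^{6j-6}-u_i^{12n-4-6j})$, after which column reordering and sign-flipping matches the symplectic Weyl numerator~\eqref{symp_det} for the partition $(m-1,m-2,m-2,\ldots,1,1)$. This Pfaffian-to-determinant step is the missing ingredient in your sketch.

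Your fallback, verifying that the symplectic expression satisfies the characterisation in Theorem~\ref{un}, is viable and is in fact recorded by the paper as an alternative route in the Remark following the proof; it reduces to checking two branching-type identities for symplectic characters at $u_{n+1}=q^2\bar u_1$.
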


We use Theorem~\ref{qoosasmpartfunct} to prove the theorem. As $P_n(u_1,\ldots,u_{2n})$ is the partition function of $\operatorname{OSASM}$s, we can take its specialization
at $q=e^{\frac{i \pi}{6}}$ from \cite[Theorem~2.5 (2)]{OkadaCharacters} where it was shown that
$$
P_n(u_1,\ldots,u_{2n})|_{q=e^{\frac{i \pi}{6}}}= 3^{-(n-1)n} \sp_{(n-1,n-1,n-2,n-2,\ldots,1,1)}(u_1^2,\ldots,u_{2n}^2).
$$
See also \cite[Theorem~5]{RazStroVSASM}.
(In \cite[Theorem~2.5 (2)]{OkadaCharacters} appears also a formula for the specialization of $P_n(u_1,\ldots,u_{2n})$ at $q=e^{\frac{i \pi}{4}}$.)
We still need to show
\begin{equation}
\label{qooq}
Q_n(u_1,\ldots,u_{2n-1})|_{q=e^{\frac{i \pi}{6}}} = 3^{-(n-1)^2}
\sp_{(n-1,n-2,n-2,n-3,n-3,\ldots,1,1)}(u_1^2,\ldots,u_{2n-1}^2).
\end{equation}

The Pfaffian identity stated next is useful in the following.
\begin{lem}
\label{lem:OkadaType}
We define
$$\mathcal{W}(x_1,\ldots,x_n;a_1,\ldots,a_n)= \det_{1 \le i, j \le n} \left( x_i^{j-1} + a_i \, x_i^{n-j} \right).$$ Then
\begin{equation}
\label{OkadaType}
\underset{1\le i < j \le 2n}{\pf} \left( \Large \begin{cases} \frac{\mathcal{W}(x_i,x_j;a_i,a_j)(1+a_i a_j)}{1-x_i x_j}, &
j<2n \\ 1-a_i^2, & j=2n \end{cases} \normalsize \right) = \frac{\mathcal{W}(x_1,\ldots,x_{2n-1};
-a_1^{2},-a_{2}^2,\ldots,-a_{2n-1}^{2})}{\prod_{1 \le i < j \le 2n-1} (1-x_i x_j)}.
\end{equation}
\end{lem}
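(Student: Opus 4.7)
The plan is to prove this Pfaffian identity by induction on $n$, viewing both sides as rational functions in the $x_i$ and $a_i$. For the base case $n=1$, the left-hand side is $\pf\!\bigl(\begin{smallmatrix} 0 & 1-a_1^2 \\ -(1-a_1^2) & 0 \end{smallmatrix}\bigr) = 1-a_1^2$, and the right-hand side is $\mathcal{W}(x_1;-a_1^2) = x_1^0 + (-a_1^2)x_1^0 = 1-a_1^2$ (with the empty product in the denominator equal to $1$), so they agree.

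For the inductive step, I would expand the Pfaffian on the left along its last column, whose entries are $M_{i,2n}=1-a_i^2$. This gives
\[
\text{LHS} \;=\; \sum_{i=1}^{2n-1} (-1)^{i+1}(1-a_i^2)\,\pf\!\left(M^{(i)}\right),
\]
where $M^{(i)}$ is the $(2n-2)\times(2n-2)$ antisymmetric submatrix obtained by removing the rows and columns indexed by $i$ and $2n$. Since $M^{(i)}$ has exactly the same structure as the original matrix on the reduced variable set $\{(x_k,a_k):k\neq i\}$ (the last index of this smaller matrix still carries the entry $1-a_k^2$), the inductive hypothesis identifies $\pf(M^{(i)})$ with $\mathcal{W}(\ldots;-a_\cdot^2)/\prod_{k<\ell,\,k,\ell\neq i}(1-x_k x_\ell)$.

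After multiplying both sides by $\prod_{1\le k<\ell\le 2n-1}(1-x_kx_\ell)$ to clear denominators, the identity to verify becomes
\[
\sum_{i=1}^{2n-1}(-1)^{i+1}(1-a_i^2)\,\mathcal{W}\bigl(x_{\neq i};-a_{\neq i}^2\bigr)\prod_{\ell\neq i}(1-x_i x_\ell)\;=\;\mathcal{W}\bigl(x_1,\ldots,x_{2n-1};-a_1^2,\ldots,-a_{2n-1}^2\bigr).
\]
The right-hand side is a $(2n-1)\times(2n-1)$ determinant whose $i$-th row consists of the entries $x_i^{j-1}+(-a_i^2)x_i^{2n-1-j}$, and my plan is to recognize the left-hand sum as a Laplace expansion of this determinant along a distinguished row (or equivalently along an auxiliary ``phantom'' row), after using the identity
\[
\mathcal{W}(x_i,x_j;a_i,a_j)(1+a_ia_j)=\mathcal{W}(x_i,x_j;-a_i^2,-a_j^2)+(a_j-a_i)(1+a_i)(1+a_j)(1-x_ix_j)
\]
(easily verified by direct expansion). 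This identity is what makes the Pfaffian entries for $j<2n$ compatible with the $\mathcal{W}$-structure on the RHS and absorbs the factor $(1+a_ia_j)$. The correction term $(a_j-a_i)(1+a_i)(1+a_j)=u_iv_j-u_jv_i$ with $u_k=1+a_k$, $v_k=a_k(1+a_k)$ is rank-two antisymmetric, so it contributes only a bordered-Pfaffian adjustment that cancels against boundary terms.

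The main obstacle will be the bookkeeping of the factors $\prod_{\ell\neq i}(1-x_ix_\ell)$ produced by the Pfaffian cofactor expansion and matching them precisely with a determinantal Laplace expansion of $\mathcal{W}(x_1,\ldots,x_{2n-1};-a_1^2,\ldots,-a_{2n-1}^2)$. A safer finishing move, if the direct Laplace identification proves delicate, is to argue by rational-function uniqueness: both sides are (after clearing the denominator $\prod_{1\le i<j\le 2n-1}(1-x_ix_j)$) polynomials of bounded degree in each variable, they are antisymmetric under the simultaneous swap $(x_i,a_i)\leftrightarrow(x_j,a_j)$, and they share the vanishing behavior at $x_i=\bar x_j$ forced by the $1-x_ix_j$ factors; combining this with the inductive step and a finite number of boundary evaluations (e.g., $a_{2n-1}=\pm1$, $x_{2n-1}=\pm1$) pins them down uniquely.
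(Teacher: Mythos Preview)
Your induction breaks at the very first step. When you delete rows and columns $i$ and $2n$ from the $2n\times 2n$ skew-symmetric matrix, the remaining $(2n-2)\times(2n-2)$ matrix $M^{(i)}$ is indexed by $\{1,\ldots,2n-1\}\setminus\{i\}$, and \emph{every} entry of $M^{(i)}$ is of the first kind,
\[
M^{(i)}_{k,\ell}=\frac{\mathcal{W}(x_k,x_\ell;a_k,a_\ell)(1+a_ka_\ell)}{1-x_kx_\ell}.
\]
There is no surviving ``special'' column with entries $1-a_k^2$: that column was precisely the one you deleted. Hence $M^{(i)}$ does \emph{not} have the same structure as the matrix in the lemma, and the inductive hypothesis gives you no formula for $\pf(M^{(i)})$. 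Your two-term identity for $\mathcal{W}(x_i,x_j;a_i,a_j)(1+a_ia_j)$ is correct, but it does not manufacture the missing boundary column; the rank-two antisymmetric correction would only be useful if you already had the right Pfaffian to perturb. The ``rational-function uniqueness'' fallback is also not a proof as stated: you have not identified enough independent evaluations to pin down a polynomial of this size, and the claimed vanishing at $x_i=x_j^{-1}$ is not obvious for the left-hand side (it would require an argument, not an assumption).

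The paper's proof avoids this obstacle entirely. Instead of expanding along the last column, it applies a Pfaffian analogue of Sylvester's identity (a condensation on the last \emph{two} indices $2n-1$ and $2n$), which collapses the $2n\times 2n$ Pfaffian to a $(2n-2)\times(2n-2)$ Pfaffian whose $(i,j)$ entry is the $4\times 4$ sub-Pfaffian on rows $\{i,j,2n-1,2n\}$. A direct check shows this $4\times 4$ Pfaffian equals $\mathcal{W}(x_i,x_j,x_{2n-1};-a_i^2,-a_j^2,-a_{2n-1}^2)/\bigl((1-x_ix_j)(1-x_ix_{2n-1})(1-x_jx_{2n-1})\bigr)$, and the resulting $(2n-2)$-Pfaffian is then evaluated by a known identity of Okada. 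The point is that condensing on two indices (one ordinary, one special) produces entries that again involve $\mathcal{W}$ with the squared parameters $-a^2$, whereas expanding along a single column destroys the structure you need.
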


\begin{proof}
The following special case of an identity due to Okada
\cite[Theorem 3.4., Eq. (20)]{OkadaCharacters} is applied.
\begin{equation}
\label{OkadaId}
\underset{1\le i < j \le 2n-2}{\pf} \left(
\frac{ \mathcal{W}(x_i,x_j,x_{2n-1};a_i,a_j,a_{2n-1})}{1-x_i x_j} \right) =
\frac{(1+a_{2n-1})^{n-2}}{\prod_{1 \le i < j \le 2n-2} (1 - x_i x_j)} \mathcal{W}(x_1,\ldots,x_{2n-1};a_1,\ldots,a_{2n-1})
\end{equation}
(Substitute $n$ for $n-1$, and set $b_1=\ldots=b_{2n-2}=0$, $z=x_{2n-1}$, $c=a_{2n-1}$ in Okada's identity.)
After replacing $a_i$ by $-a_i^2$ in \eqref{OkadaId}, the right-hand side is---up to the factor $(1-a_{2n-1}^2)^{n-2} \prod_{i=1}^{2n-2} (1-x_i x_{2n-1})$---equal to the right-hand side in the statement. It suffices to show
\begin{multline}
\label{oka}
(1-a_{2n-1}^2)^{n-2} \prod_{i=1}^{2n-2} (1-x_i x_{2n-1}) \underset{1\le i < j \le 2n}{\pf} \left( \Large \begin{cases} \frac{\mathcal{W}(x_i,x_j;a_i,a_j)(1+a_i a_j)}{1-x_i x_j}, &
j<2n \\ 1-a_i^2, & j=2n \end{cases} \normalsize \right)  \\
= \underset{1\le i < j \le 2n-2}{\pf} \left(
\frac{ \mathcal{W}(x_i,x_j,x_{2n-1};-a_i^2,-a_j^2,-a_{2n-1}^2)}{1-x_i x_j} \right).
\end{multline}
The fact that elementary row and column operations do not change the determinant of a matrix has an analogue for Pfaffians: For instance, suppose $A$ is an even-order skew-symmetric matrix, and $A'$ is obtained from $A$ by adding a multiple of row $i$ to row $j$, and simultaneously adding the same multiple of column $i$ to column $j$, for some $i \not= j$. Then $\pf(A) = \pf(A')$. The identity \eqref{oka} can now be proven using such operations.

More specifically, we use the identity
\begin{equation}\label{PfId}
c_{2n-1,2n}^{n-2}\:\underset{1\le i<j\le2n}{\pf}(c_{ij})=\underset{1\le i<j\le2n-2}{\pf}\left(\pf\begin{pmatrix}c_{i,j}&c_{i,2n-1}&c_{i,2n}\\
&c_{j,2n-1}&c_{j,2n}\\
&&c_{2n-1,2n}\end{pmatrix}\right),\end{equation}
for any $(c_{i,j})_{1\le i<j\le 2n}$, which is  a special case of a Pfaffian analogue of Sylvester's determinant identity. (Replace $n$ by $n-1$, and set
$\beta=\{1,\ldots,2n-2\}$ and $\alpha=\{2n-1,2n\}$ in an identity of Knuth~\cite[Eq.~(2.5)]{Knu96}.) Alternatively,~\eqref{PfId} can be obtained directly by transforming the first $2n-2$ entries in the last column of $(c_{i,j})_{1\le i<j\le 2n}$ to zeros
using certain row and column operations, and then
expanding along the last column. Now let $(c_{i,j})_{1\le i<j\le 2n}$ be the array in the Pfaffian on the left-hand side of~\eqref{oka}.
It can be checked straightforwardly that
\begin{equation*}\pf\begin{pmatrix}c_{i,j}&c_{i,2n-1}&c_{i,2n}\\
&c_{j,2n-1}&c_{j,2n}\\&&c_{2n-1,2n}\end{pmatrix}=\frac{\mathcal{W}(x_i,x_j,x_{2n-1};-a_i^2,-a_j^2,-a_{2n-1}^2)}
{(1-x_i x_j)(1-x_ix_{2n-1})(1-x_jx_{2n-1})}.\end{equation*}
The result now follows from \eqref{PfId}.
\end{proof}

\begin{proof}[Proof of Theorem~\ref{spq}]
We prove \eqref{qooq}. Assume  $q=e^{\frac{i \pi}{6}}$. As
$$
\sigma(q^4)= \sqrt{3} i, \quad \sigma(q^2 x) \sigma(q^2 \bar x) = - (1+x^2+\bar x^2), \quad \sigma(q^2 x) + \sigma(q^2 \bar x) = \sqrt{3} i (x + \bar x),
$$
we have that $Q_n(u_1,\ldots,u_{2n-1})$ is equal to
$$
(-3)^{-(n-1)^2} \prod_{1 \le i < j \le 2n-1}
\frac{1 + u_i^2 u_j^2 + \bar u_i^2 \bar u_j^2}{u_i \bar u_j - \bar u_i u_j}
\underset{1 \le i < j \le 2n}{\pf}
\left( \Large \begin{cases} \frac{u_i^2 - \bar u_i^2 - u_j^2 + \bar u_j^2}{1 + u_i^2 u_j^2 + \bar u_i^2 \bar u_j^2},
& j < 2n \\ 1, & j=2n \end{cases} \normalsize \right).
$$
By setting $x_i=u_i^6$ and $a_i=-u_i^2$ in Lemma~\ref{lem:OkadaType}, we see that the left-hand side of \eqref{OkadaType} is, up to the factor $\prod_{i=1}^{2n-1} (1-u_i^4)$ (which arises from a factor $b_ib_j$ in each entry of the triangular array, where $b_i=1/(1-u_i^4)$ for $i<2n$ and $b_{2n}=1$), equal to the Pfaffian in the previous expression. This implies that the previous expression is equal to
\begin{multline*}
(-3)^{-(n-1)^2} \prod_{i=1}^{2n-1} \frac{1}{1-u_i^4} \prod_{1 \le i < j \le 2n-1}
\frac{1 + u_i^2 u_j^2 + \bar u_i^2 \bar u_j^2}{(u_i \bar u_j - \bar u_i u_j)(1-u_i^6 u_j^6)} \det_{1 \le i, j \le 2n-1} \left(u_i^{6j-6} - u_{i}^{12n-4-6j} \right) \\
= (-1)^n\,3^{-(n-1)^2}\prod_{i=1}^{2n-1} \frac{u_i^{6n-4}}{1-u_i^4} \prod_{1 \le i < j \le2n-1}
\frac{1 + u_i^2 u_j^2 + \bar u_i^2 \bar u_j^2}{(u_i \bar u_j - \bar u_i u_j)(1-u_i^6 u_j^6)} \\
\times 
\det_{1 \le i, j \le 2n-1} \left(u_i^{2(3n-3j+1)}-\bar u_i^{2(3n-3j+1)} \right).
\end{multline*}
The required right-hand side of~\eqref{qooq} can now be obtained from this expression by
multiplying columns $n+1,\ldots,2n-1$ of the matrix by $-1$,
reordering the columns as $1,2n-1,2,2n-2,\ldots,n-1,n+1,n$, simplifying the prefactor, and
applying the determinant formula~\eqref{symp_det} for symplectic characters
\end{proof}

\begin{rem}
A different approach to proving Theorem~\ref{spq} would be to show that the function in this theorem fulfills the properties from Theorem~\ref{un} in the special case $q=e^{\frac{i \pi}{6}}$. (The same idea would clearly also lead to direct proofs of Theorems~\ref{astschurtheo} and \ref{dastschurtheo}, where in this case we would have to show that the properties from Theorem~\ref{u1} are satisfied. This approach was previously used, see for instance \cite{StroNewWay}.) It turns out that it suffices to show
\begin{align*}
\frac{\sp_{(n-1,n-1,n-2,n-2,\ldots,1,1)}(u_1,u_2,\ldots,u_{2n-1},q^4 \bar u_1)}{\sp_{(n-2,n-2,\ldots,1,1)}(u_2,u_3,\ldots,u_{2n-1})} &= \prod_{i=2}^{2n-1} ({\bar q}^2 u_1 +  q^2 \bar u_1 + u_i + \bar u _i), \\
\frac{\sp_{(n,n-1,n-1,\ldots,1,1)}(u_1,u_2,\ldots,u_{2n}, q ^4 \bar u_1)}{\sp_{(n-1,n-2,n-2,\ldots,1,1)}(u_2,u_3,\ldots,u_{2n})} &= \prod_{i=2}^{2n} ({\bar q}^2 u_1 +  q^2 \bar u_1 + u_i + \bar u _i)
\end{align*}
at $q=e^{\frac{i \pi}{6}}$.
Since $\sp_{\lambda}(u_1,\ldots,u_n)$ is a certain generating function of symplectic tableaux, it would be interesting to explore whether these identities have a combinatorial proof.
\end{rem}

\subsection{Proof of Theorem~\ref{maxzero}} We set $u_i=1$ for $i=1,\ldots,n$ in \eqref{sympodd} and \eqref{sympeven} to compute the number of $\qoosasm$s. Here we need the following product formula, see \cite[Exercise 24.20]{fultonharris}.
$$
\sp_{\lambda}(1,1,\ldots,1) = \prod_{i=1}^{n} \frac{\lambda_i+n+1-i}{n+1-i}
\prod_{1 \le i < j \le n} \frac{(\lambda_i - \lambda_j + j - i)(\lambda_i + \lambda_j +2n+2-i-j)}{(j-i)(2n+2-i-j)}
$$
We obtain
$$
| \qoosasm(4n-1)| = \prod_{i=0}^{n-1} \frac{(3i+2)! (3n+3i)!}{(2n+i)! (3n+i)!},
$$
and this is also the number of $(4n+1) \times (4n+1)$ $\vhsasm$s. Finally,
$$
| \qoosasm(4n+1)| = \prod_{i=1}^{n} \frac{(3i-1)! (3n+3i)!}{(2n+i)! (3n+i+1)!},
$$
and this is also the number of $(4n+3) \times (4n+3)$ $\vhsasm$s, see \cite{OkadaCharacters}.

\section*{Acknowledgement}
We thank Matja\v{z} Konvalinka for very helpful discussions.

The authors acknowledge hospitality and support from the Galileo Galilei Institute, Florence, Italy, during the programme
``Statistical Mechanics, Integrability and Combinatorics'' held in May--June 2015. The first author (A.A.) is partially supported by UGC Centre for Advanced Studies and acknowledges support from DST grant DST/INT/SWD/VR/P-01/2014.
The third author (I.F.) acknowledges support from the Austrian Science Foundation FWF, START grant Y463.

\begin{appendix}

\section{$\asm$s and $\ast$s with a single $-1$}
\label{k1}

We count $n \times n$ $\asm$s with precisely one $-1$:
Such matrices are uniquely determined by the following information.
\begin{itemize}
\item The columns of the unique $-1$ and the two $1$'s that are situated in the same row. There are $\binom{n}{3}$ choices.
\item The rows of the $-1$ and the two $1$'s that are situated in the same column as the $-1$. There are $\binom{n}{3}$ choices.
\item The permutation matrix of order $n-3$ that is obtained by deleting the three chosen rows and the three chosen columns. There are $(n-3)!$ choices.
\end{itemize}
Therefore, there is a total of $\binom{n}{3}^2 (n-3)!$ $\asm$s of order $n$ with precisely one $-1$. We note that formulas for the numbers of $\asm$s with more $-1$'s are given in \cite{aval} and \cite{le-gac-2011}, and that $\asm$s with one $-1$ were studied by Lalonde~\cite{Lal02,Lal06}.

\begin{figure}
\scalebox{0.65}{
\psfrag{1}{\Huge $1$}
\psfrag{-1}{\Huge$-1$}
\psfrag{a}{\Large $-j_1$}
\psfrag{b}{\Large $j_2$}
\psfrag{c}{\Large $j_3$}
\psfrag{d}{\Large $i_1$}
\psfrag{e}{\Large $i_2$}
\includegraphics{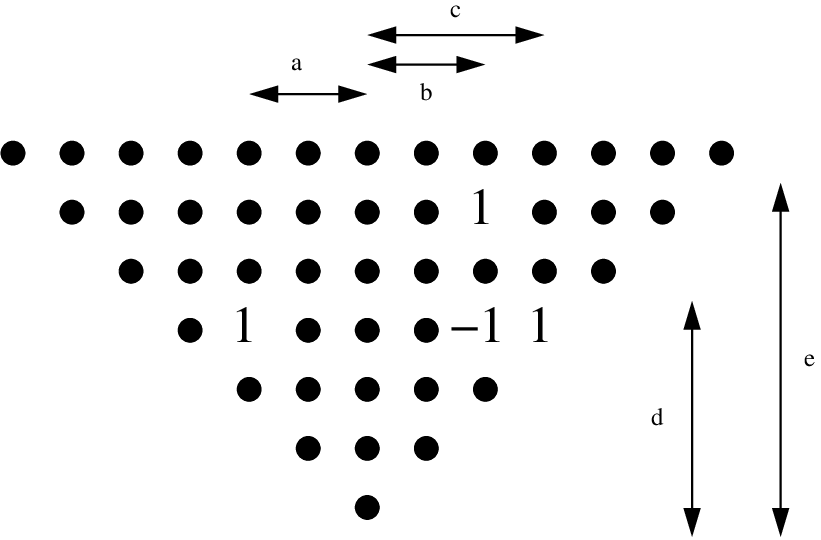}}
\caption{\label{onem1} Definition of $i_1,i_2,j_1,j_2,j_3$}
\end{figure}

On the other hand, $\ast$s with precisely one $-1$ are less accessible: Let $i_1 \in \{2,3,\ldots,n-1\}$ be the row of the
$-1$, counted from the bottom, and $j_2 \in \{-n+2,-n+3,\ldots,n-2\}$ its column, counted from the central column and where we use the negative sign if the column is left of the central column. Let $j_1,j_3, i_2$ encode the positions of the three $1$'s to the left, to the right and above the $-1$ as indicated in Figure~\ref{onem1}.
For integers $a,b$, we use the following notation.
$$
p(a,b) = \begin{cases} a(a+1) \cdots b & \text{if $a \le b$,} \\
                         1 & \text{otherwise.} \end{cases}
$$
We count all possible subarrays consisting of the $i_1-1$ bottom rows of such an $\ast$: Since there is neither a $1$ in column $j_1$ nor in column $j_3$, the number is
\begin{equation}
\label{total}
p(1,\min(|j_1|,|j_3|)) p(\min(|j_1|,|j_3|),\max(|j_1|,|j_3|)-1) p(\max(|j_1|,|j_3|)-1,i_1-3)= F_1(j_1,j_3,i_1).
\end{equation}
(The argument is analogous to that used for counting $\ast$s without $-1$'s.)
If we impose the condition that there is also no $1$ in column $j_2$ in this subarray then this number is
\begin{equation}
\label{no}
p(1,\min) p(\min,\mathrm{mid}-1) p(\mathrm{mid}-1,\max-2) p(\max-2,i_1-4)= F_2(j_1,j_2,j_3,i_1),
\end{equation}
where  $\min=\min(|j_1|,|j_2|,|j_3|)$, $\max=\max(|j_1|,|j_3|)$,
$\mathrm{mid}=|j_1|+|j_2|+|j_3|-\min-\max$. There are
$p(i_1-1,i_2-3) p(i_2,n-1)$
possibilities for extending the latter type of subarray to a permissible $\ast$ of order $n$. Combining \eqref{total}
and \eqref{no}, it is clear that there are
$$
F_1(j_1,j_3,i_1) - F_2(j_1,j_2,j_3,i_1)
$$
possibilities for the bottom $i_1-1$ rows that have a $1$ in column $j_2$. Here, there are $p(i_1,i_2-2) p(i_2+1,n)$ ways to extend this to a permissible $\ast$ of order $n$. In total, there are
\begin{align*}
\sum_{-n+1 < j_1 < j_2 < j_3 < n-1} \sum_{\max(|j_1|,|j_3|)+1 \le i_1 < i_2 \le n}
& \big[ F_2(j_1,j_2,j_3,i_1) p(i_1-1,i_2-3) p(i_2,n-1)  \\
&  + \left( F_1(j_1,j_3,i_1) - F_2(j_1,j_2,j_3,i_1) \right) p(i_1,i_2-2) p(i_2+1,n) \big]
\end{align*}
configurations. It is tedious but straightforward to show that this is indeed equal
to $\binom{n}{3}^2 (n-3)!$.

\end{appendix}

\bibliographystyle{alpha}

\end{document}